\newtheorem{theorem}{Theorem}[section]
\newtheorem{prop}[theorem]{Proposition}
\newtheorem{lemma}[theorem]{Lemma}
\newtheorem*{problem*}{Problem}
\newtheorem{hypo}[theorem]{Main hypotheses}
\newtheorem*{hypo*}{Main hypotheses}
\newtheorem*{conjecture*}{Conjecture}
\newtheorem*{moran*}{Moran's Theorem (1946)}
\newtheorem*{matti*}{Mattila's Counterexample}
\newtheorem*{schief*}{Schief's Counterexample}
\theoremstyle{definition}
\newtheorem{definition}[theorem]{Definition}
\newtheorem{example}[theorem]{Example}
\theoremstyle{remark}
\newtheorem{remark}[theorem]{Remark}
\numberwithin{equation}{section}
\newcommand{\R}{\mathbb R}
\newcommand{\Z}{\mathbb Z}
\newcommand{\N}{\mathbb N}
\def \inf{{\rm inf}}
\def \dim{{\rm dim \,}}
\def \dm{{\rm diam \,}}
\def \cd{{\rm Card \,}}
\newcommand \F{\mathcal{F}}
\newcommand \St{{\rm St}}
\newcommand \ef{\mathbf{\Gamma}}
\newcommand \gf{\mathbf{\Delta}}
\newcommand \uno{\dim_{\ef}^{1}}
\newcommand \dos{\dim_{\ef}^{2}}
\newcommand \tres{\dim_{\ef}^{3}}
\newcommand \cuatro{\dim_{\ef}^{4}}
\newcommand \cinco{\dim_{\ef}^{5}}
\newcommand \h{\dim_{\rm H}}
\newcommand \bc{\dim_{\rm B}}
\newcommand \eps{\varepsilon}
\newcommand \anf{\mathcal{A}_n(F)}
\newcommand \hks{\mathcal{H}_{k}^s(F)}
\let\emptyset\varnothing
\begin{document}

\title[Calculating Hausdorff dimension in higher dimensional GF-spaces]
{Calculating Hausdorff dimension in higher dimensional spaces}

\author[M.A. S\'anchez-Granero and M. Fern\'andez-Mart\'{\i}nez]{M.A. S\'anchez-Granero$^1$ and M. Fern\'andez-Mart\'{\i}nez$^{2}$}

\address{$^{1}$ Departamento de Matem\'aticas, Universidad de Almer\'{\i}a, 04120 La Ca\~nada de San Urbano, Almer\'{\i}a (SPAIN)}

\email{misanche@ual.es}

\address{$^{2}$ University Centre of Defence at the Spanish Air Force Academy, MDE-UPCT, 30720 Santiago de la Ribera, Murcia (SPAIN)}
\email{fmm124@gmail.com}

\thanks{2010 Mathematics Subject Classification: 28A80
\newline \indent 
Both authors are partially supported by grant No.~MTM2015-64373-P (MINE\-CO/FEDER, UE).
The first author also acknowledges the support of CDTIME and the second author also acknowledges the partial support of 
grant No.~19219/PI/14 from Fundaci\'on S\'eneca of Regi\'on de Murcia.
}

\keywords{Hausdorff dimension; fractal structure; space-filling curve}

\begin{abstract}
In this paper, we prove the identity $\h(F)=d\cdot \h(\alpha^{-1}(F))$, where $\h$ denotes Hausdorff dimension, $F\subseteq \R^d$, and $\alpha:[0,1]\to [0,1]^d$ is a function whose constructive definition is addressed from the viewpoint of the powerful concept of a fractal structure. Such a result stands particularly from some other results stated in a more general setting. Thus, Hausdorff dimension of higher dimensional subsets can be calculated from Hausdorff dimension of $1-$dimensional subsets of $[0,1]$. 
As a consequence, Hausdorff dimension becomes available to deal with the effective calculation of the fractal dimension in applications by applying a procedure contributed by the authors in previous works.   
It is also worth pointing out that our results generalize both Skubalska-Rafaj{\l}owicz and Garc{\'{i}}a-Mora-Redtwitz theorems.
\end{abstract}

\maketitle

\section{Introduction}

In the mathematical literature there can be found (at least) a pair of theoretical results allowing the calculation of the box dimension of Euclidean objects in $\R^d$ in terms of the box dimension of $1-$dimensional Euclidean subsets. To attain such results, the concept of a \emph{space-filling curve} plays a key role. By a space-filling curve we shall understand a continuous map $\F$ from $I_1=[0,1]$ \emph{onto} the $d-$dimensional unit cube, $I_d=[0,1]^d$. It turns out that a one-to-one correspondence can be stated among closed real subintervals of the form $[k\, \delta^{nd},(1+k)\, \delta^{nd}]$ for $k=0,1,\ldots,\delta^{-nd}-1$, and sub-cubes $\F([k\, \delta^{nd},(1+k)\, \delta^{nd}])$ with lengths equal to $\delta^{n}$, where $\delta$ is a value depending on each space-filling curve. For instance, $\delta=\tfrac 12$ in both Hilbert's and Sierpi\'nski's square-filling curves, and $\delta=\tfrac 13$ in the case of the Peano's filling curve. It is worth pointing out that space-filling curves satisfy the two following properties.
\begin{remark}
Let $\F:I_1\to I_d$ be a space-filling curve. The two following hold.
\begin{enumerate}[(i)]
\item $\F$ is continuous and lies under the H\"{o}lder condition, i.e., $\|\F(x)-\F(y)\|\leq \kappa_d\cdot |x-y|^{\frac 1d}$ for all $x,y\in I_1$, where $\|\cdot\|$ denotes the Euclidean norm (induced in $I_d$), and $\kappa_d>0$ is a constant which depends on $d$.\label{prop:measure-preserving}
\item $\F$ is Lebesgue measure preserving, namely, $\mu_d(B)=\mu_1(\F^{-1}(B))$ for each Borel subset $B$ of $I_d$, where $\mu_d$ denotes the Lebesgue measure in $I_d$ and $\F^{-1}(B)=\{t\in I_1:\F(t)\in B\}$.\label{prop:Lebesgue-measure-preserving}
\end{enumerate}
\end{remark}
As it was stated in \cite[Subsection 3.1]{SkubalskaRafajowicz2005}, many space-filling curves satisfy (\ref{prop:measure-preserving}). On the other hand, despite $F$ cannot be invertible, it can be still proved that $F$ is a.e.~one-to-one (c.f.~\cite{SkubalskaRafajowicz2001,Milne1980}).
Following the above, Skubalska-Rafaj{\l}owicz stated the following result in 2005.
\begin{theorem}[c.f.~Theorem 1 in \cite{SkubalskaRafajowicz2005}]\label{teo:Skubalska-Rafajowicz2005}
Let $F$ be a subset of $I_d$ and assume that there exists $\bc(F)$. Then $\bc(\Psi(F))$ also exists and it holds that
\[
\bc(F)=d\cdot \bc(\Psi(F)),
\]
where $\Psi:I_d\to I_1$ is a quasi-inverse (in fact, a right inverse) of $\F$, namely, it satisfies that $\Psi(x)\in \F^{-1}(x)$, i.e., $\F(\Psi(x))=x$ for all $x\in I_d$.
\end{theorem}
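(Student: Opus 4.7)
\medskip

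The plan is to compare the box-counting functions of $F$ and $\Psi(F)$ through the natural scale-$n$ correspondence provided by $\F$: the intervals $I_k^n=[k\delta^{nd},(k+1)\delta^{nd}]$ ($k=0,\ldots,\delta^{-nd}-1$) partition $I_1$, and their images $C_k^n=\F(I_k^n)$ tile $I_d$ with sub-cubes of side $\delta^n$. Define the grid cover counts
\[
a_n=\#\{k:C_k^n\cap F\neq\emptyset\},\qquad b_n=\#\{k:I_k^n\cap\Psi(F)\neq\emptyset\}.
\]
Standard equivalence between grid covers and $\delta^n$-covers (resp.~$\delta^{nd}$-covers) ensures that $a_n$ and $b_n$ determine $\bc(F)$ and $\bc(\Psi(F))$ respectively, up to multiplicative constants depending only on $d$ that disappear under the logarithm. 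The strategy is therefore to show that $a_n$ and $b_n$ are themselves comparable up to such constants.

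First I would establish $b_n\leq a_n$. If $t\in I_k^n\cap\Psi(F)$ with $t=\Psi(x)$ for some $x\in F$, the right-inverse identity $\F\circ\Psi=\id$ gives $x=\F(t)\in\F(I_k^n)=C_k^n$, so $k$ is also counted by $a_n$. For the reverse inequality, for each $k$ counted by $a_n$ I pick $x_k\in F\cap C_k^n$; then $\Psi(x_k)$ lies in some (uniquely determined) $I_{k'}^n$, and $x_k=\F(\Psi(x_k))\in C_{k'}^n$ forces $x_k\in C_k^n\cap C_{k'}^n$. Since any sub-cube of the scale-$n$ grid meets at most $3^d$ others (itself included), the assignment $k\mapsto k'$ has fibers of size at most $3^d$, so $a_n\leq 3^d\,b_n$.

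Taking logarithms and dividing by $-\log\delta^{nd}=-d\log\delta^n$ then yields
\[
\frac{\log b_n}{-\log\delta^{nd}}=\frac{1}{d}\cdot\frac{\log a_n+O(1)}{-\log\delta^n}\longrightarrow\frac{1}{d}\,\bc(F)
\]
as $n\to\infty$, the limit existing by the hypothesis on $F$. A standard sandwich between consecutive terms of the geometric sequence $\delta^{nd}$ transfers existence from this subsequence to all scales, giving both the existence of $\bc(\Psi(F))$ and the announced identity $\bc(F)=d\cdot\bc(\Psi(F))$.

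The principal obstacle is the reverse covering inequality: because $\F$ is not injective, there is no reason a priori for $\Psi(x_k)$ to remain in $I_k^n$, and no analytic regularity of $\Psi$ is available to constrain it. The resolution must therefore be purely combinatorial, relying on the bounded adjacency of grid sub-cubes at any given scale together with the defining identity $\F(\Psi(x))=x$; once this is in hand, the rest of the argument is essentially a change of variables in the box-counting limit.
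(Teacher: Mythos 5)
Your argument is correct. Note first that the paper itself does not prove this statement: it is imported verbatim (with citation) from Skubalska-Rafaj{\l}owicz, and the paper's own contribution is a different, more general theorem (Theorems \ref{teo:bc-to-R} and \ref{teo:eu-bc-to-R}) proved by other means. So the comparison is necessarily with that surrounding machinery rather than with a proof of this exact statement. The paper's route replaces $\Psi(F)$ by the full preimage $\alpha^{-1}(F)$ and works level-by-level with the fractal structures $\ef$ and $\gf=\alpha(\ef)$: Lemma \ref{lema:1} gives $A\cap\alpha^{-1}(F)\neq\emptyset\Leftrightarrow\alpha(A)\cap F\neq\emptyset$, so Proposition \ref{prop:num1} yields an \emph{exact} equality of covering counts, and the factor $d$ then enters through the diameter relation of Main hypotheses \ref{hypo:main} together with the $\kappa$-condition (Theorem \ref{teo:k-cond->bc=dos}). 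Your proof instead confronts the genuine extra difficulty of the quasi-inverse formulation, namely that $\Psi(F)$ is only a selection from $\F^{-1}(F)$, so the counts need not match exactly; you correctly reduce the discrepancy to the bounded-adjacency bound $a_n\leq 3^d\,b_n$ via the identity $\F(\Psi(x))=x$, and the easy inclusion gives $b_n\leq a_n$. Both halves are sound (the fibers of $k\mapsto k'$ are contained in the set of grid cubes meeting $C_{k'}^n$, of cardinality at most $3^d$ regardless of the choices of $x_k$), and the transfer of existence of the limit from the geometric sequence $\delta^{nd}$ to all scales is the standard Falconer argument you invoke. What your approach buys is a short, self-contained, purely combinatorial proof of the quoted theorem as stated; what the paper's approach buys is exactness of the counting identity (no $3^d$ slack), independence from any selection $\Psi$, and the generality needed to extend the statement beyond box dimension to Hausdorff-type dimensions, which is the point of the article.
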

The applicability of Theorem \ref{teo:Skubalska-Rafajowicz2005} for fractal dimension calculation purposes depends on a constructive method to properly define that quasi-inverse $\Psi$. In other words, for each $x\in I_d$, it has to be (explicitly) specified how to select a pre-image of $x$. Interestingly, for some Lebesgue measure preserving space-filling curves (including the Hilbert's, the Peano's, and the Sierpi\'nski's ones), it holds that $\F^{-1}(\{x\})$ is either a single point or a finite real subset. As such, suitable definitions of $\Psi$ can be provided in these cases. It is worth noting that whether both properties (\ref{prop:measure-preserving}) and (\ref{prop:Lebesgue-measure-preserving}) stand, then the quasi-inverse $\Psi$ becomes Lebesgue measure preserving, i.e., $\mu_d(\Psi^{-1}(A\cap \Psi(I_d)))=\mu_1(A\cap \Psi(I_d))$ for each Borel subset $A$ of $I_1$. Moreover, since the (Lebesgue) measure of $\Psi(B)\setminus F^{-1}(B)$ is equal to zero, then we have $\mu_1(\Psi(B))=\mu_1(\F^{-1}(B))=\mu_d(B)$ for all Borel subset $B$ of $I_d$. Therefore, if $\dm(\Psi(B))=\delta$, then $\dm(B)\leq \delta^{\frac 1d}$.

On the other hand, Garc\'{\i}a et al.~recently contributed a theoretical result also allowing the calculation of the box dimension of $d-$dimensional Euclidean subsets in terms of an asymptotic expression involving certain quantities to be calculated from $1-$dimensional subsets. To tackle with, they used the concept of a \emph{$\delta-$uniform curve}, that may be defined as follows. Let $\delta>0$. We recall that a $\delta-$cube in $\R^d$ is a set of the form $[k_1\, \delta,(1+k_1)\, \delta]\times \cdots\times [k_d\, \delta,(1+k_d)\, \delta]$, where $k_1,\ldots,k_d\in \Z$. Let $\mathcal{M}_{\delta}(I_d)$ denote the class of all $\delta-$cubes in~$I_d$. Thus, if $N\geq 1$ and $\delta=\frac 1N$, we shall understand that $\gamma:I_1\to I_d$ is a $\delta-$uniform curve in $I_d$ if there exist a $\delta-$cube in $I_d$, a $\delta^d-$cube in $I_1$, and a one-to-one correspondence, $\phi:\mathcal{M}_{\delta^d}(I_1)\to \mathcal{M}_{\delta}(I_d)$, such that $\gamma(J)\subset \phi(J)$ for all $J\in \mathcal{M}_{\delta^d}(I_1)$ (c.f.~\cite[Definition 3.1]{GARCIA2017}).
Moreover, let $s>0$, $F$ be a subset of $I_d$, and $\mathcal{N}_{\delta}(F)$ be the number of $\delta-$cubes in $I_d$ that intersect $F$. The $s-$body of $F$ is defined as $F_s=\{x\in I_d:\|x-y\|\leq s \text{ for some } y\in F\}$. Following the above, their main result is stated next.
\begin{theorem}[c.f.~Theorem 4.1 in \cite{GARCIA2017}]\label{teo:GARCIA2017}
Let $N>1$, $\delta=\frac 1N$, and $\gamma_{\delta}:I_1\to I_d$ be an injective $\delta-$uniform curve in $I_d$. Moreover, let $F$ be a (nonempty) subset of $I_d$, and $F_s$ its $s-$body, where $s=\delta\, \sqrt{d}$. Then the (lower/upper) box dimension of $F$ can be calculated throughout the next (lower/upper) limit:
\[
\bc(F)=\lim_{\delta\to 0}\frac{\log \mathcal{N}_{\delta^d}(\gamma_{\delta}^{-1}(F_s))}{-\log \delta}.
\]
\end{theorem}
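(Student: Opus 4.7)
The plan is to sandwich $\mathcal{N}_{\delta^d}(\gamma_\delta^{-1}(F_s))$ between $\mathcal{N}_\delta(F)$ and a constant multiple of $\mathcal{N}_\delta(F)$, so that taking logarithms and dividing by $-\log\delta$ forces the desired identity. The whole argument pivots on the bijection $\phi:\mathcal{M}_{\delta^d}(I_1)\to\mathcal{M}_\delta(I_d)$ supplied by the $\delta$-uniform property, coupled with the localizing inclusion $\gamma_\delta(J)\subset\phi(J)$ and the fact that each $\delta$-cube has diameter exactly $s=\delta\sqrt{d}$.

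For the lower bound I would take any $\delta$-cube $C\in\mathcal{M}_\delta(I_d)$ meeting $F$. The diameter identity gives $C\subset F_s$ immediately, and setting $J=\phi^{-1}(C)$ yields $\gamma_\delta(J)\subset C\subset F_s$, hence $J\subset\gamma_\delta^{-1}(F_s)$, so $J$ is counted on the right. Because $\phi$ is injective, distinct $C$'s meeting $F$ produce distinct $J$'s, giving $\mathcal{N}_\delta(F)\leq\mathcal{N}_{\delta^d}(\gamma_\delta^{-1}(F_s))$.

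For the upper bound I would reverse the correspondence: if $J\in\mathcal{M}_{\delta^d}(I_1)$ meets $\gamma_\delta^{-1}(F_s)$, then $\gamma_\delta(J)$ meets $F_s$, and since $\gamma_\delta(J)\subset\phi(J)$ the cube $\phi(J)\in\mathcal{M}_\delta(I_d)$ must meet $F_s$ as well; this yields $\mathcal{N}_{\delta^d}(\gamma_\delta^{-1}(F_s))\leq\mathcal{N}_\delta(F_s)$. I would then replace $F_s$ by $F$ via a standard neighbor-counting estimate: every $\delta$-cube meeting $F_s$ lies within Euclidean distance $2\delta\sqrt{d}$ (center to center) of some $\delta$-cube meeting $F$, and the number of $\delta$-cubes inside such a fixed neighborhood depends only on the ambient dimension. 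This yields $\mathcal{N}_\delta(F_s)\leq c_d\,\mathcal{N}_\delta(F)$ for a constant $c_d=c(d)$.

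Chaining the three inequalities, taking logarithms, dividing by $-\log\delta$, and letting $\delta\to 0$ collapses both flanking quantities to $\bc(F)$ (because $\log c_d/\log\delta\to 0$), and the same sandwich works verbatim with $\liminf$ and $\limsup$ in place of the limit. The principal obstacle is the neighbor-counting estimate at the end of the upper bound, since that is the sole point where the geometry of $I_d$ (rather than the pure combinatorics of $\phi$) enters, and one must argue carefully that passing from $F$ to $F_s$ inflates the grid covering number by no more than a dimension-dependent multiplicative factor.
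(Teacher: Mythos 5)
Your proposal is correct and follows essentially the same route as the source: your two-sided estimate $\mathcal{N}_{\delta}(F)\leq \mathcal{N}_{\delta^d}(\gamma_{\delta}^{-1}(F_s))\leq c_d\,\mathcal{N}_{\delta}(F)$ is exactly the content of Proposition~\ref{lema:GARCIA2017}(2) (the $\log \mathcal{N}_{\delta^d}(\gamma_{\delta}^{-1}(F_s))=\log \mathcal{N}_{\delta}(F)+O(1)$ statement quoted from Garc\'{\i}a et al.), from which the theorem follows by dividing by $-\log\delta$. The neighbor-counting step you flag as the main obstacle is indeed the standard bounded-overlap argument and goes through as you describe.
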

It is worth mentioning that Theorem~\ref{teo:GARCIA2017} is supported by the existence of injective $\delta-$uniform curves in $I_d$ as the result below guarantees.
\begin{prop}[c.f.~Lemma 3.1 and Corollary 3.1~in \cite{GARCIA2017}]\label{lema:GARCIA2017}
Under the same hypotheses as in Theorem~\ref{teo:GARCIA2017}, the two following stand.
\begin{enumerate}
\item There exists an injective $\delta-$uniform curve in $I_d$, $\gamma_{\delta}:I_1\to I_d$.
\item $\log \mathcal{N}_{\delta^d}(\gamma_{\delta}^{-1}(F_s))=\log \mathcal{N}_{\delta}(F)+O(1)$.
\end{enumerate}
\end{prop}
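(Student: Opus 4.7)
The plan is to prove the two items in turn: item (1) by an explicit Hamiltonian-style construction of $\gamma_{\delta}$, and item (2) by a two-sided counting estimate linking $\mathcal{N}_{\delta^d}(\gamma_{\delta}^{-1}(F_s))$ with $\mathcal{N}_{\delta}(F)$.

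For item (1), first observe that $\mathcal{M}_{\delta}(I_d)$ consists of exactly $N^d$ cubes, indexed by the grid $\{0,\ldots,N-1\}^d$, while $\mathcal{M}_{\delta^d}(I_1)$ consists of exactly $N^d$ closed intervals $J_k=[(k-1)\delta^d,k\delta^d]$, $k=1,\ldots,N^d$. The strategy is to fix a Hamiltonian path in the grid graph on $\{0,\ldots,N-1\}^d$, for instance the boustrophedon (snake) ordering, whose existence follows by a straightforward induction on $d$. Writing $C_1,\ldots,C_{N^d}$ for the corresponding sequence of $\delta$-cubes, consecutive cubes $C_k,C_{k+1}$ share a $(d-1)$-dimensional face. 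Set $\phi(J_k)=C_k$; this yields the required bijection. Finally, define $\gamma_{\delta}$ on each $J_k$ as a simple polygonal arc contained in $C_k$, passing through the midpoint of $C_k\cap C_{k-1}$ at the left endpoint of $J_k$ and through the midpoint of $C_k\cap C_{k+1}$ at the right endpoint (with obvious adjustments when $k=1$ or $k=N^d$, where the free endpoint may be placed anywhere in the interior of the corresponding cube). By construction $\gamma_{\delta}(J_k)\subset C_k=\phi(J_k)$, and injectivity is automatic: pieces in distinct cubes can only meet along shared faces, where they agree only at the prescribed midpoint, and the interior arcs may be perturbed to avoid any accidental coincidences.

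For item (2), I would prove the sandwich $\mathcal{N}_{\delta}(F)\le \mathcal{N}_{\delta^d}(\gamma_{\delta}^{-1}(F_s))\le c_d\cdot\mathcal{N}_{\delta}(F)$ for some constant $c_d$ depending only on $d$. For the lower bound: if $\phi(J)\cap F\neq\emptyset$, then since $\dm(\phi(J))=\delta\sqrt{d}=s$, the whole cube $\phi(J)$ lies in $F_s$, so $\gamma_{\delta}(J)\subset\phi(J)\subset F_s$ and in fact $J\subset\gamma_{\delta}^{-1}(F_s)$. For the upper bound: if $J$ meets $\gamma_{\delta}^{-1}(F_s)$, then $\gamma_{\delta}(J)\cap F_s\neq\emptyset$, and because $\gamma_{\delta}(J)\subset\phi(J)$ the $\delta$-cube $\phi(J)$ lies within distance $s$ of $F$; since each $\delta$-cube has at most $3^d$ grid-neighbours inside its own $s$-neighbourhood, the bound follows at once. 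Taking logarithms then produces the announced $O(1)$ identity.

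The chief difficulty lies in item (1): one must simultaneously produce the bijection $\phi$ and an injective $\gamma_{\delta}$ compatible with it. The decisive observation is that, because the required condition is the inclusion $\gamma_{\delta}(J)\subset\phi(J)$ rather than equality, one is free to fix $\phi$ first (via the Hamiltonian path in the grid) and only afterwards build $\gamma_{\delta}$ piecewise inside each $C_k$, with ample room to arrange injectivity. Granted this, item (2) reduces to the standard packing remark that the $s$-thickening with $s$ equal to the diameter of a $\delta$-cube changes the $\delta$-cube count only by a dimensional constant factor.
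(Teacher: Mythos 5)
The paper does not prove this proposition at all: it is quoted verbatim from Garc\'{\i}a--Mora--Redtwitz (Lemma 3.1 and Corollary 3.1 of \cite{GARCIA2017}) as background in the introduction, so there is no internal proof to compare yours against. Judged on its own, your reconstruction is essentially correct and follows the standard route. Item (1) via a boustrophedon Hamiltonian path in the grid $\{0,\ldots,N-1\}^d$, with $\phi(J_k)=C_k$ and a simple arc inside each $C_k$ joining the midpoints of the entry and exit faces, does produce an injective $\delta$-uniform curve: the transition midpoints lie in the relative interiors of $(d-1)$-faces, hence belong to exactly two cubes, so distinct pieces meet only at the prescribed common endpoint. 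Item (2) is the right sandwich: the lower bound is exact because $s=\delta\sqrt{d}=\dm(\phi(J))$ forces $\phi(J)\subseteq F_s$ whenever $\phi(J)\cap F\neq\emptyset$, and the upper bound is a packing count. One small numerical slip there: a $\delta$-cube within distance $s=\delta\sqrt{d}$ of $F$ can be up to $\lceil\sqrt{d}\rceil\geq 2$ grid steps away (for $d\geq 2$) from a cube meeting $F$, so the correct constant is $(2\lceil\sqrt{d}\rceil+1)^d$ rather than $3^d$; this is still a constant depending only on $d$, so the $O(1)$ conclusion is unaffected.
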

From a novel viewpoint, along this article, we shall apply the powerful concept of a \emph{fractal structure} in order to extend both Theorems \ref{teo:Skubalska-Rafajowicz2005} and \ref{teo:GARCIA2017} to the case of Hausdorff dimension. Roughly speaking, a fractal structure is a countable family of coverings which throws more accurate approximations to the irregular nature of a given set as deeper stages within its structure are explored (c.f.~Subsection~\ref{sub:fs} for a rigorous description). In this paper, we shall contribute the following result in the Euclidean setting.
\begin{theorem}\label{teo:eu-dim-to-R-intro}
There exists a curve $\alpha:I_1\to I_d$ such that for each subset $F$ of $\R^d$, the two following hold:
\begin{enumerate}[(i)]
\item If there exists $\bc(F)$, then $\bc(\alpha^{-1}(F))$ also exists, and $\bc(F)=d\cdot \bc(\alpha^{-1}(F))$.
\item $\h(F)=d\cdot \h(\alpha^{-1}(F))$.
\end{enumerate}
\end{theorem}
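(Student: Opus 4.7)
The plan is to construct $\alpha$ directly by matching natural $N$-adic fractal structures on $I_1$ and $I_d$, and then read off both (i) and (ii) from two geometric properties of this construction. Fix an integer $N\geq 2$ and, for each $n$, consider the natural level-$n$ coverings $\Gamma_n^{(1)}$ of $I_1$ by closed intervals of length $N^{-nd}$ and $\Gamma_n^{(d)}$ of $I_d$ by closed cubes of side $N^{-n}$; both families have cardinality $N^{nd}$ and refine naturally as $n$ grows. I would inductively define bijections $\phi_n:\Gamma_n^{(1)}\to\Gamma_n^{(d)}$ in the Hilbert-curve spirit so that (a) $\phi_{n+1}$ refines $\phi_n$, i.e.\ if $J\subseteq J'$ at consecutive levels then $\phi_{n+1}(J)\subseteq\phi_n(J')$, and (b) consecutive intervals are sent to face-adjacent cubes. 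Setting $\alpha(x)$ to be the unique point in $\bigcap_n\phi_n(J_n(x))$, where $J_n(x)\in\Gamma_n^{(1)}$ contains $x$, produces a well-defined continuous surjection $\alpha:I_1\to I_d$.

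From the construction two key facts drop out. The \emph{H\"{o}lder condition}: if $|x-y|\leq N^{-nd}$, then $x,y$ belong to at most two consecutive elements of $\Gamma_n^{(1)}$ which are mapped to face-adjacent cubes of side $N^{-n}$, so $\|\alpha(x)-\alpha(y)\|\leq\kappa_d\cdot N^{-n}\leq \kappa_d\cdot|x-y|^{1/d}$ for a constant $\kappa_d$ depending only on $d$. The \emph{pre-image condition}: for every $C\in\Gamma_n^{(d)}$, the set $\alpha^{-1}(C)$ is a union of at most $K_d$ elements of $\Gamma_n^{(1)}$, with $K_d$ independent of $n$.

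Item (i) then reduces to the box-counting estimate $\mathcal{N}_{\delta^d}(\alpha^{-1}(F_s))=\mathcal{N}_\delta(F)+O(1)$ established just as in Proposition~\ref{lema:GARCIA2017}, but now expressed in the $N$-adic fractal-structure language; dividing by $-\log\delta$ and letting $\delta\to 0$ yields $\bc(F)=d\cdot\bc(\alpha^{-1}(F))$, and the same bookkeeping handles the lower and upper limits separately.

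For item (ii), I would prove the two inequalities separately. For the upper bound $\h(F)\leq d\cdot\h(\alpha^{-1}(F))$, the H\"{o}lder condition gives the standard inequality $\mathcal{H}^{ds}(\alpha(A))\leq\kappa_d^{ds}\,\mathcal{H}^s(A)$ for every $A\subseteq I_1$ and $s>0$; applied with $A=\alpha^{-1}(F\cap I_d)$ together with $\alpha(\alpha^{-1}(F\cap I_d))=F\cap I_d$, this yields the inequality after a standard reduction from $\R^d$ to the unit cube using countable stability and translation invariance of Hausdorff dimension. For the lower bound $\h(F)\geq d\cdot\h(\alpha^{-1}(F))$, I would use the pre-image condition: given a cover $\{U_i\}$ of $F$ with $\dm(U_i)=\delta_i$, enclose each $U_i$ in a union of at most $C(d)$ cubes from $\Gamma_{n_i}^{(d)}$ with $N^{-n_i}\asymp\delta_i$; pulling back via $\alpha^{-1}$ produces a cover of $\alpha^{-1}(F)$ by at most $C(d)\,K_d$ intervals of length $\asymp\delta_i^d$, whence $\mathcal{H}^s(\alpha^{-1}(F))\leq C(d,s)\cdot\mathcal{H}^{ds}(F)$, and choosing any $s>\h(F)/d$ forces $\h(\alpha^{-1}(F))\leq s$. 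The main obstacle is the construction and verification that the pre-image condition holds with a constant $K_d$ independent of the level $n$; the Hilbert-curve geometry is classical, but phrasing it precisely in the fractal-structure language of the paper and controlling boundary overlaps between adjacent level-$n$ cubes without inflating $K_d$ requires careful bookkeeping, after which the dimension computations above become routine consequences.
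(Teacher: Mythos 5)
Your proposal is correct in substance but follows a genuinely different route from the paper's. The paper never manipulates Hausdorff measure directly for part (ii): it routes everything through the fractal dimensions for fractal structures, proving $\dim_{\gf}^5(F)=d\cdot\dim_{\ef}^5(\alpha^{-1}(F))$ under Main hypotheses \ref{hypo:main} (Theorem \ref{teo:dim5-to-R}, whose hard half is Proposition \ref{prop:awk}) and then separately showing $\h=\dim_{\ef}^5$ for finitely splitting fractal structures satisfying the $\kappa$-condition (Theorem \ref{teo:k-cond->H=cinco}); part (i) is handled analogously via fractal dimensions I and II. You replace this machinery by two concrete geometric properties of the curve itself: the $1/d$-H\"older estimate, which gives $\h(F)=\h(\alpha(\alpha^{-1}(F)))\le d\cdot\h(\alpha^{-1}(F))$ by the standard push-forward of covers, and the bounded pre-image multiplicity, which gives the reverse inequality by pulling covers back. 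These are precisely what the paper's abstract hypotheses (the identity $\dm(\alpha(A))^d=c\cdot\dm(A)$, the $\kappa$-condition, finite splitting) encode, so the proofs are parallel; yours is more elementary and self-contained, while the paper's buys the general GF-space statements (Theorems \ref{teo:bc-to-R} and \ref{teo:H-to-R}) beyond the Euclidean case. Three small repairs are needed. First, in your H\"older chain $N^{-n}\le|x-y|^{1/d}$ fails for the $n$ you fixed; take the largest $n$ with $|x-y|\le N^{-nd}$, so $N^{-n}<N\cdot|x-y|^{1/d}$, and absorb $N$ into $\kappa_d$. Second, $\alpha^{-1}(C)$ is only \emph{contained in} the union of the at most $3^d$ intervals $J$ with $\phi_n(J)\cap C\ne\emptyset$, which suffices for covering. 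Third, for part (i) you need no $s$-body correction as in Proposition \ref{lema:GARCIA2017}: since $\alpha(J)=\phi_n(J)$ exactly, the counts agree on the nose, $\mathcal{N}_{\Gamma_n}(\alpha^{-1}(F))=\mathcal{N}_{\Delta_n}(F)$ (the paper's Proposition \ref{prop:num1}); fattening to $F_s$ is only needed for injective curves. Finally, a fixed $\alpha$ only sees $F\cap I_d$, so the passage to arbitrary $F\subseteq\R^d$ should go through the similarity reduction of Section \ref{sec:5} rather than countable stability.
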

As such, Theorem~\ref{teo:eu-dim-to-R-intro} gives the equality (up to a factor, namely, the embedding dimension) between the box dimension of a $d-$dimensional subset $F$ and the box dimension of its pre-image, $\alpha^{-1}(F)\subseteq \R$. Interestingly, such a theorem also allows calculating Hausdorff dimension of $d-$dimensional Euclidean subsets in terms of Hausdorff dimension of their $1-$dimensional pre-images via $\alpha$. It is worth pointing out that Section \ref{sec:how-to-construct} provides an approach allowing the construction of that map $\alpha:I_1\to I_d$ (as well as appropriate fractal structures) to effectively calculate the fractal dimension by means of Theorem \ref{teo:curves}. It is also worth noting that Theorem~\ref{teo:eu-dim-to-R-intro} stands as a consequence of some other results proved in more general settings (c.f.~Section~\ref{sec:4}). 

More generally, let $X,Y$ be a pair of sets. The main goal in this paper is to calculate the (more awkward) fractal dimension of objects contained in $Y$ in terms of the (easier to be calculated) fractal dimension of subsets of $X$  through an appropriate function $\alpha:X\to Y$. 
In other words, we shall guarantee the existence of a map $\alpha:X\to Y$ satisfying some desirable properties allowing to achieve the identity $\dim(F)=d\cdot \dim(\alpha^{-1}(F))$, where $F\subseteq Y, \alpha^{-1}(F)\subseteq X$, and $\dim$ refers to fractal dimensions I, II, III, IV, and V (introduced in previous works by the authors, c.f. \cite{DIM3,DIM1,DIM4}), as well as the classical fractal dimensions, namely, both box and Hausdorff dimensions. The nature of both spaces $X$ and $Y$ will be unveiled along each section in this paper. Interestingly, our results could be further applied to calculate the fractal dimension in non-Euclidean contexts including the domain of words (c.f.~\cite{FernandezMartinez2012}) and metric spaces such as the space of functions or the hyperspace of $Y$ (namely, the set containing all the closed or compact subsets of $Y$) to list a few. For $X$ we can use $[0,1]$, where calculations are easier, but also other spaces like the Cantor set $\{0,1\}^\N$ which is also a place where the calculation are easy.

The structure of this article is as follows. Firstly, Section \ref{sec:2} contains the basics on the fractal dimension models for a fractal structure that will support the main results to appear in upcoming sections. Section \ref{sec:3} is especially relevant since it provides the main requirements to be satisfied in most of the theoretical results contributed along this paper (c.f.~Main hypotheses \ref{hypo:main}). It is worth mentioning that such conditions are satisfied, in particular, by the natural fractal structure on each Euclidean subset (c.f.~Definition \ref{def:nfs}). Subsection~\ref{sub:bc-type} contains several results allowing the calculation of the box type dimensions (namely, fractal dimensions I, II, III, and standard box dimension, as well) for a map $\alpha:X\to Y$ and generic spaces $X$ and $Y$, each of them endowed with a fractal structure satisfying some conditions. Similarly, in Subsection~\ref{sub:h-type}, we explain how to deal with the calculation of Hausdorff type dimensions (i.e., fractal dimensions IV, V, and classical Hausdorff dimension). As a consequence of them, in Section~\ref{sec:4} we shall prove some results for both the box and Hausdorff dimensions. Also, we would like to highlight Theorem \ref{teo:eu-dim-to-R-intro} as a more operational version of both Theorems \ref{teo:bc-to-R} and \ref{teo:H-to-R} in the Euclidean setting (c.f.~Section~\ref{sec:5}). That result becomes especially appropriate to tackle with applications of fractal dimension in higher dimensional Euclidean spaces and lies in line with both Theorems~\ref{teo:Skubalska-Rafajowicz2005} and \ref{teo:GARCIA2017} (with regard to the box dimension). Finally, in Section~\ref{sec:6}, we explore a constructive approach to define an appropriate function $\alpha:X\to Y$ satisfying all the required conditions. For illustration purposes, we conclude the paper by some applications of that result to iteratively construct both the Hilbert's square-filling curve as well as a curve filling the whole Sierpi\'nski triangle. 

\section{Key concepts and starting results}\label{sec:2}

\subsection{Fractal structures}\label{sub:fs}
Fractal structures were first sketched by Bandt and Retta in \cite{bandt1992} and formally defined afterwards by Arenas and S\'anchez-Granero in \cite{SG99A} to characterize non-Archimedean quasi-metrization. 
By a covering of a nonempty set $X$, we shall understand a family $\Gamma$ of subsets of $X$ such that $X=\cup\{A:A\in \Gamma\}$. Let $\Gamma_1$ and $\Gamma_2$ be two coverings of $X$. The notation $\Gamma_2\prec \Gamma_1$ means that $\Gamma_2$ is a \emph{refinement} of $\Gamma_1$, i.e., for all $A\in \Gamma_2$, there exists $B\in \Gamma_1$ such that $A\subseteq B$. In addition, by $\Gamma_2\prec\prec \Gamma_1$, we shall understand both that $\Gamma_2\prec \Gamma_1$ and also that $B=\cup\{A\in \Gamma_2:A\subseteq B\}$ for all $B\in \Gamma_1$. Thus, a fractal structure on $X$ is a countable family of coverings $\ef=\{\Gamma_n\}_{n\in \N}$ such that $\Gamma_{n+1}\prec\prec \Gamma_n$. The pair $(X,\ef)$ is called a GF-space and covering $\Gamma_n$ is named \emph{level} $n$ of $\ef$. 
Along the sequel, we shall allow that a set could appear twice or more in any level of a fractal structure. 
Let $x\in X$ and $\ef$ be a fractal structure on $X$. Then we can define the star at $x$ in level $n\in \N$ as $\St(x,\Gamma_n)=\cup\{A\in \Gamma_n:x\in A\}$. Next, we shall describe the concept of natural fractal structure on any Euclidean space that will play a key role throughout this article.
\begin{definition}[c.f.~Definition 3.1 in \cite{DIM1}]\label{def:nfs}
The natural fractal structure on the Euclidean space $\R^d$ is given by the countable family of coverings $\ef=\{\Gamma_n:n\in \N\}$ with levels defined as
\[
\Gamma_n=\left\{\left[\frac{k_1}{2^n},\frac{1+k_1}{2^n}\right]\times \cdots \times \left[\frac{k_d}{2^n},\frac{1+k_d}{2^n}\right]:k_1,\ldots,k_d\in \Z\right\}.
\]
\end{definition}
As such, the natural fractal structure on $\R^d$ is just a tiling consisting of $\frac{1}{2^n}-$cubes on $\R^d$. Notice also that natural fractal structures may be induced on Euclidean subsets of $\R^d$. For instance, the natural fractal structure on $[0,1]\subset \R$ is the countable family of coverings $\ef$ with levels given by $\Gamma_n=\{[\frac{k}{2^n},\frac{1+k}{2^n}]:k=0,1,\ldots,2^n-1\}$ for all $n\in \N$.

\subsection{Fractal dimensions for fractal structures}\label{sub:fdims}
The fractal dimension models for a fractal structure involved along this paper, namely, fractal dimensions I, II, III, IV, and V, were introduced previously by the authors (c.f.~\cite{DIM3,DIM1,DIM4}) and proved to generalize both box and Hausdorff dimensions in the Euclidean setting (c.f.~\cite[Theorem 3.5, Theorem 4.7]{DIM1},\cite[Theorem 4.15]{DIM3}, \cite[Theorem 3.13]{DIM4}) through their natural fractal structures (c.f.~\cite[Definition 3.1]{DIM1}). Thus, they become ideal candidates to explore the fractal nature of subsets. Next, we recall the definitions of all the box type dimensions appeared along this article. 
\begin{definition}[box type dimensions]\label{def:fdims}
Let $F$ be a subset of $X$.
\begin{enumerate}[(1)]
\item (\cite{Pontrjagin1932}) If $X=\R^d$, then the (lower/upper) box dimension of $F$ is defined through the (lower/upper) limit
\[
\bc(F)=\lim_{\delta\to 0}\frac{\log \mathcal{N}_{\delta}(F)}{-\log \delta},
\] 
where $\mathcal{N}_{\delta}(F)$ can be calculated as the number of $\delta-$cubes that intersect $F$ (among other equivalent quantities).
\item (c.f.~Definition 3.3 in \cite{DIM1}) Let $\ef$ be a fractal structure on $X$. We shall denote $\anf=\{A\in \Gamma_n:A\cap F\neq \emptyset\}$ and $\mathcal{N}_n(F)=\cd(\anf)$, as well. The (lower/upper) fractal dimension I of $F$ is given by the next (lower/upper) limit:
\[
\uno(F)=\lim_{n\to \infty}\frac{\log \mathcal{N}_n(F)}{n\log 2}.
\]
\item Let $\ef$ be a fractal structure on a metric space $(X,\rho)$. 
\begin{enumerate}[(i)]
\item (c.f.~Definition 4.2 in \cite{DIM1}) Let us denote $\dm(F,\Gamma_n)=\sup\{\dm(A):A\in \anf\}$, where $\dm(A)=\sup\{\rho(x,y):x,y\in A\}$, as usual. The (lower/upper) fractal dimension II of $F$ is defined as 
\[
\dos(F)=\lim_{n\to \infty}\frac{\log \mathcal{N}_n(F)}{-\log \dm(F,\Gamma_n)}.
\] 
\item (c.f.~Definition 4.2 in \cite{DIM3}) Let $s>0$, assume that $\dm(F,\Gamma_n)\to 0$, and define
\[
\mathcal{H}_{n,3}^s(F)=\inf\left\{\sum \dm(A_i)^s:\{A_i\}_{i\in I}\in \mathcal{A}_{n,3}(F)\right\},
\]
where
$\mathcal{A}_{n,3}(F)=\{\mathcal{A}_l(F):l\geq n\}$.\label{dim:3}
Further, let $\hks=\lim_{n\to \infty}\mathcal{H}_{n,k}^s(F)$. The fractal dimension III of $F$ is the (unique) critical point satisfying that
\[
\tres(F)=\sup\{s\geq 0:\mathcal{H}_3^s(F)=\infty\}=\inf\{s\geq 0:\mathcal{H}_3^s(F)=0\}.
\]
\end{enumerate}
\end{enumerate}
\end{definition}
Let $(X,\rho)$ be a metric space, $\delta>0$, and $F$ be a subset of $X$. By a $\delta-$cover of $F$, we shall understand a countable family of subsets of $X$, $\{U_i\}_{i\in I}$, with $\dm(U_i)\leq \delta$ for all $i\in I$ and such that $F\subseteq \cup_{i\in I}U_i$. Next, we provide the definitions for all Hausdorff type definitions involved in this paper.
\begin{definition}[Hausdorff type dimensions]\label{def:fdims}
Let $(X,\rho)$ be a metric space, $s>0$, and $F$ be a subset of $X$.
\begin{enumerate}[(1)]
\item (\cite{MR1511917}) Let $\mathcal{C}_{\delta}(F)$ denote the class of all $\delta-$covers of $F$, define
\[
\mathcal{H}_{\delta}^s(F)=\inf\left\{\sum_{i\in I}\dm(U_i)^s:\{U_i\}_{i\in I}\in \mathcal{C}_{\delta}(F)\right\},
\]
and let the $s-$dimensional Hausdorff measure of $F$ be given by 
\[
\mathcal{H}_{\textrm{H}}^s(F)=\lim_{\delta\to 0}\mathcal{H}_{\delta}^s(F).
\] 
Hausdorff dimension of $F$ is the (unique) critical point satisfying that
\[
\h(F)=\sup\{s\geq 0:\mathcal{H}_H^s(F)=\infty\}=\inf\{s\geq 0:\mathcal{H}_H^s(F)=0\}.
\]
\item Let $\ef$ be a fractal structure on a metric space $(X,\rho)$, assume that $\dm(F,\Gamma_n)\to 0$, and define (c.f.~Definition 3.2 in \cite{DIM4})
\begin{enumerate}[(i)]
\item 
\[
\mathcal{H}_{n,4}^s(F)=\inf\left\{\sum \dm(A_i)^s:\{A_i\}_{i\in I}\in \mathcal{A}_{n,4}(F)\right\},
\]
where
$\mathcal{A}_{n,4}(F)=\{\{A_i\}_{i\in I}:A_i\in \cup_{l\geq n}\Gamma_l,F\subseteq \cup_{i\in I}A_i,\cd(I)<\infty\}$,\label{dim:4}
and $\mathcal{H}_4^s(F)=\lim_{n\to \infty}\mathcal{H}_{n,4}^s(F)$. The fractal dimension IV of $F$ is the (unique) critical point satisfying that
\[
\cuatro(F)=\sup\{s\geq 0:\mathcal{H}_4^s(F)=\infty\}=\inf\{s\geq 0:\mathcal{H}_4^s(F)=0\}.
\]
\item \[
\mathcal{H}_{n,5}^s(F)=\inf\left\{\sum \dm(A_i)^s:\{A_i\}_{i\in I}\in \mathcal{A}_{n,5}(F)\right\},
\]
where
$\mathcal{A}_{n,5}(F)=\{\{A_i\}_{i\in I}:A_i\in \cup_{l\geq n}\Gamma_l,F\subseteq \cup_{i\in I}A_i\}$,\label{dim:5}
and $\mathcal{H}_5^s(F)=\lim_{n\to \infty}\mathcal{H}_{n,5}^s(F)$. The fractal dimension V of $F$ is the (unique) critical point satisfying that
\[
\cinco(F)=\sup\{s\geq 0:\mathcal{H}_5^s(F)=\infty\}=\inf\{s\geq 0:\mathcal{H}_5^s(F)=0\}.
\]
\end{enumerate}
\end{enumerate}
\end{definition}
It is worth pointing out that fractal dimensions III, IV, and V always exist since the sequences $\{\mathcal{H}_{n,k}^s(F)\}_{n\in \N}$ are monotonic in $n\in \N$ for $k=3,4,5$. 

\subsection{Connections among fractal dimensions}

Next, we collect some theoretical links among the box (resp., Hausdorff) dimension and the fractal dimension models for a fractal structure introduced in previous Subsection \ref{sub:fdims}. The following result stands in the Euclidean setting. 

\begin{theorem}\label{teo:previo}
Let $\ef$ be the natural fractal structure induced on $F\subseteq \R^d$. The following statements hold.
\begin{enumerate}[(i)]
\item (c.f.~\cite{DIM1}, Theorem 3.5)\label{teo:bc=uno} $\bc(F)=\uno(F)$.
\item (c.f.~\cite{DIM1}, Theorem 4.7)\label{teo:bc=dos} $\bc(F)=\dos(F)$.
\item (c.f.~\cite{DIM3}, Theorem 4.15)\label{teo:bc=tres} $\bc(F)=\tres(F)$.
\item (c.f.~\cite{DIM4}, Theorem 3.12)\label{teo:4=H} $\h(F)=\cuatro(F)$ for each compact subset $F$ of\, $\R^d$.\label{teo:previo2}
\item (c.f.~\cite{DIM4}, Theorem 3.10)\label{teo:5=H} $\h(F)=\cinco(F)$.
\end{enumerate}
\end{theorem}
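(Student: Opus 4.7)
The plan is to verify the five identities in order, exploiting the explicit form of the natural fractal structure on $\R^d$, whose level $n$ is the tiling by dyadic $2^{-n}$-cubes, each of diameter $\sqrt{d}\cdot 2^{-n}$. For (i), observe that $\mathcal{N}_n(F)$ coincides with $\mathcal{N}_{2^{-n}}(F)$ in the usual $\delta$-cube count, so $\uno(F)$ is simply $\bc(F)$ evaluated along the dyadic subsequence $\delta_n=2^{-n}$; a standard sandwich (for $2^{-n-1}<\delta\le 2^{-n}$ each $\delta$-cube meets at most a bounded number of $2^{-n}$-cubes and conversely) shows that restricting to this subsequence does not affect the limit. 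For (ii), since every $A\in\Gamma_n$ has $\dm(A)=\sqrt{d}\cdot 2^{-n}$, one has $\dm(F,\Gamma_n)=\sqrt{d}\cdot 2^{-n}$ whenever $\anf\ne\emptyset$, and the denominator of $\dos(F)$ differs from $n\log 2$ only by the constant $\log\sqrt{d}$, whose effect vanishes in the limit. For (iii), the admissible covers in $\mathcal{A}_{n,3}(F)=\{\mathcal{A}_l(F):l\ge n\}$ are precisely the full level covers, so
\[
\mathcal{H}_{n,3}^s(F)=\inf_{l\ge n}\mathcal{N}_l(F)\cdot(\sqrt{d}\cdot 2^{-l})^s,
\]
whence $\mathcal{H}_3^s(F)=(\sqrt{d})^s\liminf_{l\to\infty}\mathcal{N}_l(F)\cdot 2^{-ls}$; its critical exponent is $\liminf_l\log\mathcal{N}_l(F)/(l\log 2)$, which by (i) equals $\bc(F)$ whenever the latter exists.

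For the Hausdorff-type identities, $\h(F)\le\cinco(F)$ is immediate since every $\{A_i\}\in\mathcal{A}_{n,5}(F)$ is a $(\sqrt{d}\cdot 2^{-n})$-cover of $F$. Conversely, given a $\delta$-cover $\{U_i\}$ of $F$ with $\delta<2^{-n}$, pick $l_i\ge n$ with $2^{-l_i}\le\dm(U_i)<2^{-l_i+1}$; each $U_i$ meets at most $3^d$ cubes of $\Gamma_{l_i}$, each of diameter $\sqrt{d}\cdot 2^{-l_i}\le\sqrt{d}\,\dm(U_i)$, so summing yields $\mathcal{H}_{n,5}^s(F)\le 3^d d^{s/2}\mathcal{H}_{\delta}^s(F)$, hence $\cinco(F)\le\h(F)$. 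For (iv), $\h(F)\le\cuatro(F)$ follows from $\h=\cinco\le\cuatro$ (since $\mathcal{A}_{n,4}(F)\subseteq\mathcal{A}_{n,5}(F)$ makes the infimum defining $\mathcal{H}_4^s$ no smaller). The reverse inequality for compact $F$ uses an inflation-plus-compactness argument: given a countable cover $\{A_i\}\in\mathcal{A}_{n,5}(F)$ with $A_i\in\Gamma_{l_i}$, let $\tilde A_i$ be the union of $A_i$ with its $3^d-1$ neighbors in $\Gamma_{l_i}$, so that $A_i\subseteq\mathrm{int}(\tilde A_i)$; the open cover $\{\mathrm{int}(\tilde A_i)\}$ of the compact set $F$ admits a finite subcover indexed by $i_1,\ldots,i_K$, and the at most $3^dK$ cubes constituting $\tilde A_{i_1}\cup\cdots\cup\tilde A_{i_K}$ form a finite $\mathcal{A}_{n,4}(F)$-cover of $F$ with $s$-sum at most $3^d\sum_i\dm(A_i)^s$. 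Passing to infima and to the limit in $n$ yields $\cuatro(F)\le\cinco(F)=\h(F)$.

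The delicate step is (iv): parts (i)--(iii) reduce to bookkeeping on the dyadic grid, and (v) bridges Hausdorff covers and fractal-structure covers by the standard cube-replacement trick. The obstruction in (iv) is the restriction to \emph{finite} covers in $\mathcal{A}_{n,4}(F)$, which cannot be reconciled with countable Hausdorff-type covers unless finite subcovers can be extracted; compactness of $F$ is essential precisely because the fractal-structure cells are closed, forcing one to first inflate them to open sets before invoking the finite-subcover property. Without compactness this passage fails, which is exactly why $\cinco$ rather than $\cuatro$ plays the role of Hausdorff dimension in full generality.
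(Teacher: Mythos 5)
Your argument is correct, but note that the paper itself offers no proof of this theorem: it is a list of results quoted from \cite{DIM1}, \cite{DIM3} and \cite{DIM4}, so there is no internal argument to compare against. What you have produced is a self-contained verification that works because the natural fractal structure is the dyadic tiling: every element of $\Gamma_n$ has diameter $\sqrt{d}\,2^{-n}$, which reduces (i)--(iii) to the standard fact that the box-counting limit may be taken along $\delta_n=2^{-n}$ together with elementary bookkeeping, while your cube-replacement argument for (v) and your inflation-plus-compactness argument for (iv) are precisely the specializations, with $\kappa=3^d$ and $\kappa'=2^d$, of the general machinery the paper develops later (Proposition \ref{prop:k-cond->H=cinco}, Theorem \ref{teo:k-cond->H=cinco} and Theorem \ref{teo:k-cond->H=cuatro}); so your route both certifies the quoted results and anticipates the paper's own techniques. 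Two small points deserve a sentence each if you write this up. In (v), a cover element $U_i$ with $\dm(U_i)=0$ admits no level $l_i$ with $2^{-l_i}\le \dm(U_i)$, so degenerate elements must be handled separately by replacing each with the at most $2^d$ cubes of a sufficiently deep level, chosen so that their total contribution to the $s$-sum is below $\eps\, 2^{-i}$. In (iii), your computation actually identifies $\tres(F)$ with the \emph{lower} box dimension, so the stated identity $\bc(F)=\tres(F)$ should be read, as you correctly note, under the hypothesis that the box dimension exists.
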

It is also worth pointing out that under the $\kappa-$condition for a fractal structure we recall next, the box dimension equals both fractal dimensions II and III on a generic GF-space.

\begin{definition}\label{def:k-cond}
Let $\ef$ be a fractal structure on $X$. We say that $\ef$ lies under the $\kappa-$condition if there exists a natural number $\kappa$ such that for all $n\in \N$, every subset $A$ of $X$ with $\dm(A)\leq \dm(\Gamma_n)$ intersects at most to $\kappa$ elements in $\Gamma_n$.
\end{definition}

\begin{theorem}[c.f.~\cite{DIM1}, Theorem 4.13 (1)]\label{teo:k-cond->bc=dos}
Assume that $\ef$ satisfies the $\kappa-$condition. If $\dm(F,\Gamma_n)\to 0$ and there exists $\bc(F)$, then $\bc(F)=\dos(F)$.
\end{theorem}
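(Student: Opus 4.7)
The plan is to establish a two-sided comparison between $\mathcal{N}_n(F)$ and the metric cover-counting quantity $N_{\delta}(F)$ at the specific scale $\delta_n:=\dm(F,\Gamma_n)$, where $N_\delta(F)$ denotes the least cardinality of a cover of $F$ by sets of diameter at most $\delta$. This is one of the standard quantities equivalent (up to a bounded multiplicative factor) to the $\delta$-cube count used in the statement of $\bc$, and it is what the $\kappa$-condition is naturally designed to control.

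The first inequality is immediate: the family $\anf$ is itself a cover of $F$ by sets of diameter at most $\delta_n$, so $N_{\delta_n}(F)\le \mathcal{N}_n(F)$. For the reverse inequality, let $\{U_i\}$ be an optimal cover realising $N_{\delta_n}(F)$, each $U_i$ with $\dm(U_i)\le \delta_n$. Since $\anf\subseteq\Gamma_n$ we have $\delta_n\le\dm(\Gamma_n)$, so the $\kappa$-condition applies to each $U_i$: it meets at most $\kappa$ elements of $\Gamma_n$. Moreover, every $A\in\anf$ contains some point of $F$, which lies in at least one $U_i$, so $A$ is among the (at most $\kappa$) elements of $\Gamma_n$ met by that particular $U_i$. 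Summing over $i$ gives $\mathcal{N}_n(F)\le \kappa\cdot N_{\delta_n}(F)$.

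Combining the two bounds yields
$$\log N_{\delta_n}(F)\le \log \mathcal{N}_n(F)\le \log\kappa+\log N_{\delta_n}(F).$$
Dividing by $-\log\delta_n$, which tends to $+\infty$ because $\delta_n\to 0$ by hypothesis, the additive $\log\kappa$ becomes negligible, so
$$\dos(F)=\lim_{n\to\infty}\frac{\log\mathcal{N}_n(F)}{-\log\delta_n}=\lim_{n\to\infty}\frac{\log N_{\delta_n}(F)}{-\log\delta_n}.$$
The right-hand side is a subsequential limit of $\log N_\delta(F)/(-\log\delta)$ as $\delta\to 0^+$; since $\bc(F)$ is assumed to exist as a genuine two-sided limit, every such subsequential limit coincides with it, giving $\dos(F)=\bc(F)$.

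The main subtlety, rather than a genuine obstacle, is the interchangeability between the cover-counting quantity $N_\delta$ naturally controlled by the $\kappa$-condition and whichever concrete counting function one has fixed in the definition of $\bc$. In any metric space the admissible box-counting quantities differ by at most a bounded multiplicative constant, so they share the same logarithmic asymptotics; once this standard equivalence is invoked the argument proceeds uniformly without Euclidean-specific assumptions.
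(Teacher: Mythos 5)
Your argument is correct. Note that the paper does not prove this statement itself: it is imported verbatim from \cite{DIM1} (Theorem 4.13(1)), and the two-sided comparison you give --- $N_{\delta_n}(F)\le \mathcal{N}_n(F)\le \kappa\cdot N_{\delta_n}(F)$ at the scale $\delta_n=\dm(F,\Gamma_n)$, with the upper bound coming from the $\kappa$-condition applied to each covering set $U_i$ (every $A\in\mathcal{A}_n(F)$ meets some $U_i$ through a point of $F$) --- is essentially the standard argument used there. The two steps worth stating explicitly, which you do handle, are (a) that the existence of $\bc(F)$ as a genuine limit is what lets you pass from the sequence of scales $\delta_n\to 0$ to the full limit, and (b) that the covering-number quantity $N_\delta$ is equivalent, up to bounded multiplicative constants in both the count and the scale, to whichever counting function defines $\bc$, so the logarithmic asymptotics agree.
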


\begin{theorem}[c.f.~\cite{DIM3}, Theorem 4.17]\label{teo:k-cond->bc=tres}
Assume that $\ef$ is under the $\kappa-$condition. If $\dm(A)=\dm(F,\Gamma_n)$ for all $A\in \anf$, then $\bc(F)=\tres(F)$.
\end{theorem}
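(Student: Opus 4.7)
The plan is to reduce $\mathcal{H}_3^s(F)$ to an explicit expression involving $\mathcal{N}_n(F)$ and $\deltafn$, read off its critical exponent, and finally invoke Theorem~\ref{teo:k-cond->bc=dos} to match this exponent with $\bc(F)$.

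First I would exploit the hypothesis that $\dm(A)=\deltafn$ for every $A\in\anf$. Since $\mathcal{A}_{n,3}(F)$ consists exactly of the full covers $\mathcal{A}_l(F)$ with $l\geq n$, summing diameters across any such cover gives $\sum_{A\in\mathcal{A}_l(F)}\dm(A)^s=\mathcal{N}_l(F)\cdot\dm(F,\Gamma_l)^s$. Therefore
\[
\mathcal{H}_{n,3}^s(F)=\inf_{l\geq n}\mathcal{N}_l(F)\cdot\dm(F,\Gamma_l)^s,
\]
and since this quantity is non-decreasing in $n$, passing to the limit yields
\[
\mathcal{H}_3^s(F)=\liminf_{l\to\infty}\mathcal{N}_l(F)\cdot\dm(F,\Gamma_l)^s.
\]

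Next, setting $s_l:=\log\mathcal{N}_l(F)/(-\log\dm(F,\Gamma_l))$ and using the standing assumption $\dm(F,\Gamma_l)\to 0$ implicit in the definition of fractal dimension III, the identity $\mathcal{N}_l(F)\cdot\dm(F,\Gamma_l)^s=\dm(F,\Gamma_l)^{s-s_l}$ allows us to extract the critical exponent. If $s<\liminf_l s_l$, then $s_l-s$ is bounded away from $0$ eventually and $\dm(F,\Gamma_l)^{s-s_l}\to\infty$, so $\mathcal{H}_3^s(F)=\infty$. If $s>\liminf_l s_l$, then choosing a subsequence realizing the $\liminf$ makes $s-s_{l_k}$ bounded below by a positive constant and $\dm(F,\Gamma_{l_k})^{s-s_{l_k}}\to 0$, so $\mathcal{H}_3^s(F)=0$. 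Hence the critical exponent is $\tres(F)=\liminf_l s_l$, i.e., the lower fractal dimension II of $F$.

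Finally, Theorem~\ref{teo:k-cond->bc=dos} asserts that under the $\kappa$-condition (with $\deltafn\to 0$ and $\bc(F)$ existing) one has $\bc(F)=\dos(F)=\lim_l s_l$, so both the lower and upper limits of $s_l$ coincide with $\bc(F)$. Combining with the previous paragraph gives $\tres(F)=\liminf_l s_l=\bc(F)$, as required. The main obstacle I expect is the bookkeeping in the critical exponent analysis: the argument above actually shows that the diameter hypothesis alone already forces $\tres(F)$ to equal $\liminf_l s_l$, so the sole role of the $\kappa$-condition is to upgrade this lower limit to the honest limit $\dos(F)=\bc(F)$ via Theorem~\ref{teo:k-cond->bc=dos}; getting this split of responsibilities right, and not overlooking the boundary case $s=\liminf_l s_l$, is the delicate point.
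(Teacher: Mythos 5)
The paper does not actually prove this statement: it is imported verbatim from \cite{DIM3} (Theorem 4.17), so there is no in-paper argument to compare against. Judged on its own, your proof is correct and its division of labour is exactly right. Because $\mathcal{A}_{n,3}(F)$ contains only the full covers $\mathcal{A}_l(F)$ with $l\geq n$, the hypothesis $\dm(A)=\dm(F,\Gamma_l)$ for all $A\in\mathcal{A}_l(F)$ does collapse $\mathcal{H}_{n,3}^s(F)$ to $\inf_{l\geq n}\mathcal{N}_l(F)\,\dm(F,\Gamma_l)^s$, the monotone limit is the $\liminf$, and the exponent analysis via $s_l$ (using $\dm(F,\Gamma_l)\to 0$, which is a standing assumption in the definition of $\tres$) correctly identifies $\tres(F)$ with the lower fractal dimension II; none of this uses the $\kappa$-condition, which enters only through Theorem~\ref{teo:k-cond->bc=dos}. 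Two small bookkeeping points. First, Theorem~\ref{teo:k-cond->bc=dos} as quoted assumes that $\bc(F)$ exists; this is implicit in the conclusion $\bc(F)=\tres(F)$ (and in the source the result is really the sandwich $\underline{\dim}_{\rm B}(F)\leq\tres(F)\leq\overline{\dim}_{\rm B}(F)$, with equality when the box dimension exists), but you should say explicitly that you read the statement as conditional on that existence. Second, the ``boundary case'' $s=\liminf_l s_l$ that you worry about needs no attention: the critical point is determined by the behaviour strictly below and strictly above it, which is all your two-sided estimate establishes and all that is required.
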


\section{Calculating the fractal dimension in higher dimensional spaces}\label{sec:3}

First, we would like to point out that all the results contributed along this section stand in the setting of metric spaces, whereas the results provided in both \cite{GARCIA2017} and \cite{SkubalskaRafajowicz2005} hold for Euclidean subsets regarding the box dimension.

Let $X$ and $Y$ denote metric spaces. The following hypothesis will be required in most of the theoretical results contributed hereafter.
\begin{hypo}\label{hypo:main}
Let $\alpha:X\to Y$ be a function between a pair of GF-spaces, $(X,\ef)$ and $(Y,\gf)$, with 
$\gf=\alpha(\ef)$. Assume, in addition, that
there exists a pair of real numbers, $d$ and $c\neq 0$, such that the following identity stands for each $A\in \Gamma_n$ and all $n\in \N$:
\begin{equation}\label{hypo:diam}
\dm(\alpha(A))^d=c\cdot \dm(A).	
\end{equation}
\end{hypo}


\subsection{Calculating the box type dimensions in higher dimensional spaces}\label{sub:bc-type}

\begin{lemma}\label{lema:1}
Let $F\subseteq Y$, $n\in \N$, and $A\in \Gamma_n$. Then 
\[
A\cap \alpha^{-1}(F)\neq \emptyset\Leftrightarrow \alpha(A)\cap F\neq~\emptyset.
\]
\end{lemma}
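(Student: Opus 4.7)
The plan is to observe that this is a purely set-theoretic equivalence: it holds for \emph{any} function $\alpha\colon X\to Y$ and \emph{any} subset $A\subseteq X$, with no need to invoke either the fractal-structure framework or the diameter identity \eqref{hypo:diam} from the Main hypotheses. The proof therefore just consists of unpacking the definitions of direct image and preimage and exhibiting an explicit witness in each direction.

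For the forward implication, I would pick any $x\in A\cap \alpha^{-1}(F)$. From $x\in \alpha^{-1}(F)$ we get $\alpha(x)\in F$, while $x\in A$ forces $\alpha(x)\in \alpha(A)$. Hence $\alpha(x)\in \alpha(A)\cap F$, so this intersection is nonempty.

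For the converse, I would pick any $y\in \alpha(A)\cap F$. Since $y\in \alpha(A)$, there exists some $x\in A$ with $\alpha(x)=y$; since moreover $y\in F$, this $x$ lies in $\alpha^{-1}(F)$. Thus $x\in A\cap \alpha^{-1}(F)$, and this intersection is nonempty.

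There is no real obstacle here: neither direction uses $A\in \Gamma_n$ in any essential way, and the argument does not appeal to $\gf=\alpha(\ef)$ or to the relation $\dm(\alpha(A))^d=c\cdot \dm(A)$. The lemma is simply a bookkeeping step that will be used repeatedly in the subsequent box-type arguments (for instance, to relate $\mathcal{A}_n(\alpha^{-1}(F))$ with $\mathcal{A}_n(F)$ via the bijection induced by $\alpha$ on levels of the fractal structure).
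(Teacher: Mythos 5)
Your proof is correct and follows essentially the same argument as the paper's: pick a witness $x\in A\cap\alpha^{-1}(F)$ for the forward direction and a preimage $a\in A$ of a point $y\in\alpha(A)\cap F$ for the converse. Your added observation that the statement is a purely set-theoretic fact, independent of the fractal-structure hypotheses, is accurate and consistent with how the paper uses the lemma.
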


\begin{proof}~
Next, we shall prove both implications.
\begin{enumerate}
\item [($\Rightarrow$)] Let $x\in A\cap \alpha^{-1}(F)$. Thus, $\alpha(x)\in \alpha(A)$ as well as $\alpha(x)\in F$. Hence, $\alpha(x)\in \alpha(A)\cap F$, so $\alpha(A)\cap F\neq \emptyset$.
\item [($\Leftarrow$)] Let $y\in \alpha(A)\cap F$. Since $y\in \alpha(A)$, then there exists $a\in A$ such that $y=\alpha(a)$. Also, it holds that $a\in \alpha^{-1}(F)$ since $y=\alpha(a)\in F$. Hence, $a\in A\cap \alpha^{-1}(F)$, so $A\cap \alpha^{-1}(F)\neq \emptyset$. 
\end{enumerate}
\end{proof}


Let us consider the next two families of elements in levels $n$ of both $\ef$ and $\gf$:
\begin{align*}
\mathcal{A}_{\Gamma_n}(\alpha^{-1}(F))&=\{A\in \Gamma_n:A\cap \alpha^{-1}(F)\neq \emptyset\}\\
\mathcal{A}_{\Delta_n}(F)&=\{B\in \Delta_n:B\cap F\neq \emptyset\}.
\end{align*}
Additionally, we shall denote $\mathcal{N}_{\Gamma_n}(\alpha^{-1}(F))=\cd(\mathcal{A}_{\Gamma_n}(\alpha^{-1}(F)))$ and $\mathcal{N}_{\Delta_n}(F)=\cd(\mathcal{A}_{\Delta_n}(F))$, as well. It is worth pointing out that Lemma \ref{lema:1} yields the next result.
\begin{prop}\label{prop:num1}
Let $\alpha:X\to Y$ be a function between a pair of GF-spaces, $(X,\ef)$ and $(Y,\gf)$, with $\gf=\alpha(\ef)$, and $F\subseteq Y$. Then for each $n\in \N$, it holds that
\[
\mathcal{N}_{\Gamma_n}(\alpha^{-1}(F))=\mathcal{N}_{\Delta_n}(F).
\]
\end{prop}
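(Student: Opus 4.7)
The plan is to deduce the cardinality identity by combining Lemma \ref{lema:1} with the defining relation $\gf=\alpha(\ef)$ that comes from the Main hypotheses. In full, $\gf=\alpha(\ef)$ means that for every $n\in \N$, level $\Delta_n$ is precisely the family $\{\alpha(A):A\in \Gamma_n\}$, viewed as an indexed family (so repetitions are respected, in line with the convention stated right after the definition of a GF-space). Under this reading, the assignment $A\mapsto \alpha(A)$ is a bijection between $\Gamma_n$ and $\Delta_n$, and the proposition reduces to showing that this bijection restricts to a bijection between the two subfamilies $\mathcal{A}_{\Gamma_n}(\alpha^{-1}(F))$ and $\mathcal{A}_{\Delta_n}(F)$.

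First, I would observe that the bijection $A\mapsto \alpha(A)$ immediately sends $\mathcal{A}_{\Gamma_n}(\alpha^{-1}(F))$ into $\mathcal{A}_{\Delta_n}(F)$ by the forward implication of Lemma \ref{lema:1}: if $A\in \Gamma_n$ meets $\alpha^{-1}(F)$, then $\alpha(A)\in \Delta_n$ meets $F$. Conversely, given any $B\in \mathcal{A}_{\Delta_n}(F)$, the fact that $\Delta_n=\alpha(\Gamma_n)$ provides some $A\in \Gamma_n$ with $B=\alpha(A)$; the backward implication of Lemma \ref{lema:1} then forces $A\cap \alpha^{-1}(F)\neq \emptyset$, so $A\in \mathcal{A}_{\Gamma_n}(\alpha^{-1}(F))$ is a pre-image of $B$. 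Taking cardinalities on both sides yields the desired equality $\mathcal{N}_{\Gamma_n}(\alpha^{-1}(F))=\mathcal{N}_{\Delta_n}(F)$.

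The one point that requires a touch of care, and is really the only substantive step, is the bookkeeping of multiplicities: if two distinct elements $A,A'\in \Gamma_n$ happened to satisfy $\alpha(A)=\alpha(A')$, we must not collapse them when counting $\mathcal{N}_{\Delta_n}(F)$. This is exactly what the stated convention on repeated sets in levels of a fractal structure, together with the reading $\gf=\alpha(\ef)$ as an equality of indexed families, is designed to guarantee; with this convention the proof reduces to the bijection described above and no further analysis is needed. I would simply state this interpretation explicitly at the start of the argument and then invoke Lemma \ref{lema:1} in both directions to conclude.
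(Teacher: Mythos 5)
Your argument is correct and is essentially the proof the paper intends: the paper simply states that Lemma \ref{lema:1} yields the result, the implicit reasoning being exactly your bijection $A\mapsto \alpha(A)$ between $\Gamma_n$ and $\Delta_n=\alpha(\Gamma_n)$ restricted to the subfamilies meeting $\alpha^{-1}(F)$ and $F$ respectively. Your explicit remark on multiplicities, justified by the paper's stated convention that sets may repeat within a level, is a welcome clarification but does not change the approach.
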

As a consequence from Proposition~\ref{prop:num1}, the calculation of the fractal dimension I of $F\subseteq Y$ can be dealt with in terms of the fractal dimension I of its pre-image $\alpha^{-1}(F)\subseteq X$ via $\alpha$ as the following result highlights.

\begin{theorem}\label{teo:dim1-to-R}
Let $\alpha:X\to Y$ be a function between a pair of GF-spaces, $(X,\ef)$ and $(Y,\gf)$, with $\gf=\alpha(\ef)$, and $F\subseteq Y$. Then the (lower/upper) fractal dimension I of $F$ (calculated with respect to $\gf$) equals the (lower/upper) fractal dimension I of $\alpha^{-1}(F)$ (calculated with respect to $\ef$). In particular, if $\dim_{\gf}^1(F)$ exists, then $\dim_{\ef}^1(\alpha^{-1}(F))$ also exists (and reciprocally), and it holds that 
\[
\dim_{\gf}^1(F)=\uno(\alpha^{-1}(F)).
\] 
\end{theorem}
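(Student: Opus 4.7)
The plan is to derive this theorem directly from Proposition~\ref{prop:num1}. Observe first that the definition of fractal dimension I depends solely on the counting sequence $\{\mathcal{N}_n(\cdot)\}_{n\in\N}$ and on the level index $n$; no metric information (and in particular no diameter) enters the defining quotient
\[
\frac{\log \mathcal{N}_n(F)}{n\log 2}.
\]
Because of this, for the present theorem I would not need to invoke the diameter condition~\eqref{hypo:diam} from Main Hypotheses~\ref{hypo:main}; the sole structural ingredient at play is the compatibility assumption $\gf=\alpha(\ef)$, which is what allows Proposition~\ref{prop:num1} to apply.

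First, I would simply apply Proposition~\ref{prop:num1} at each level $n\in\N$ to obtain the level-by-level identity
\[
\mathcal{N}_{\Gamma_n}(\alpha^{-1}(F))=\mathcal{N}_{\Delta_n}(F),
\]
and then divide by $n\log 2$ after taking logarithms, which yields equality of the defining quotients for fractal dimension I for every $n\in\N$.

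Second, I would take $\liminf$ and $\limsup$ as $n\to\infty$ on both sides. Since the two numerical sequences are equal term by term, both pairs of extremal limits agree; this settles the lower/upper part of the claim. The existence clause is then immediate: the ordinary limit on one side exists if and only if it does on the other, and in that case the equality $\dim_{\gf}^{1}(F)=\uno(\alpha^{-1}(F))$ is forced.

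I do not expect any real obstacle here, since the entire combinatorial content has already been absorbed into Lemma~\ref{lema:1} and Proposition~\ref{prop:num1}. This theorem should rather be viewed as the easiest case of a general scheme: the more delicate analogues for fractal dimensions II, III, IV, and V will genuinely require the diameter hypothesis~\eqref{hypo:diam}, whereas dimension I is insensitive to how $\alpha$ distorts diameters and only cares that $\alpha$ transports the covering $\Gamma_n$ onto $\Delta_n$.
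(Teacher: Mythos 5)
Your proposal is correct and takes essentially the same route as the paper, which states this theorem as an immediate consequence of Proposition~\ref{prop:num1}: the level-by-level identity $\mathcal{N}_{\Gamma_n}(\alpha^{-1}(F))=\mathcal{N}_{\Delta_n}(F)$ makes the defining quotients coincide for every $n$, so the lower and upper limits (and hence existence of the limit) agree. Your remark that the diameter condition of Main hypotheses~\ref{hypo:main} is not needed here is also consistent with the paper, whose statement of this theorem assumes only $\gf=\alpha(\ef)$.
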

Interestingly, a first connection between the box dimension of $F\subseteq Y$ and the fractal dimension I of its pre-image via $\alpha$, $\alpha^{-1}(F)\subseteq X$, can be stated in the Euclidean setting.
\begin{theorem}\label{teo:bc=uno-to-R}
Let $F\subseteq [0,1]^d$, $\gf$ the natural fractal structure on $F$, and $\alpha:X\to [0,1]^d$ a function between the GF-spaces $(X,\ef)$ and $([0,1]^d,\gf)$, where $\gf=\alpha(\ef)$. Then the (lower/upper) box dimension of $F$ equals the (lower/upper) fractal dimension I of $\alpha^{-1}(F)$ (calculated with respect to $\ef$). In particular, if $\bc(F)$ exists, then $\dim_{\ef}^{1}(\alpha^{-1}(F))$ also exists (and reciprocally), and it holds that 
\[
\bc(F)=\dim_{\ef}^{1}(\alpha^{-1}(F)).
\] 
\end{theorem}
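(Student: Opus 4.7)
The plan is to derive Theorem \ref{teo:bc=uno-to-R} by concatenating two already established identities. First, since $\gf$ is by hypothesis the natural fractal structure on the Euclidean subset $F\subseteq [0,1]^d\subseteq \R^d$, Theorem \ref{teo:previo}(\ref{teo:bc=uno}) directly provides
\[
\bc(F)=\dim_{\gf}^{1}(F),
\]
and the proof of that theorem in \cite{DIM1} proceeds by comparing the two defining limits level by level, so the same identity persists for the lower limits and for the upper limits separately.

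Second, the only hypothesis required by Theorem \ref{teo:dim1-to-R} is $\gf=\alpha(\ef)$, which holds by assumption. Applying it to $F\subseteq Y=[0,1]^d$ yields
\[
\dim_{\gf}^{1}(F)=\dim_{\ef}^{1}(\alpha^{-1}(F)),
\]
again in its refined one-sided form, since the argument ultimately rests on the exact level-wise counting identity $\mathcal{N}_{\Gamma_n}(\alpha^{-1}(F))=\mathcal{N}_{\Delta_n}(F)$ supplied by Proposition \ref{prop:num1}, an identity which is insensitive to whether one takes $\liminf$ or $\limsup$ in the definition of fractal dimension I.

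Chaining the two displayed equalities yields $\bc(F)=\dim_{\ef}^{1}(\alpha^{-1}(F))$ for both the lower and the upper variants, from which the equivalence between the existence of $\bc(F)$ and the existence of $\dim_{\ef}^{1}(\alpha^{-1}(F))$ follows immediately (both sides exist exactly when the corresponding lower and upper quantities agree, and each of the two identities equates them separately). I do not foresee any substantive obstacle: the statement is essentially an assembly of Theorem \ref{teo:previo}(\ref{teo:bc=uno}) (Euclidean box dimension equals fractal dimension I under the natural fractal structure) with Theorem \ref{teo:dim1-to-R} (fractal dimension I transports across any $\alpha$ satisfying $\gf=\alpha(\ef)$), and the only verification one has to perform is the compatibility of the two hypotheses, which is precisely the ambient assumption that $\gf$ is simultaneously the natural fractal structure on $F$ and coincides with $\alpha(\ef)$.
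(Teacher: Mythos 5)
Your proposal is correct and follows exactly the paper's own argument: first invoke Theorem \ref{teo:previo}(\ref{teo:bc=uno}) to get $\bc(F)=\dim_{\gf}^{1}(F)$ since $\gf$ is the natural fractal structure, then apply Theorem \ref{teo:dim1-to-R} to transport fractal dimension I across $\alpha$. The additional remarks on the one-sided (lower/upper) versions and the existence equivalence are consistent with what the paper's statement asserts.
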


\begin{proof}
First, we have $\bc(F)=\dim_{\gf}^{1}(F)$, since $\gf$ is the natural fractal structure on $F\subseteq [0,1]^d$ (c.f.~Theorem \ref{teo:previo} (\ref{teo:bc=uno})). Thus, just apply Theorem \ref{teo:dim1-to-R} to get the result.
\end{proof}
Similarly to Theorem \ref{teo:dim1-to-R}, the following result stands for fractal dimension II.
\begin{theorem}\label{teo:dim2-to-R}
Let $F\subseteq Y$.
Under Main hypotheses \ref{hypo:main}, it holds that the (lower/upper) fractal dimension II of $F$ (calculated with respect to $\gf$) equals the (lower/upper) fractal dimension II of $\alpha^{-1}(F)$ (calculated with respect to $\ef$) multiplied by $d$. In particular, if $\dim_{\gf}^2(F)$ exists, then $\dim_{\ef}^{2}(\alpha^{1}(F))$ also exists (and reciprocally), and it holds that
\[
\dim_{\gf}^2(F)=d\cdot \dim_{\ef}^{2}(\alpha^{-1}(F)).
\]
\end{theorem}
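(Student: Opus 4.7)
My plan is to reduce both sides of the claimed identity to expressions involving only $\ef$, via Proposition~\ref{prop:num1} and the diameter identity \eqref{hypo:diam}. First I would unpack $\gf=\alpha(\ef)$, so that $\Delta_n=\{\alpha(A):A\in \Gamma_n\}$, and combine this with Lemma~\ref{lema:1} to identify
\[
\mathcal{A}_{\Delta_n}(F)=\{\alpha(A):A\in \mathcal{A}_{\Gamma_n}(\alpha^{-1}(F))\}.
\]
Proposition~\ref{prop:num1} already gives $\mathcal{N}_{\Delta_n}(F)=\mathcal{N}_{\Gamma_n}(\alpha^{-1}(F))$, so the numerators in the definitions of $\dim_{\gf}^2(F)$ and $\dos(\alpha^{-1}(F))$ coincide for every $n$.

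The key step is to match the denominators. From the identification above,
\[
\dm(F,\Delta_n)=\sup\{\dm(\alpha(A)):A\in \mathcal{A}_{\Gamma_n}(\alpha^{-1}(F))\},
\]
and applying \eqref{hypo:diam} termwise together with the fact that $t\mapsto (ct)^{1/d}$ is strictly increasing (the positivity of $c$ is forced by non-negativity of diameters, assuming $d>0$), the supremum commutes with this transformation and yields
\[
\dm(F,\Delta_n)^d=c\cdot \dm(\alpha^{-1}(F),\Gamma_n).
\]
Taking logarithms,
\[
-\log \dm(F,\Delta_n)=-\tfrac{\log c}{d}-\tfrac{1}{d}\log \dm(\alpha^{-1}(F),\Gamma_n).
\]

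Substituting into the definition of fractal dimension II, I would obtain
\[
\frac{\log \mathcal{N}_{\Delta_n}(F)}{-\log \dm(F,\Delta_n)}=\frac{d\cdot \log \mathcal{N}_{\Gamma_n}(\alpha^{-1}(F))}{-\log c-\log \dm(\alpha^{-1}(F),\Gamma_n)}.
\]
Fractal dimension II is meaningful precisely when $\dm(\alpha^{-1}(F),\Gamma_n)\to 0$ (equivalently, by the displayed identity, when $\dm(F,\Delta_n)\to 0$), and in that regime the additive constant $-\log c$ is asymptotically negligible in the denominator. Taking the (lower/upper) limit as $n\to\infty$ therefore delivers $\dim_{\gf}^2(F)=d\cdot \dos(\alpha^{-1}(F))$, and the very same equality of sequences shows that the (lower/upper) limit exists on one side if and only if it does on the other. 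The only mildly delicate point is the sup-preservation step, which is clean here because both $c$ and $d$ are positive, making the transformation between diameters strictly monotone; everything else amounts to bookkeeping with logs.
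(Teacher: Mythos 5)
Your proposal is correct and follows essentially the same route as the paper's own proof: Proposition~\ref{prop:num1} to equate the numerators, the diameter identity \eqref{hypo:diam} pushed through the supremum to get $\dm(F,\Delta_n)^d=c\cdot\dm(\alpha^{-1}(F),\Gamma_n)$, and the observation that the resulting additive constant $\log c$ is negligible in the limit. Your explicit remarks on the monotonicity needed to commute the supremum with $t\mapsto (ct)^{1/d}$ and on the regime $\dm(F,\Delta_n)\to 0$ are points the paper leaves implicit, but the argument is the same.
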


\begin{proof}
First of all, for all $A\in \Gamma_n$, it holds that $c\cdot \dm(A)=\dm(\alpha(A))^d$ for some $c\neq 0$ and $d\in \R$ (c.f. Eq.~(\ref{hypo:diam})). Hence,
\begin{align*}
c\cdot \dm(\alpha^{-1}(F),\Gamma_n)&=c\cdot \sup\{\dm(A):A\in \Gamma_n,A\cap \alpha^{-1}(F)\neq \emptyset\}\\
&=\sup\{\dm(\alpha(A))^d:\alpha(A)\in \Delta_n,\alpha(A)\cap F\neq \emptyset\}\\
&=\dm(F,\Delta_n)^d \text{ for all } n\in \N.
\end{align*}
Thus, it holds that
\begin{equation}\label{eq:den2}
\dm(\alpha^{-1}(F),\Gamma_n)=\frac{1}{c}\cdot \dm(F,\Delta_n)^d \text{ for all } n\in \N.
\end{equation}
Accordingly,
\begin{align*}
\lim_{n\to \infty}\frac{\log \mathcal{N}_{\Gamma_n}(\alpha^{-1}(F))}{-\log \dm(\alpha^{-1}(F),\Gamma_n)}&=\lim_{n\to \infty}\frac{\log \mathcal{N}_{\Delta_n}(F)}{-\log \frac 1c\, \dm(F,\Delta_n)^d}\\
&=\frac 1d\cdot\lim_{n\to \infty}\frac{\log \mathcal{N}_{\Delta_n}(F)}{-\log \dm(F,\Delta_n)},
\end{align*}
where $\lim$ refers to the corresponding lower/upper limit. Notice also that both Eq.~(\ref{eq:den2}) and Proposition \ref{prop:num1} have been applied to deal with the second equality.
\end{proof}
Additionally, the following result for fractal dimension II stands similarly to Theorem~\ref{teo:bc=uno-to-R}.
\begin{theorem}\label{teo:bc=dos-to-R}
Let $F\subseteq [0,1]^d$, $\gf$ the natural fractal structure on $F$, and $\alpha:X\to [0,1]^d$ a function between the GF-spaces $(X,\ef)$ and $([0,1]^d,\gf)$, where $\gf=\alpha(\ef)$. 
Under Main hypotheses \ref{hypo:main}, the (lower/upper) box dimension of $F$ equals the (lower/upper) fractal dimension II of $\alpha^{-1}(F)$ (calculated with respect to $\ef$). In particular, if $\bc(F)$ exists, then $\dim_{\ef}^{2}(\alpha^{-1}(F))$ also exists (and reciprocally), and it holds that 
\[
\bc(F)=d\cdot \dim_{\ef}^{2}(\alpha^{-1}(F)).
\] 
\end{theorem}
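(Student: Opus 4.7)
The plan is to mirror the two-line proof of Theorem \ref{teo:bc=uno-to-R}, simply substituting the dimension-II analogues of the two ingredients used there. Since $\gf$ is by hypothesis the natural fractal structure on $F\subseteq [0,1]^d$, Theorem \ref{teo:previo} (\ref{teo:bc=dos}) immediately gives $\bc(F)=\dim_{\gf}^{2}(F)$, simultaneously for the lower and upper variants. Then, because Main hypotheses \ref{hypo:main} are in force, Theorem \ref{teo:dim2-to-R} applies verbatim and yields $\dim_{\gf}^{2}(F)=d\cdot \dim_{\ef}^{2}(\alpha^{-1}(F))$, again in both lower and upper flavours. Concatenating the two identities produces the claim $\bc(F)=d\cdot \dim_{\ef}^{2}(\alpha^{-1}(F))$, and the ``in particular'' clause about existence follows because the coincidence of lower and upper values transfers through both equalities.

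There is essentially no obstacle: all of the genuine work---notably the translation $\dm(\alpha^{-1}(F),\Gamma_n)=\tfrac{1}{c}\cdot \dm(F,\Delta_n)^d$ that harnesses Eq.~(\ref{hypo:diam}) to absorb the exponent $d$ into the $\log$-denominators of fractal dimension II---has already been performed inside Theorem \ref{teo:dim2-to-R}. The only point worth double-checking before writing the proof down is that the ``(lower/upper)'' conventions in Theorems \ref{teo:previo} (\ref{teo:bc=dos}) and \ref{teo:dim2-to-R} are perfectly compatible, so that the existence clause is a genuine corollary rather than something requiring a separate argument; this is immediate from inspecting the statements, since both are formulated uniformly for the two one-sided versions.
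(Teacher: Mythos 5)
Your proposal is correct and coincides with the paper's own proof: the paper likewise chains $\bc(F)=\dim_{\gf}^2(F)$ (from Theorem \ref{teo:previo} (\ref{teo:bc=dos}), using that $\gf$ is the natural fractal structure) with $\dim_{\gf}^2(F)=d\cdot \dim_{\ef}^{2}(\alpha^{-1}(F))$ (from Theorem \ref{teo:dim2-to-R}). No gaps; the existence clause follows exactly as you describe.
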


\begin{proof}
The result follows immediately since
\[
\bc(F)=\dim_{\gf}^2(F)=d\cdot \dim_{\ef}^2(\alpha^{-1}(F)),
\]
where the first identity holds since $\gf$ is the natural fractal structure on $F$ (c.f.~Theorem \ref{teo:previo} (\ref{teo:bc=dos})) and the second equality stands by previous Theorem \ref{teo:dim2-to-R}.
\end{proof}
According to the previous result, the box dimension of $F\subseteq [0,1]^d$ may be calculated by the fractal dimension II of $\alpha^{-1}(F)\subseteq [0,1]$ (calculated with respect to $\ef$).
As such, the following result stands in the Euclidean setting as a consequence of Theorem \ref{teo:bc=dos-to-R}.

\begin{theorem}\label{teo:eu-bc-to-R}
Let $F\subseteq [0,1]^d$ and $\alpha:[0,1]\to [0,1]^d$ be a function between the GF-spaces $([0,1],\ef)$, where $\Gamma_n=\{[k\cdot 2^{-nd},(1+k)\cdot 2^{-nd}]:k=0,1,\ldots,2^{nd}-1\}$ are the levels of $\ef$, and $([0,1]^d,\gf)$, where $\gf$ is the natural fractal structure on $[0,1]^d$ and such that $\alpha(\ef)=\gf$. It holds that the (lower/upper) box dimension of $F$ equals the (lower/upper) box dimension of $\alpha^{-1}(F)$. In particular, if $\bc(F)$ exists, then $\bc(\alpha^{-1}(F))$ also exists (and reciprocally), and it holds that
\[
\bc(F)=d\cdot \bc(\alpha^{-1}(F)).
\]
\end{theorem}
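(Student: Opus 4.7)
The plan is to reduce the claim to Theorem~\ref{teo:bc=dos-to-R}, and then to identify the fractal dimension II of $\alpha^{-1}(F)$ with respect to $\ef$ with its classical box dimension. First I would verify that Main hypotheses~\ref{hypo:main} are satisfied with the exponent $d$ equal to the ambient dimension. For each $A\in\Gamma_n$ we have $\dm(A)=2^{-nd}$, while $\alpha(A)\in\Delta_n$ is a $d$-dimensional cube of side $2^{-n}$, so its Euclidean diameter is $\sqrt{d}\cdot 2^{-n}$. Hence
\[
\dm(\alpha(A))^d=d^{d/2}\cdot 2^{-nd}=d^{d/2}\cdot \dm(A),
\]
and the Main hypotheses hold with $c=d^{d/2}$. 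Theorem~\ref{teo:bc=dos-to-R} then yields $\bc(F)=d\cdot \dos(\alpha^{-1}(F))$ together with the corresponding biconditional for existence.

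It remains to prove that $\dos(\alpha^{-1}(F))=\bc(\alpha^{-1}(F))$, both as lower/upper limits and as regards existence. Since every element of $\Gamma_n$ has diameter $2^{-nd}$, we have $\dm(\alpha^{-1}(F),\Gamma_n)=2^{-nd}$ and $\mathcal{N}_{\Gamma_n}(\alpha^{-1}(F))$ is precisely the number of dyadic intervals of length $2^{-nd}$ in $[0,1]$ meeting $\alpha^{-1}(F)$, which coincides with the standard covering number $\mathcal{N}_{2^{-nd}}(\alpha^{-1}(F))$. Thus $\dos(\alpha^{-1}(F))$ equals the lower/upper limit of $\log\mathcal{N}_{2^{-nd}}(\alpha^{-1}(F))/(nd\log 2)$. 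A standard sandwich argument for $\delta\in[2^{-(n+1)d},2^{-nd}]$, using the monotonicity of $\mathcal{N}_\delta$ in $\delta$ and the fact that the ratio $(n+1)d/(nd)\to 1$, then shows that this subsequence already captures the lower/upper box dimension, so the two quantities agree. Combining this with the output of Theorem~\ref{teo:bc=dos-to-R} delivers $\bc(F)=d\cdot\bc(\alpha^{-1}(F))$.

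The main obstacle is the subsequence identification in the last step: $\ef$ is not the natural fractal structure on $[0,1]$, since its levels sit only at scales $2^{-nd}$, so one cannot invoke Theorem~\ref{teo:previo}(ii) directly. Because the gap between consecutive scales is a fixed factor $2^d$, however, the usual subsequence argument for box dimension goes through verbatim, the only mild subtlety being that it must be carried out simultaneously for lower and upper limits in order to recover both directions of the biconditional.
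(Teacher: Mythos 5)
Your proposal is correct, and its first two steps coincide with the paper's proof: you verify Main hypotheses~\ref{hypo:main} with exactly the same data ($\dm(A)=2^{-nd}$, $\dm(\alpha(A))=\sqrt{d}\cdot 2^{-n}$, $c=d^{d/2}$) and then invoke Theorem~\ref{teo:bc=dos-to-R} to obtain $\bc(F)=d\cdot\dos(\alpha^{-1}(F))$. Where you diverge is in the final identification $\dos(\alpha^{-1}(F))=\bc(\alpha^{-1}(F))$: the paper gets this by citing Theorem~\ref{teo:k-cond->bc=dos}, after observing that $\ef$ satisfies the $\kappa$-condition with $\kappa=3$ and that $\dm(\alpha^{-1}(F),\Gamma_n)\to 0$, whereas you argue directly that all elements of $\Gamma_n$ have the common diameter $2^{-nd}$, so $\mathcal{N}_{\Gamma_n}(\alpha^{-1}(F))$ is the grid covering number at scale $2^{-nd}$ and $\dos(\alpha^{-1}(F))$ is the box-counting quotient evaluated along the geometric subsequence $\delta_n=2^{-nd}$, which the standard sandwich argument (consecutive scales differing by the fixed factor $2^d$) upgrades to the full lower/upper box dimension. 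Both routes are valid; the paper's is shorter because it leans on a quoted result, while yours is more self-contained and, as you note, handles the lower and upper limits explicitly --- a point worth making since Theorem~\ref{teo:k-cond->bc=dos} as stated in the paper presupposes the existence of the box dimension, so your direct argument actually covers the lower/upper clause of the statement more transparently.
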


\begin{proof}
Note that Main hypotheses \ref{hypo:main} is satisfied since $\alpha(\ef)=\gf$ and $\dm(A)=2^{-nd}$ and $\dm(\alpha(A))=2^{-n}\cdot \sqrt{d}$ for all $A\in \Gamma_n$ and $n\in \N$. In fact, we can take $c=d^{d/2}$ with $d$ being the embedding dimension.  
Hence, we have $\bc(F)=d\cdot \dos(\alpha^{-1}(F))$ for all $F\subseteq [0,1]^d$ due to Theorem \ref{teo:bc=dos-to-R}. 
In addition, it is worth pointing out that
\begin{itemize}
\item $\ef$ satisfies the $\kappa-$condition for $\kappa=3$.
\item $\dm(F,\Gamma_n)\to 0$ since $\dm(A)=2^{-nd}$ for all $A\in \Gamma_n$ and $n\in \N$.
\end{itemize}
Hence, Theorem \ref{teo:k-cond->bc=dos} gives $\dos(\alpha^{-1}(F))=\bc(\alpha^{-1}(F))$.
\end{proof}
Next step is to prove a similar result to both Theorems \ref{teo:dim1-to-R} and \ref{teo:dim2-to-R} for fractal dimension III. Firstly, we have the following
\begin{prop}
Under Main hypotheses~\ref{hypo:main}, the next identity stands:
\begin{equation}\label{eq:10}
\mathcal{H}_3^s(\alpha^{-1}(F))=\frac{1}{c^s}\cdot \mathcal{H}_3^{ds}(F)\text{ for all } s\geq 0.
\end{equation} 
\end{prop}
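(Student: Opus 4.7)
The strategy is to unwind the definition of $\mathcal{H}_3^s$ level by level and reduce everything to the level-wise correspondence already established in Lemma~\ref{lema:1} combined with the diameter identity $\dm(\alpha(A))^d=c\cdot\dm(A)$.

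First I would fix $n\in\N$ and $l\geq n$ and observe that, because $\gf=\alpha(\ef)$, the assignment $A\mapsto \alpha(A)$ maps $\Gamma_l$ onto $\Delta_l$ (remembering that repetitions are allowed in the levels, so this is a genuine bijection of indexed families rather than of underlying sets). By Lemma~\ref{lema:1}, $A\cap\alpha^{-1}(F)\neq\emptyset$ if and only if $\alpha(A)\cap F\neq\emptyset$, so this bijection restricts to a bijection between $\mathcal{A}_{\Gamma_l}(\alpha^{-1}(F))$ and $\mathcal{A}_{\Delta_l}(F)$.

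Next, using the main hypothesis in the form $\dm(A)^s=\frac{1}{c^s}\dm(\alpha(A))^{ds}$, I would compute
\[
\sum_{A\in\mathcal{A}_{\Gamma_l}(\alpha^{-1}(F))}\dm(A)^s=\frac{1}{c^s}\sum_{A\in\mathcal{A}_{\Gamma_l}(\alpha^{-1}(F))}\dm(\alpha(A))^{ds}=\frac{1}{c^s}\sum_{B\in\mathcal{A}_{\Delta_l}(F)}\dm(B)^{ds},
\]
where the last equality uses the bijection from the previous paragraph. Taking the infimum over $l\geq n$ on both sides yields
\[
\mathcal{H}_{n,3}^s(\alpha^{-1}(F))=\frac{1}{c^s}\,\mathcal{H}_{n,3}^{ds}(F),
\]
and letting $n\to\infty$ gives the desired identity \eqref{eq:10}.

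I do not anticipate a serious obstacle: the content of the statement is entirely absorbed by the hypothesis $\gf=\alpha(\ef)$ (so covers of $F$ in $\Delta_l$ and covers of $\alpha^{-1}(F)$ in $\Gamma_l$ are in a one-to-one correspondence via $\alpha$) and by the scaling relation on diameters. The only mild subtlety worth flagging is that the definition of $\mathcal{H}_{n,3}^s$ restricts the admissible covers to those of the specific form $\mathcal{A}_l(F)$ with $l\geq n$ (rather than arbitrary finite/countable sub-covers drawn from levels $\geq n$), which is precisely what makes the bijection argument work cleanly and removes the need for any additional optimization step.
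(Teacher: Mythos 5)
Your proof is correct and follows essentially the same route as the paper's: both use the diameter identity in the form $\dm(A)^s=\frac{1}{c^s}\dm(\alpha(A))^{ds}$ together with Lemma~\ref{lema:1} to match the level-$l$ sums term by term, obtain $\mathcal{H}_{n,3}^s(\alpha^{-1}(F))=\frac{1}{c^s}\mathcal{H}_{n,3}^{ds}(F)$, and pass to the limit in $n$. Your explicit remarks on the multiset bijection $\Gamma_l\to\Delta_l$ and on the restricted form of the admissible covers in $\mathcal{A}_{n,3}$ are points the paper leaves implicit, but they do not change the argument.
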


\begin{proof}
The Main hypotheses~\ref{hypo:main} give that $\dm(\alpha(A))^{d}=c\cdot \dm(A)$ for some $d$ and $c\neq 0$. Thus,
\begin{equation}\label{eq:dms}
\dm(A)^s=\frac{1}{c^s}\cdot \dm(\alpha(A))^{ds}.
\end{equation}
Hence, for all $s\geq 0$, we have
\begin{align*}
\sum\{\dm(A)^s:A\in \Gamma_n, A\cap \alpha^{-1}(F)\neq \emptyset\}  = \\
\frac{1}{c^s}\cdot \sum\{\dm(\alpha(A))^{ds}:A\in \Gamma_n, \alpha(A)\cap F\neq \emptyset\},
\end{align*}
where the equality is due to Eq.~(\ref{eq:dms}) and also by applying Lemma~\ref{lema:1}. Therefore $\mathcal{H}_{n,3}^s(\alpha^{-1}(F))=\frac{1}{c^s}\cdot \mathcal{H}_{n,3}^{ds}(F)$. The result follows by letting $n\to \infty$.
\end{proof}
Hence, we have the expected
\begin{theorem}\label{teo:dim3-to-R}
Let $F\subseteq Y$. Under Main hypotheses~\ref{hypo:main}, it holds that 
\[
\dim_{\gf}^3(F)=d\cdot \dim_{\ef}^3(\alpha^{-1}(F)).
\] 
\end{theorem}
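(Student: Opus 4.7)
The proposition immediately preceding the theorem already delivers the key analytic identity
\[
\mathcal{H}_3^s(\alpha^{-1}(F)) = \frac{1}{c^s}\cdot \mathcal{H}_3^{ds}(F) \quad \text{for all } s\geq 0,
\]
so the proof reduces to tracking how the critical exponent transforms. The plan is therefore to pass from this identity to the corresponding relation between critical points of the two measures, which is the defining expression for fractal dimension III.

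The first step is to observe that, since $c\neq 0$ (so $c^s$ is a finite positive constant for every $s\geq 0$, once we note $c>0$ in the setting where $c^s$ must be interpreted, as is the case for the Euclidean value $c=d^{d/2}$), the identity implies that $\mathcal{H}_3^s(\alpha^{-1}(F))=0$ if and only if $\mathcal{H}_3^{ds}(F)=0$, and likewise $\mathcal{H}_3^s(\alpha^{-1}(F))=\infty$ if and only if $\mathcal{H}_3^{ds}(F)=\infty$. Hence the set of $s$ where the $3$-measure of $\alpha^{-1}(F)$ is infinite is exactly the rescaling by $1/d$ of the corresponding set for $F$.

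The second step is the change of variable $t=ds$ in the definition of fractal dimension III. Using the characterization
\[
\dim_{\ef}^3(\alpha^{-1}(F)) = \sup\{s\geq 0 : \mathcal{H}_3^s(\alpha^{-1}(F))=\infty\},
\]
and substituting, this becomes
\[
\sup\Bigl\{\tfrac{t}{d} : t\geq 0,\ \mathcal{H}_3^t(F)=\infty\Bigr\} = \frac{1}{d}\cdot\sup\{t\geq 0 : \mathcal{H}_3^t(F)=\infty\} = \frac{1}{d}\cdot\dim_{\gf}^3(F),
\]
which rearranges to the desired equality $\dim_{\gf}^3(F) = d\cdot \dim_{\ef}^3(\alpha^{-1}(F))$. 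One would present the symmetric computation using the $\inf$-characterization to confirm both sides of the critical-point definition agree.

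The only genuinely delicate point, and essentially the single place where any obstacle could arise, is the implicit positivity of the exponent $d$ appearing in the Main hypotheses: the change of variable $t=ds$ only produces an order-preserving bijection of $[0,\infty)$ with itself when $d>0$. In the intended applications (Euclidean embeddings) this is automatic, and it is natural to assume it throughout; otherwise the statement would need to be qualified. Granted $d>0$, the argument is a two-line consequence of the preceding proposition and the critical-point definition, with no further estimates needed.
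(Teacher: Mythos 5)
Your proof is correct and follows essentially the same route as the paper's: both arguments deduce the theorem directly from the identity $\mathcal{H}_3^s(\alpha^{-1}(F))=\frac{1}{c^s}\cdot\mathcal{H}_3^{ds}(F)$ of the preceding proposition together with the critical-point definition of fractal dimension III (the paper phrases it as two inequalities obtained from the vanishing sets, you as the change of variable $t=ds$ on the $\sup$-characterization, which is the same computation). Your observation that $d>0$ (and, for $c^s$ to make sense, $c>0$) is tacitly required for the rescaling to be order-preserving is a fair point that the paper leaves implicit.
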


\begin{proof}
Firstly, by Eq.~(\ref{eq:10}), it holds that 
\[
\mathcal{H}_3^s(\alpha^{-1}(F))=\frac{1}{c^s}\cdot \mathcal{H}_3^{ds}(F)\text{ for all } s\geq 0.
\]
Thus, $\mathcal{H}_3^s(\alpha^{-1}(F))=0$ implies $\mathcal{H}_3^{ds}(F)=0$. Therefore, $s\geq \frac 1d\cdot \dim_{\gf}^3(F)$ for all $s\geq \dim_{\ef}^3(\alpha^{-1}(F))$. In particular, we have
\begin{equation}\label{eq:2}
\dim_{\ef}^3(\alpha^{-1}(F))\geq \frac 1d\cdot \dim_{\gf}^3(F).	
\end{equation}
Conversely, $\mathcal{H}_3^{ds}(F)=0$ leads to $\mathcal{H}_3^s(\alpha^{-1}(F))=0$, also by Eq.~(\ref{eq:10}). Thus, $s\geq \dim_{\ef}^3(\alpha^{-1}(F))$ for all $s\geq \frac 1d\cdot \dim_{\gf}^3(F)$. Hence,
\begin{equation}\label{eq:3}
\frac 1d\cdot \dim_{\gf}^3(F)\geq \dim_{\ef}^3(\alpha^{-1}(F)). 	
\end{equation} 
The result follows due to both Eqs.~(\ref{eq:2}) and (\ref{eq:3}). 
\end{proof}
The following result regarding fractal dimension III stands similarly to Theorem~\ref{teo:bc=dos-to-R}.
\begin{theorem}\label{teo:bc-to-R-via-dim3}
Let $F\subseteq [0,1]^d$, $\gf$ be the natural fractal structure on $[0,1]^d$, and $\alpha:X\to [0,1]^d$ a function between the GF-spaces $(X,\ef)$ and $([0,1]^d,\gf)$ with $\gf=\alpha(\ef)$. 
Under Main hypotheses \ref{hypo:main}, if $\bc(F)$ exists, it holds that 
\[
\bc(F)=d\cdot \tres(\alpha^{-1}(F)).
\] 
\end{theorem}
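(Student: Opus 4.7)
The result should follow by the same two-step pattern that produced Theorem~\ref{teo:bc=dos-to-R} from Theorems~\ref{teo:previo} and~\ref{teo:dim2-to-R}. The plan is to chain the Euclidean identification of $\bc$ with fractal dimension III on the target side, together with the transfer identity from $\gf$ to $\ef$ provided by Theorem~\ref{teo:dim3-to-R}.

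More concretely, I would proceed as follows. First, since $\gf$ is the natural fractal structure on $[0,1]^d$ and $F\subseteq [0,1]^d$, Theorem~\ref{teo:previo}~(iii) gives
\[
\bc(F)=\dim_{\gf}^{3}(F),
\]
with the understanding that the fractal dimension III on the right is computed with respect to the natural fractal structure of $F$, which coincides (for the purpose of $\mathcal{H}_{n,3}^s$) with the family $\{\mathcal{A}_{\Delta_l}(F):l\geq n\}$ inherited from $\gf$, since the definition of $\mathcal{H}_{n,3}^s(F)$ involves only elements of levels that meet $F$.

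Next, Main hypotheses~\ref{hypo:main} are in force, so Theorem~\ref{teo:dim3-to-R} applies and yields
\[
\dim_{\gf}^{3}(F)=d\cdot \tres(\alpha^{-1}(F)),
\]
where the right-hand side is computed with respect to $\ef$. Chaining the two equalities delivers
\[
\bc(F)=d\cdot \tres(\alpha^{-1}(F)),
\]
exactly as required; the assumption that $\bc(F)$ exists is used only to ensure that the limit on the left is a genuine limit rather than a pair of lower/upper values.

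The only non-routine point is the opening identification $\bc(F)=\dim_{\gf}^{3}(F)$: one must check that the $\tres$ used in Theorem~\ref{teo:previo}~(iii), defined relative to the natural fractal structure \emph{on} $F$, agrees with $\dim_{\gf}^{3}(F)$ computed relative to $\gf$ restricted to elements meeting $F$. Since both versions use the same families $\anf$ at each level and the same diameters, the two quantities $\mathcal{H}_{n,3}^s$ coincide and hence so do the critical exponents. After that, everything is a formal substitution, and no further delicate estimate is needed.
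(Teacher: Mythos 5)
Your proposal is correct and follows essentially the same route as the paper: identify $\bc(F)=\dim_{\gf}^3(F)$ via Theorem~\ref{teo:previo}~(iii) because $\gf$ is the natural fractal structure, then apply Theorem~\ref{teo:dim3-to-R} under Main hypotheses~\ref{hypo:main}. Your extra care about the restriction of $\gf$ to elements meeting $F$ is a detail the paper leaves implicit, but it does not change the argument.
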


\begin{proof}
In fact, we have $\bc(F)=\dim_{\gf}^3(F)$ since $\gf$ is the natural fractal structure on $F$ (c.f.~Theorem \ref{teo:previo}.\ref{teo:bc=tres}). Finally, Theorem \ref{teo:dim3-to-R} gives the result.
\end{proof}
It is worth mentioning that Theorem \ref{teo:bc-to-R-via-dim3} implies that the box dimension of $F\subseteq [0,1]^d$ can be calculated throughout the fractal dimension III of $\alpha^{-1}(F)\subseteq [0,1]$ (calculated with respect to $\ef$).

\subsection{Calculating Hausdorff type dimensions in higher dimensional spaces}\label{sub:h-type}

Similarly to Lemma~\ref{lema:1}, the next implication stands.
\begin{lemma}\label{lema:2}
Let $\{A_i\}_{i\in I}$ be a collection of elements of $\ef$, and $F\subseteq Y$. Then
\[
\alpha^{-1}(F)\subseteq \cup_{i\in I}A_i\Rightarrow F\subseteq \cup_{i\in I}\alpha(A_i).
\]
\end{lemma}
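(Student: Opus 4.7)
The plan is to verify the inclusion pointwise. Pick any $y\in F$; the goal is to exhibit some $i\in I$ with $y\in\alpha(A_i)$.

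The only non-trivial ingredient is the existence of a pre-image of $y$ in $X$. This is where the standing hypothesis $\gf=\alpha(\ef)$ enters: since each level $\alpha(\Gamma_n)=\{\alpha(A):A\in\Gamma_n\}$ has to cover $Y$, one necessarily has $Y=\cup_{A\in\Gamma_n}\alpha(A)=\alpha(X)$, so $\alpha$ is surjective. In particular there exists $x\in X$ with $\alpha(x)=y$.

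Once such an $x$ is fixed, the argument mirrors the $(\Leftarrow)$ direction of Lemma~\ref{lema:1}. From $\alpha(x)=y\in F$ we get $x\in\alpha^{-1}(F)$, and then the hypothesis $\alpha^{-1}(F)\subseteq \cup_{i\in I}A_i$ yields an index $i_0\in I$ with $x\in A_{i_0}$. Applying $\alpha$ to this containment gives $y=\alpha(x)\in\alpha(A_{i_0})\subseteq \cup_{i\in I}\alpha(A_i)$. Since $y\in F$ was arbitrary, the desired inclusion follows. Note that neither the metric, the diameter identity~(\ref{hypo:diam}) of Main hypotheses~\ref{hypo:main}, nor any finer property of the fractal structures plays a role.

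The only real obstacle is recognising that without the surjectivity of $\alpha$ the implication would fail: a point $y\in F\setminus\alpha(X)$ could sit in $F$ while $\alpha^{-1}(F)$ is empty, making $\alpha^{-1}(F)\subseteq \cup A_i$ vacuously true but the conclusion false. This is precisely why the identification $\gf=\alpha(\ef)$, which forces $\alpha(X)=Y$, is essential; once it is in hand, the proof reduces to tracking one point through the preimage.
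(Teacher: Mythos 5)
Your argument is correct and coincides with the paper's own proof: both fix a point of $F$, use the surjectivity of $\alpha$ to pull it back to a point of $\alpha^{-1}(F)$, locate that point in some $A_{i_0}$, and push forward. You are in fact slightly more careful than the paper, which silently assumes $\alpha^{-1}(x)\neq\emptyset$, whereas you justify surjectivity from the standing identification $\gf=\alpha(\ef)$.
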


\begin{proof}
If $x\in F$, then let $y\in \alpha^{-1}(x)$ be such that $x=\alpha(y)$. Since $y\in \alpha^{-1}(F)\subseteq \cup_{i\in I}A_i$, then there exists $j\in I$ such that $y\in A_j$. Hence, $x=\alpha(y)\in \alpha(A_j):j\in I$, so $x\in \cup_{i\in I}\alpha(A_i)$. 
\end{proof}

\begin{prop}\label{prop:ds<s}
Under Main hypotheses~\ref{hypo:main}, the next inequality holds:
\begin{equation}\label{eq:1}
\frac{1}{c^s}\cdot \mathcal{H}_5^{ds}(F)\leq \mathcal{H}_5^s(\alpha^{-1}(F))\text{ for all } s\geq 0.
\end{equation} 
\end{prop}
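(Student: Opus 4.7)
The plan is to transfer a cover of $\alpha^{-1}(F)$ to a cover of $F$ via $\alpha$, control the sum of diameters to the $ds$th power using the Main hypotheses, and then take an infimum. The key observation is that Lemma \ref{lema:2} lets us push covers forward (rather than pulling them back), which is precisely why the inequality goes in the stated direction.

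In detail, I would fix $s \geq 0$ and $n \in \N$ and pick an arbitrary family $\{A_i\}_{i \in I} \in \mathcal{A}_{n,5}(\alpha^{-1}(F))$, so each $A_i$ lies in $\bigcup_{l \geq n} \Gamma_l$ and $\alpha^{-1}(F) \subseteq \bigcup_{i \in I} A_i$. By Lemma \ref{lema:2}, the family $\{\alpha(A_i)\}_{i \in I}$ covers $F$, and because $\gf = \alpha(\ef)$, if $A_i \in \Gamma_{l_i}$ with $l_i \geq n$ then $\alpha(A_i) \in \Delta_{l_i} \subseteq \bigcup_{l \geq n} \Delta_l$. Hence $\{\alpha(A_i)\}_{i \in I} \in \mathcal{A}_{n,5}(F)$, and it is an admissible cover for the definition of $\mathcal{H}_{n,5}^{ds}(F)$.

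Next, raising the identity in Eq.~(\ref{hypo:diam}) to the $s$th power yields $\dm(\alpha(A_i))^{ds} = c^s \cdot \dm(A_i)^s$ for each $i \in I$ (here I am tacitly using $c > 0$, which follows since $c$ is a ratio of non-negative diameter quantities). Summing over $i \in I$ gives
\[
\mathcal{H}_{n,5}^{ds}(F) \leq \sum_{i \in I} \dm(\alpha(A_i))^{ds} = c^s \sum_{i \in I} \dm(A_i)^s.
\]
Since $\{A_i\}_{i \in I} \in \mathcal{A}_{n,5}(\alpha^{-1}(F))$ was arbitrary, taking the infimum over all such families produces
\[
\mathcal{H}_{n,5}^{ds}(F) \leq c^s \cdot \mathcal{H}_{n,5}^s(\alpha^{-1}(F)).
\]
Finally, letting $n \to \infty$ and dividing by $c^s$ yields the desired Eq.~(\ref{eq:1}).

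The routine part is the algebraic manipulation of diameters; the only subtlety worth flagging is that we get an inequality rather than equality because Lemma \ref{lema:2} is a one-way implication: a cover of $\alpha^{-1}(F)$ by elements of $\ef$ always pushes forward to a cover of $F$, but not every cover of $F$ by elements of $\gf$ need arise this way. This asymmetry is the genuine obstacle to upgrading Eq.~(\ref{eq:1}) to an identity analogous to Eq.~(\ref{eq:10}), and is likely the reason why the proposition is stated as an inequality here (with the reverse inequality presumably requiring additional hypotheses on $\alpha$ such as surjectivity together with a mild regularity condition).
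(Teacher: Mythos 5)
Your proof is correct and follows essentially the same route as the paper's: push a cover of $\alpha^{-1}(F)$ forward to a cover of $F$ via Lemma~\ref{lema:2}, convert diameters using Eq.~(\ref{hypo:diam}), compare infima, and let $n\to\infty$. The only difference is presentational (you unfold the infimum via an arbitrary admissible cover, while the paper writes the inequality directly between the two infima), and your closing remarks on the sign of $c$ and the one-way nature of Lemma~\ref{lema:2} are accurate but not needed.
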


\begin{proof}
Let $s\geq 0$ and $n\in \N$.
First, it holds that $\dm(A)^s=\frac{1}{c^s}\cdot \dm(\alpha(A))^{ds}$ for each $A$ in some level $\geq n$ of $\ef$ (c.f.~Eq.~(\ref{hypo:diam})). Hence,
\begin{align*}
\mathcal{H}_{n,5}^{s}(\alpha^{-1}(F))&=\inf\left\{\sum_{i\in I}\dm(A_i)^s:A_i\in \Gamma_m:m\geq n, \alpha^{-1}(F)\subseteq \cup_{i\in I}A_i\right\}\\
&\geq\frac{1}{c^s}\cdot \inf\left\{\sum_{i\in I}\dm(\alpha(A_i))^{ds}:A_i\in \Gamma_m, m\geq n,F\subseteq \cup_{i\in I}\alpha(A_i)\right\}\\
&=\frac{1}{c^s}\cdot \mathcal{H}_{n,5}^{ds}(F).
\end{align*}
It is worth mentioning that Lemma~\ref{lema:2} has been applied in the inequality above. 
Letting $n\to \infty$, the result follows.
\end{proof}

\begin{theorem}\label{cor:<}
Under Main hypotheses~\ref{hypo:main}, it holds that 
\[
\dim_{\gf}^5(F)\leq d\cdot \dim_{\ef}^5(\alpha^{-1}(F)).
\]
\end{theorem}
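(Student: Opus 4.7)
The plan is to deduce the claimed dimension inequality directly from the measure-theoretic estimate contained in Proposition \ref{prop:ds<s}, mimicking the argument used for Theorem \ref{teo:dim3-to-R} but retaining only the direction that the one-sided inequality permits. Notice that, in contrast to Theorem \ref{teo:dim3-to-R} (where Eq.~(\ref{eq:10}) gave an exact identity coming from Lemma \ref{lema:1}), here Lemma \ref{lema:2} supplies only a one-sided covering statement, which is exactly what propagates to give only a one-sided bound.

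First, I would appeal to the critical-point characterization
$$
\dim_{\ef}^5(\alpha^{-1}(F))=\inf\{s\geq 0:\mathcal{H}_5^s(\alpha^{-1}(F))=0\}.
$$
Fix any $s>\dim_{\ef}^5(\alpha^{-1}(F))$, so that $\mathcal{H}_5^s(\alpha^{-1}(F))=0$. Then Eq.~(\ref{eq:1}) of Proposition \ref{prop:ds<s} yields
$$
\mathcal{H}_5^{ds}(F)\leq c^s\cdot \mathcal{H}_5^s(\alpha^{-1}(F))=0,
$$
so $\mathcal{H}_5^{ds}(F)=0$. By the critical-point characterization of $\dim_{\gf}^5(F)$, this forces $ds\geq \dim_{\gf}^5(F)$, i.e., $s\geq \tfrac{1}{d}\cdot\dim_{\gf}^5(F)$.

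Taking the infimum over all $s>\dim_{\ef}^5(\alpha^{-1}(F))$, I would conclude
$$
\dim_{\ef}^5(\alpha^{-1}(F))\geq \tfrac{1}{d}\cdot\dim_{\gf}^5(F),
$$
which, after multiplying both sides by $d$, is exactly the statement of the theorem.

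There is no substantive obstacle at this step: once Proposition \ref{prop:ds<s} is in hand, the reduction is a routine critical-point argument. The reason one cannot hope to upgrade the conclusion to an equality (as was possible for fractal dimension III in Theorem \ref{teo:dim3-to-R}) is that Lemma \ref{lema:2} provides only the forward implication $\alpha^{-1}(F)\subseteq \cup_i A_i\Rightarrow F\subseteq \cup_i\alpha(A_i)$; the converse can fail because a cover of $F$ by $\alpha$-images of elements of $\ef$ need not lift to a cover of $\alpha^{-1}(F)$ by those same elements. Consequently Proposition \ref{prop:ds<s} cannot be promoted to an equation between $\mathcal{H}_5^s(\alpha^{-1}(F))$ and $\tfrac{1}{c^s}\mathcal{H}_5^{ds}(F)$, and only one direction of the dimension identity survives. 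The genuine work was already absorbed into the proof of Proposition \ref{prop:ds<s}, where Lemma \ref{lema:2} and Eq.~(\ref{hypo:diam}) are combined.
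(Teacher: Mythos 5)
Your proposal is correct and follows essentially the same route as the paper: both use Proposition \ref{prop:ds<s} to deduce that $\mathcal{H}_5^{s}(\alpha^{-1}(F))=0$ forces $\mathcal{H}_5^{ds}(F)=0$, and then apply the critical-point characterization of fractal dimension V to all $s>\dim_{\ef}^5(\alpha^{-1}(F))$. Your closing remark about why Lemma \ref{lema:2} only yields a one-sided bound accurately reflects why the paper needs the extra hypotheses of Proposition \ref{prop:awk} for the reverse inequality.
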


\begin{proof}
Notice that $\mathcal{H}_5^{ds}(F)=0$ for all $s\geq 0$ such that $\mathcal{H}_5^s(\alpha^{-1}(F))=0$ (c.f.~Eq.~(\ref{eq:1})). Thus, $s\geq \frac 1d\cdot \dim_{\gf}^5(F)$ for all $s> \dim_{\ef}^5(\alpha^{-1}(F))$. It follows that $\dim_{\gf}^5(F)\leq d\cdot \dim_{\ef}^{5}(\alpha^{-1}(F))$.
\end{proof}
However, a reciprocal for Theorem~\ref{cor:<} becomes more awkward. To tackle with, let us introduce the following concept.
\begin{definition}\label{def:conds}
Let $\ef$ be a fractal structure on $X$. We shall understand that 
$\ef$ satisfies the finitely splitting property if there exists $\kappa'\in \N$ such that $\cd(\{A\in \Gamma_{n+1}:A\subseteq B\})\leq \kappa'$ for all $B\in \Gamma_n$ and all $n\in \N$.\label{def:k'-cond}
\end{definition}

\begin{prop}\label{prop:awk}
Let $F\subseteq Y$ and $s\geq 0$. Assume that $\gf$ is finitely splitting and satisfies the $\kappa-$condition. Under Main hypotheses~\ref{hypo:main}, it holds that $\mathcal{H}_5^{ds}(F)=0$ implies that $\mathcal{H}_5^s(\alpha^{-1}(F))=0$.
\end{prop}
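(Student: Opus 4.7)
The plan is to run Proposition~\ref{prop:ds<s} in reverse, producing an efficient cover of $\alpha^{-1}(F)$ in $\ef$ from an efficient cover of $F$ in $\gf$. Given $\eps>0$ and $n\in\N$, since $\mathcal{H}_5^{ds}(F)=0$ I pick a family $\{B_i\}_{i\in I}$ with $B_i\in \Delta_{l_i}$, $l_i\geq n$, $F\subseteq \bigcup_i B_i$, and $\sum_i \dm(B_i)^{ds}<\eps$.

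For each $i\in I$ I set $\mathcal{C}_i=\{A\in \Gamma_{l_i}: \alpha(A)\cap B_i\neq \emptyset\}$. An argument identical to Lemma~\ref{lema:1} (applied with $B_i\subseteq Y$ in place of $F$) gives $\mathcal{C}_i=\{A\in \Gamma_{l_i}: A\cap \alpha^{-1}(B_i)\neq \emptyset\}$, so $\bigcup_{A\in \mathcal{C}_i}A\supseteq \alpha^{-1}(B_i)$. Consequently $\bigcup_i \mathcal{C}_i\subseteq \bigcup_{l\geq n}\Gamma_l$ is an admissible family in $\mathcal{A}_{n,5}(\alpha^{-1}(F))$ covering $\alpha^{-1}(F)$, provided the total collection is countable, which will be clear once each $\mathcal{C}_i$ is seen to be finite in the next step.

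The crux is to bound $\sum_{A\in \mathcal{C}_i}\dm(A)^s$ by a constant multiple of $\dm(B_i)^{ds}$. By Eq.~(\ref{hypo:diam}), $\dm(A)^s=c^{-s}\dm(\alpha(A))^{ds}$, so it suffices to control $\sum_{A\in \mathcal{C}_i}\dm(\alpha(A))^{ds}$. The $\kappa$-condition applied to $B_i\in \Delta_{l_i}$ caps the number of \emph{distinct} members of $\Delta_{l_i}$ that meet $B_i$, giving $\cd(\{\alpha(A):A\in \mathcal{C}_i\})\leq \kappa$. The finitely splitting hypothesis on $\gf$ then comes in twice: first, to bound the fiber multiplicity $\cd(\{A\in \Gamma_{l_i}:\alpha(A)=C\})$ for each such $C\in \Delta_{l_i}$, and second, to bound the ratio $\dm(C)/\dm(B_i)$, both by constants depending only on $\kappa'$, through a descent argument on the levels of $\gf$ exploiting its bounded branching. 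Combining these estimates yields $\sum_{A\in \mathcal{C}_i}\dm(A)^s\leq K\,\dm(B_i)^{ds}$ for a universal constant $K=K(\kappa,\kappa',c,s)$.

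Summing over $i$, $\mathcal{H}_{n,5}^s(\alpha^{-1}(F))\leq \sum_i\sum_{A\in \mathcal{C}_i}\dm(A)^s\leq K\eps$. Since $\eps>0$ is arbitrary, $\mathcal{H}_{n,5}^s(\alpha^{-1}(F))=0$ for every $n\in\N$, and letting $n\to\infty$ yields $\mathcal{H}_5^s(\alpha^{-1}(F))=0$. The main obstacle is the multiplicity-and-diameter estimate described in the previous paragraph: the $\kappa$-condition alone only counts \emph{distinct} image elements on the $\gf$-side, and one must invoke finitely splitting both to transfer this count upstairs to $\Gamma_{l_i}$ and to compare the diameters of the neighbours of $B_i$ inside $\Delta_{l_i}$ with $\dm(B_i)$ itself; that pair of controls is precisely where the two structural assumptions on $\gf$ enter the proof.
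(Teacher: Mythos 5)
Your covering construction and the reduction to the per-piece estimate $\sum_{A\in \mathcal{C}_i}\dm(A)^s\leq K\,\dm(B_i)^{ds}$ are fine, but that estimate --- which you yourself flag as the crux --- does not follow from the stated hypotheses, and this is a genuine gap. The finitely splitting property is a purely combinatorial bounded-branching condition, and the $\kappa-$condition only bounds the \emph{number} of elements of $\Delta_{l_i}$ meeting $B_i$; neither gives any control on the \emph{diameters} of those neighbours. Nothing in Main hypotheses~\ref{hypo:main} forces the elements of a single level to have comparable size, so two adjacent elements of $\Delta_{l_i}$ may have wildly different diameters, the ratio $\dm(C)/\dm(B_i)$ for $C\in\Delta_{l_i}$ with $C\cap B_i\neq\emptyset$ is not bounded by any function of $\kappa'$, and $\sum_{A\in\mathcal{C}_i}\dm(A)^s$ can be arbitrarily large compared with $\dm(B_i)^{ds}$. (A secondary unproved claim: finitely splitting of $\gf$ says nothing about the fiber multiplicity $\cd(\{A\in\Gamma_{l_i}:\alpha(A)=C\})$ either; the paper sidesteps this by treating $\Delta_n=\alpha(\Gamma_n)$ as an indexed family, so that elements of $\Delta_n$ correspond bijectively to elements of $\Gamma_n$.)

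The paper avoids the problem by not covering $B_i$ with its same-level neighbours. For each $B_i$ it chooses the level $n_i$ adapted to the diameter of $B_i$, namely $\dm(\Delta_{n_i})\leq \dm(B_i)<\dm(\Delta_{n_i-1})$, and covers $B_i$ by $\mathcal{A}_i=\{C\in\Delta_{n_i}: C\cap B_i\neq\emptyset\}$. Then every covering element automatically satisfies $\dm(C)\leq\dm(\Delta_{n_i})\leq\dm(B_i)$, which is precisely the diameter control you are missing; the $\kappa-$condition applies at level $n_i-1$ (because $\dm(B_i)<\dm(\Delta_{n_i-1})$) to give at most $\kappa$ elements of $\Delta_{n_i-1}$ meeting $B_i$, and finitely splitting descends this to at most $\kappa\cdot\kappa'$ elements of $\Delta_{n_i}$. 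Pulling these back to $\Gamma_{n_i}$ and using Eq.~(\ref{hypo:diam}) then yields exactly your intended bound with $K=\kappa\cdot\kappa'\cdot c^{-s}$. If you replace your level-$l_i$ cover $\mathcal{C}_i$ by this level-$n_i$ cover, the rest of your argument goes through.
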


\begin{proof}
Let $s\geq 0$ be such that $\mathcal{H}_5^{ds}(F)=0$. By Main hypothesis~\ref{hypo:main}, there exists $c\neq 0$ and $d\in \R$ such that $\dm(\alpha(A))^d=c\cdot \dm(A)$ for all $A\in \Gamma_n$ and all $n\in \N$. 
Moreover, let $\eps>0$ and $\eps'=c^s\cdot \eps$, as well. First, since $\mathcal{H}_5^{ds}(F)=\lim_{n\to \infty}\mathcal{H}_{n,5}^{ds}(F)=0$, then there exists $n_0\in \N$ such that $\mathcal{H}_{n,5}^{ds}(F)<\gamma$ for all $n\geq n_0$, where $\gamma=\frac{\eps'}{\kappa\cdot \kappa'}$ with $\kappa$ and $\kappa'$ being the constants provided by both the $\kappa-$condition and the finitely splitting property that stand for $\gf$. Let $n\geq n_0$. Since 
\begin{align*}
\mathcal{H}_{n,5}^{ds}(F)&=\inf\left\{\sum_{i\in I}\dm(A_i)^{ds}:\{A_i\}_{i\in I}\in \mathcal{A}_{\Delta_n}^5(F)\right\},\text{ where }\\
\mathcal{A}_{\Delta_n}^5(F)&=\{\{A_i\}_{i\in I}:\text{ for all } i\in I, \text{ there exists } m\geq n:A_i\in \Delta_m,\\
&F\subseteq \cup_{i\in I}A_i\},
\end{align*}
then there exists $\{A_i\}_{i\in I}$ satisfying the three following:
\begin{enumerate}[(i)]
\item $F\subseteq \cup_{i\in I}A_i$.\label{it:1}
\item For all $i\in I$, there exists $m\geq n$ such that $A_i\in \Delta_m$ with $\dm(A_i)\leq \dm(\Delta_m)\leq\dm(\Delta_{n_0})$, and \label{it:21}
\item $\sum_{i\in I}\dm(A_i)^{ds}<\gamma$.\label{it:3}
\end{enumerate} 
In addition, for all $i\in I$, let $n_i\in \N$ be such that 
\begin{align}\label{eq:diam-ni}
\dm(\Delta_{n_i})\leq \dm(A_i)<\dm(\Delta_{n_i-1}).
\end{align}
By both (\ref{it:21}) and Eq.~(\ref{eq:diam-ni}), it holds that $\dm(\Delta_{n_i})\leq\dm(A_i)\leq\dm(\Delta_{n_0})$. 
Thus, $n_i\geq n_0$ for all $i\in I$. 
Next, we shall define an appropriate covering for the elements in $\{A_i\}_{i\in I}$. Let
\begin{align*}
\mathcal{A}&=\cup_{i\in I}\mathcal{A}_i, \text{ where }\\
\mathcal{A}_i&=\{C\in \Delta_{n_i}:C\cap A_i\neq \emptyset\}\text{ for all } i\in I.
\end{align*}
It is worth pointing out that $\St(A_i,\Delta_{n_i})=\cup\{C\in \Delta_{n_i}:C\cap A_i\neq \emptyset\}=\cup\{C:C\in \mathcal{A}_i\}$.
The four following hold:
\begin{enumerate}[(1)]
\item $\mathcal{A}$ is a covering of $F$. In fact, 
$F\subseteq \cup_{i\in I} A_i\subseteq \cup_{i\in I}\cup_{C\in \mathcal{A}_i}C=\cup_{C\in \mathcal{A}}C$, where the first inclusion is due to (\ref{it:1}) and the second one stands since $A_i\subseteq \cup\{C:C\in \mathcal{A}_i\}$ for each $A_i\in \{A_i\}_{i\in I}$. 
\item $\sum_{A\in \mathcal{A}}\dm(A)^{ds}<\eps'$. Indeed, observe that \label{it:2} 
\begin{align*}
\sum_{A\in \mathcal{A}}\dm(A)^{ds}&=\sum_{i\in I}\sum_{A\in \mathcal{A}_i}\dm(A)^{ds}\leq \sum_{i\in I}\sum_{A\in \mathcal{A}_i}\dm(\Delta_{n_i})^{ds}\\
&\leq \sum_{i\in I} \kappa\cdot \kappa'\cdot \dm(\Delta_{n_i})^{ds}\leq \kappa\cdot \kappa' \sum_{i\in I}\dm(A_i)^{ds}\\
&<\kappa\cdot \kappa'\cdot \gamma=\eps',
\end{align*}
where the first inequality holds since $\dm(A)\leq \dm(\Delta_{n_i})$ for all $A\in \mathcal{A}_i$.
It is worth mentioning that the second inequality stands by applying both the $\kappa-$condition and the finitely splitting property. In fact, for all $i\in I$, it holds that $\dm(A_i)<\dm(\Delta_{n_i-1})$ (c.f.~Eq.~(\ref{eq:diam-ni})), so $A_i$ intersects to $\leq \kappa$ elements in $\Delta_{n_i-1}$ by the $\kappa-$condition. Hence, $A_i$ intersects to $\leq \kappa\cdot \kappa'$ elements in $\Delta_{n_i}$ since $\gf$ is finitely splitting. Thus, 
$\cd(\mathcal{A}_i)\leq \kappa\cdot \kappa'$. 
Eq.~(\ref{eq:diam-ni}) also yields the third inequality. Notice also that (\ref{it:3}) has been applied to deal with the last one.  
\item For all $C\in \mathcal{A}$, there exists $n_i\geq n_0$ such that $C\in \Delta_{n_i}$. Thus, we can write $C=\alpha(C')$ for some $C'\in \Gamma_{n_i}$. By Main hypotheses \ref{hypo:main}, there exist $c\neq 0$ and $d\in \R$ such that $\dm(C)^d=c\cdot \dm(C')$ for all $C\in \mathcal{A}$. Thus, we have
\[
\sum_{C'\in \mathcal{A}'}\dm(C')^s=\frac{1}{c^s}\,\sum_{C\in \mathcal{A}}\dm(C)^{ds}<\frac{\eps'}{c^s}=\eps,
\] 
where $\mathcal{A}'=\cup_{i\in I}\mathcal{A}'_i$ and $\mathcal{A}'_i=\{C'\in \Gamma_{n_i}:\alpha(C')\in \mathcal{A}_i\}$ for all $i\in I$. It is worth noting that (\ref{it:2}) has been applied in the inequality above.

\item $\alpha^{-1}(F)\subseteq \cup_{C'\in \mathcal{A}'}C'$. Let $x\in \alpha^{-1}(F)$. We shall prove that there exists $C'\in \mathcal{A}'$ such that $x\in C'$. First, we have $\alpha(x)\in F$. Since $F\subseteq \cup_{i\in I}A_i$ by (\ref{it:1}), then $\alpha(x)\in A_i$ for some $i\in I$.
On the other hand, let $C'\in \Gamma_{n_i}$ be such that $x\in C'$. Then $C'\in \mathcal{A}'_i$, if and only if, $\alpha(C')\in \mathcal{A}_i$. In this way, observe that $\alpha(C')\in \Delta_{n_i}$ with $n_i\geq n_0$ since $C'\in \Gamma_{n_i}$. Next, we verify that $\alpha(C')\cap A_i\neq \emptyset$. Indeed, $\alpha(x)\in \alpha(C')$ since $x\in C'$. Thus, $\alpha(x)\in \alpha(C')\cap A_i$, so $\alpha(C')\cap A_i\neq \emptyset$. Therefore, $\alpha(C')\in \mathcal{A}_i$ and hence, $C'\in \mathcal{A}'_i$. Accordingly, $x\in C'\in \mathcal{A}'_i\subseteq \mathcal{A}'$. 
\end{enumerate}
The previous calculations allow justifying that for all $\eps>0$, there exists $n_0\in \N$ such that $\mathcal{H}_{n,5}^s(\alpha^{-1}(F))<\eps$ for all $n\geq n_0$. Equivalently, $\mathcal{H}_{5}^s(\alpha^{-1}(F))=0$. 
\end{proof}

\begin{theorem}\label{cor:>}
Let $F\subseteq Y$ and $s\geq 0$. Assume that $\gf$ is finitely splitting and satisfies the $\kappa-$condition. Under Main hypotheses~\ref{hypo:main}, it holds that
\[
d\cdot \dim_{\ef}^5(\alpha^{-1}(F))\leq \dim_{\gf}^5(F).
\]
\end{theorem}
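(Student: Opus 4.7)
The plan is to derive this inequality as an essentially immediate consequence of the previous Proposition~\ref{prop:awk}, in complete parallel with how Theorem~\ref{cor:<} is obtained from Proposition~\ref{prop:ds<s}. The strategy is the contrapositive approach on the critical values defining fractal dimension V.

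First I would pick an arbitrary $s \geq 0$ satisfying $ds > \dim_{\gf}^5(F)$. By the characterization of fractal dimension V as the infimum of the values where $\mathcal{H}_5^{\,\cdot}(F)$ vanishes, this choice forces $\mathcal{H}_5^{ds}(F) = 0$. Since both hypotheses needed for Proposition~\ref{prop:awk} are in force (namely, that $\gf$ is finitely splitting, satisfies the $\kappa$-condition, and that Main hypotheses~\ref{hypo:main} hold), I can invoke it to conclude $\mathcal{H}_5^s(\alpha^{-1}(F)) = 0$. By the definition of $\dim_{\ef}^5$, this yields $s \geq \dim_{\ef}^5(\alpha^{-1}(F))$.

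Thus every $s$ with $s > \tfrac{1}{d}\,\dim_{\gf}^5(F)$ satisfies $s \geq \dim_{\ef}^5(\alpha^{-1}(F))$, and taking the infimum over such $s$ gives $\dim_{\ef}^5(\alpha^{-1}(F)) \leq \tfrac{1}{d}\,\dim_{\gf}^5(F)$, which rearranges into the desired inequality $d\cdot \dim_{\ef}^5(\alpha^{-1}(F)) \leq \dim_{\gf}^5(F)$.

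There is no real obstacle at this stage: all the heavy lifting has already been carried out in Proposition~\ref{prop:awk}, where the delicate construction of the covering $\mathcal{A}$ refining $\{A_i\}_{i\in I}$ via the $\kappa$-condition and the finitely splitting property was performed to control the $s$-weighted sums after pulling back through $\alpha$. The only thing to be careful about is the strict versus non-strict inequality when passing to the infimum, which is why one uses $ds > \dim_{\gf}^5(F)$ rather than $\geq$; this is legitimate since the vanishing of $\mathcal{H}_5^{ds}(F)$ is guaranteed strictly above the critical value. Together with Theorem~\ref{cor:<}, this yields the equality $\dim_{\gf}^5(F) = d\cdot \dim_{\ef}^5(\alpha^{-1}(F))$ under the additional structural assumptions on $\gf$.
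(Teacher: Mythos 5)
Your proposal is correct and follows exactly the paper's own argument: both apply Proposition~\ref{prop:awk} to deduce that $\mathcal{H}_5^{ds}(F)=0$ forces $\mathcal{H}_5^s(\alpha^{-1}(F))=0$, hence every $s>\tfrac 1d\cdot\dim_{\gf}^5(F)$ dominates $\dim_{\ef}^5(\alpha^{-1}(F))$, and the inequality follows by taking the infimum. Your explicit care with the strict inequality above the critical value is a sound (and slightly more detailed) rendering of the same step the paper performs.
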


\begin{proof}
In fact, by Proposition \ref{prop:awk}, it holds that $\mathcal{H}_5^{ds}(F)=0$ implies $\mathcal{H}_5^s(\alpha^{-1}(F))=0$. Thus, for all $s> \frac 1d\cdot \dim_{\gf}^5(F)$, we have $s\geq \dim_{\ef}^5(\alpha^{-1}(F))$, and hence the desired equality stands.
\end{proof}
The next key result holds as a consequence of previous Theorems \ref{cor:<} and \ref{cor:>}.
\begin{theorem}\label{teo:dim5-to-R}
Let $F\subseteq Y$. Assume that $\gf$ is finitely splitting and satisfies the $\kappa-$condition. Under Main hypotheses~\ref{hypo:main}, we have
\[
\dim_{\gf}^5(F)=d\cdot \dim_{\ef}^5(\alpha^{-1}(F)).
\]
\end{theorem}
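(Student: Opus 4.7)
The plan is to obtain the claimed identity as a direct two-sided squeeze, combining the two opposite inequalities that have already been established in Theorems~\ref{cor:<} and \ref{cor:>}. Concretely, I would first invoke Theorem~\ref{cor:<}, which under Main hypotheses~\ref{hypo:main} alone (without requiring the finitely splitting property or the $\kappa$-condition) yields
\[
\dim_{\gf}^5(F)\leq d\cdot \dim_{\ef}^5(\alpha^{-1}(F)).
\]
This inequality ultimately rests on Proposition~\ref{prop:ds<s}, whose essential content is that a cover of $\alpha^{-1}(F)$ by elements of $\ef$ pushes forward via $\alpha$ to a cover of $F$ by elements of $\gf$ (Lemma~\ref{lema:2}), together with the diameter relation $\dm(\alpha(A))^{d}=c\cdot \dm(A)$ coming from Main hypotheses~\ref{hypo:main}.

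Next I would invoke Theorem~\ref{cor:>}, which supplies the reverse inequality
\[
d\cdot \dim_{\ef}^5(\alpha^{-1}(F))\leq \dim_{\gf}^5(F),
\]
and which is precisely the place where the two extra assumptions on $\gf$ enter: the $\kappa$-condition and the finitely splitting property are used in Proposition~\ref{prop:awk} to bound, up to the constant $\kappa\cdot \kappa'$, the $\sum\dm(\cdot)^{ds}$-mass of a $\gf$-cover of $F$ by the $\sum\dm(\cdot)^{s}$-mass of the naturally associated $\ef$-cover of $\alpha^{-1}(F)$. Combining the two displayed inequalities yields exactly $\dim_{\gf}^5(F)=d\cdot \dim_{\ef}^5(\alpha^{-1}(F))$, as required.

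Since both halves are already in hand, there is essentially no remaining obstacle: the proof is a one-line citation of Theorems~\ref{cor:<} and \ref{cor:>}. The only care needed is to confirm that the hypotheses are correctly aligned. Theorem~\ref{cor:<} needs only Main hypotheses~\ref{hypo:main}; Theorem~\ref{cor:>} additionally needs the $\kappa$-condition and the finitely splitting property on $\gf$. Both are part of the statement of Theorem~\ref{teo:dim5-to-R}, so each cited result applies verbatim and the squeeze closes, finishing the proof.
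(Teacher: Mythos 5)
Your proposal is correct and is exactly the paper's argument: the paper states this theorem as an immediate consequence of Theorems~\ref{cor:<} and \ref{cor:>}, combining the two opposite inequalities under the stated hypotheses. Your accounting of which hypotheses each half requires also matches the paper.
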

Without too much effort, both Propositions \ref{prop:ds<s} and \ref{prop:awk} as well as Theorems \ref{cor:<}, \ref{cor:>}, and \ref{teo:dim5-to-R} can be proved to stand for fractal dimension IV under the same hypotheses. Thus, we also have the next result for that fractal dimension, which only involves finite coverings and becomes especially appropriate for empirical applications.
\begin{theorem}\label{teo:dim4-to-R}
Let $F\subseteq Y$. Assume that $\gf$ is finitely splitting and satisfies the $\kappa-$condition. Under Main hypotheses~\ref{hypo:main}, it holds that
\[
\dim_{\gf}^4(F)=d\cdot \dim_{\ef}^4(\alpha^{-1}(F)).
\]
\end{theorem}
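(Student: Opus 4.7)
The plan is to follow the chain Proposition~\ref{prop:ds<s} $\to$ Proposition~\ref{prop:awk} $\to$ Theorem~\ref{cor:<} $\to$ Theorem~\ref{cor:>} $\to$ Theorem~\ref{teo:dim5-to-R} verbatim, after replacing every occurrence of $\mathcal{H}_{n,5}^s$ and $\mathcal{A}_{n,5}$ by their counterparts $\mathcal{H}_{n,4}^s$ and $\mathcal{A}_{n,4}$. The only structural difference between dimensions~IV and~V is the finiteness constraint $\cd(I)<\infty$ built into $\mathcal{A}_{n,4}$, so the whole argument reduces to checking that finiteness is preserved by every construction employed for dimension~V.

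For the easy direction, $\dim_{\gf}^4(F)\leq d\cdot \dim_{\ef}^4(\alpha^{-1}(F))$, the analogue of Proposition~\ref{prop:ds<s} is immediate. Given a \emph{finite} family $\{A_i\}_{i\in I}$ with $A_i\in \cup_{m\geq n}\Gamma_m$ covering $\alpha^{-1}(F)$, Lemma~\ref{lema:2} produces the cover $\{\alpha(A_i)\}_{i\in I}$ of $F$ whose index set is exactly the same finite $I$ and whose members live in $\cup_{m\geq n}\Delta_m$. The identity $\dm(A_i)^s=c^{-s}\,\dm(\alpha(A_i))^{ds}$ from Eq.~(\ref{hypo:diam}) then gives $\mathcal{H}_{n,4}^s(\alpha^{-1}(F))\geq c^{-s}\,\mathcal{H}_{n,4}^{ds}(F)$, and the threshold argument of Theorem~\ref{cor:<} yields the claimed inequality upon letting $n\to\infty$.

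The reverse inequality requires the analogue of Proposition~\ref{prop:awk}, and this is the step where finiteness must be tracked carefully; I expect it to be the main (minor) obstacle. Starting from a finite family $\{A_i\}_{i\in I}$ witnessing $\mathcal{H}_{n,4}^{ds}(F)<\gamma$, one forms, exactly as in Proposition~\ref{prop:awk}, the enlarged cover $\mathcal{A}=\cup_{i\in I}\mathcal{A}_i$ with $\mathcal{A}_i=\{C\in \Delta_{n_i}:C\cap A_i\neq \emptyset\}$. The $\kappa$-condition combined with the finitely splitting property forces $\cd(\mathcal{A}_i)\leq \kappa\cdot\kappa'$, whence $\cd(\mathcal{A})\leq \kappa\cdot \kappa'\cdot \cd(I)<\infty$, so finiteness is preserved. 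The pull-back family $\mathcal{A}'=\cup_{i\in I}\{C'\in \Gamma_{n_i}:\alpha(C')\in \mathcal{A}_i\}$ is therefore also finite, covers $\alpha^{-1}(F)$, and satisfies $\sum_{C'\in \mathcal{A}'}\dm(C')^s<\varepsilon$, so it is a legitimate competitor for $\mathcal{H}_{n,4}^s(\alpha^{-1}(F))$. This yields $\mathcal{H}_4^s(\alpha^{-1}(F))=0$ whenever $\mathcal{H}_4^{ds}(F)=0$, and the argument of Theorem~\ref{cor:>} concludes with $d\cdot \dim_{\ef}^4(\alpha^{-1}(F))\leq \dim_{\gf}^4(F)$, closing the proof.
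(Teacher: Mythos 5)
Your proposal is correct and follows exactly the route the paper takes: the paper offers no separate proof of Theorem~\ref{teo:dim4-to-R}, stating only that Propositions~\ref{prop:ds<s} and \ref{prop:awk} together with Theorems~\ref{cor:<}, \ref{cor:>}, and \ref{teo:dim5-to-R} ``can be proved to stand for fractal dimension IV under the same hypotheses,'' which is precisely the transfer you carry out. Your explicit check that finiteness of the index set survives each construction (via $\cd(\mathcal{A}_i)\leq \kappa\cdot \kappa'$, hence $\cd(\mathcal{A})\leq \kappa\cdot\kappa'\cdot\cd(I)$) is the one point the paper leaves implicit; note only that finiteness of the pull-back $\mathcal{A}'$ additionally relies on identifying each $C\in\Delta_{n_i}$ with a single $C'\in\Gamma_{n_i}$ satisfying $\alpha(C')=C$, the same tacit identification already used in the re-indexed sum in the paper's own proof of Proposition~\ref{prop:awk}, so you introduce no new gap.
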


The following result regarding fractal dimension IV stands similarly to Theorem~\ref{teo:bc-to-R-via-dim3}.
\begin{theorem}\label{teo:H-to-R-via-dim4}
Let $F$ be a compact subset of $[0,1]^d$, $\gf$ be the natural fractal structure on $[0,1]^d$, and $\alpha:X\to [0,1]^d$ a function between the GF-spaces $(X,\ef)$ and $([0,1]^d,\gf)$ with $\gf=\alpha(\ef)$. 
Under Main hypotheses \ref{hypo:main}, Hausdorff dimension of $F$ equals the fractal dimension IV of $\alpha^{-1}(F)$ multiplied by the embedding dimension, $d$, i.e.,
\[
\h(F)=d\cdot \cuatro(\alpha^{-1}(F)).
\] 
\end{theorem}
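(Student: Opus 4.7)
The proof is essentially a short composition of two results already established in the excerpt, so the plan is to verify their hypotheses and chain them together. First, since $F$ is compact and $\gf$ is the natural fractal structure on $[0,1]^d\subseteq \R^d$, Theorem~\ref{teo:previo}(\ref{teo:4=H}) gives
\[
\h(F)=\cuatro(F)\quad\text{(computed with respect to }\gf\text{).}
\]
Second, Theorem~\ref{teo:dim4-to-R} yields
\[
\dim_{\gf}^4(F)=d\cdot \cuatro(\alpha^{-1}(F)),
\]
provided Main hypotheses~\ref{hypo:main} hold (which is assumed) and $\gf$ is both finitely splitting and satisfies the $\kappa$-condition. Composing the two identities closes the proof.

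The only thing to verify, therefore, is that the natural fractal structure $\gf$ on $[0,1]^d$ satisfies the two combinatorial properties required by Theorem~\ref{teo:dim4-to-R}. For the finitely splitting property (Definition~\ref{def:conds}), each dyadic cube $\prod_{i=1}^d [k_i/2^n,(1+k_i)/2^n]$ in $\Delta_n$ is partitioned by exactly $2^d$ dyadic cubes of the subsequent level $\Delta_{n+1}$, so one may take $\kappa'=2^d$. For the $\kappa$-condition (Definition~\ref{def:k-cond}), note that $\dm(\Delta_n)=\sqrt{d}/2^n$, and any subset of $\R^d$ of diameter $\leq \sqrt{d}/2^n$ is contained in a ball of that diameter, which can be covered by (and hence intersect) at most $3^d$ closed dyadic cubes at level $n$; thus $\kappa=3^d$ works. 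Both constants depend only on $d$, which is exactly what Definitions~\ref{def:k-cond} and \ref{def:conds} require.

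With these two properties checked, the argument proceeds without further complication: compactness of $F$ triggers Theorem~\ref{teo:previo}(\ref{teo:4=H}) to identify $\h(F)$ with $\dim_{\gf}^4(F)$, and the combinatorial properties of $\gf$ together with Main hypotheses~\ref{hypo:main} trigger Theorem~\ref{teo:dim4-to-R} to transfer $\dim_{\gf}^4(F)$ to $d\cdot \cuatro(\alpha^{-1}(F))$. There is no genuine obstacle; the work lies entirely in the verification of the two elementary combinatorial properties of dyadic cubes, both of which are standard. I expect the statement of the theorem and its placement to mirror exactly that of Theorem~\ref{teo:bc-to-R-via-dim3}, which is the analogous result for the box dimension derived by the same two-step reduction (there via Theorem~\ref{teo:previo}(\ref{teo:bc=tres}) and Theorem~\ref{teo:dim3-to-R}). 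Note finally that the compactness assumption on $F$ enters only to activate Theorem~\ref{teo:previo}(\ref{teo:4=H}); if one were instead to use fractal dimension V (Theorem~\ref{teo:previo}(\ref{teo:5=H}) together with Theorem~\ref{teo:dim5-to-R}), one could drop the compactness assumption at the cost of working with infinite coverings.
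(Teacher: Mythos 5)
Your proof is correct and follows exactly the paper's own argument: chain Theorem~\ref{teo:previo}(\ref{teo:4=H}) (using compactness of $F$) with Theorem~\ref{teo:dim4-to-R}. Your explicit verification that the natural fractal structure $\gf$ on $[0,1]^d$ is finitely splitting (with $\kappa'=2^d$) and satisfies the $\kappa$-condition (with $\kappa=3^d$) is a detail the paper leaves implicit here (it appears later in Remark~\ref{obs:eu-fdims}), and your constants agree with the paper's.
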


\begin{proof}
In fact, it is worth noting that
\[
\h(F)=\dim_{\gf}^4(F)=d\cdot\cuatro(\alpha^{-1}(F)),
\]
where the first equality stands by Theorem \ref{teo:previo} (\ref{teo:4=H}) since $\gf$ is the natural fractal structure on $F$ and the last identity is due to Theorem \ref{teo:dim4-to-R}.
\end{proof}
Accordingly, the previous result guarantees that Hausdorff dimension of each compact subset $F$ of $[0,1]^d$ can be calculated in terms of the fractal dimension IV of $\alpha^{-1}(F)\subseteq [0,1]$. Thus, the Algorithm provided in \cite{MFM15} may be applied with this aim.


\section{Calculating both the box and Hausdorff dimensions in higher dimensional spaces}\label{sec:4}

The next remark becomes useful for upcoming purposes.
\begin{remark}\label{obs:dms}
Let $F\subseteq Y$ and $n\in \N$. Under Main hypotheses \ref{hypo:main}, it holds that 
\[
\dm(\alpha^{-1}(F),\Gamma_n)\to 0\Leftrightarrow \dm(F,\Delta_n)\to 0.
\]  
\end{remark}

\begin{proof}~

\begin{enumerate}
\item [($\Rightarrow$)] Let $\eps>0$. Then there exists $n_0\in \N$ such that $\dm(\alpha^{-1}(F),\Gamma_n)<\gamma$ for all $n\geq n_0$ with $\gamma=\frac{\eps^d}{c}$. Thus, $\dm(A)<\gamma$ for all $A\in \Gamma_n$ with $A\cap \alpha^{-1}(F)\neq \emptyset$ and $n\geq n_0$. Hence, Main hypotheses \ref{hypo:main} imply that $\dm(\alpha(A))<(c\cdot \gamma)^{\frac 1d}=\eps$ for all $A\in \Gamma_n$ with $A\cap \alpha^{-1}(F)\neq \emptyset$ and $n\geq n_0$. Accordingly, Lemma \ref{lema:1} leads to $\dm(\alpha(A))<\eps$ for all $A\in \Gamma_n$ with $\alpha(A)\cap F\neq \emptyset$ and $n\geq n_0$, so $\dm(F,\Delta_n)\to 0$.
\item [($\Leftarrow$)] Let $\eps>0$. Then there exists $n_0\in \N$ such that $\dm(\alpha(A))<\gamma$ for all $A\in \Gamma_n$ with $\alpha(A)\cap F\neq \emptyset$ and $n\geq n_0$, where $\gamma=(c\cdot \eps)^{\frac 1d}$. Since $\dm(\alpha(A))=(c\cdot \dm(A))^{\frac 1d}<\gamma$ for all $A\in \Gamma_n$ and $n\in \N$ by Main hypotheses \ref{hypo:main}, then we can affirm that $\dm(A)<\eps$ for all $A\in \Gamma_n$ with $\alpha(A)\cap F\neq \emptyset$. By Lemma \ref{lema:1} we have that $\dm(A)<\eps$ for all $A\in \Gamma_n$ with $A\cap \alpha^{-1}(F)\neq \emptyset$ and $n\geq n_0$, so $\dm(\alpha^{-1}(F),\Gamma_n)\to 0$.
\end{enumerate}
\end{proof}

It is worth pointing out that both results \cite[Theorem 4.1]{GARCIA2017} and \cite[Theorem 1]{SkubalskaRafajowicz2005} allow the calculation of the box dimension of a given subset $F$ in terms of the box dimension of a lower dimensional set connected with $F$ via either a $\delta-$uniform curve or a quasi-inverse function, respectively. However, both of them stand for Euclidean subsets. Next, we provide a similar result in a more general setting.
 
\begin{theorem}\label{teo:bc-to-R}
Assume that Main hypotheses \ref{hypo:main} are satisfied, let $F\subseteq Y$, and assume that $\dm(F,\Delta_n)\to 0$. If both fractal structures $\ef$ and $\gf$ lie under the $\kappa-$condition, then the (lower/upper) box dimension of $F$ equals the (lower/upper) box dimension of $\alpha^{-1}(F)$ multiplied by $d$. In particular, if $\bc(F)$ exists, then $\bc(\alpha^{-1}(F))$ also exists (and reciprocally), and it holds that 
\[
\bc(F)=d\cdot \bc(\alpha^{-1}(F)).
\]
\end{theorem}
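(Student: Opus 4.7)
The plan is to combine Theorem~\ref{teo:dim2-to-R}, which already delivers the desired identity at the level of fractal dimension II, with Theorem~\ref{teo:k-cond->bc=dos}, which identifies fractal dimension II with box dimension under the $\kappa$-condition plus a vanishing-diameter hypothesis. Since the Main hypotheses are assumed, Theorem~\ref{teo:dim2-to-R} immediately gives, in both the lower and upper sense,
\[
\dim_{\gf}^{2}(F)=d\cdot \dim_{\ef}^{2}(\alpha^{-1}(F)).
\]
Thus the task reduces to replacing $\dim^{2}$ by $\bc$ on each side.

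On the $Y$-side, both ingredients required by Theorem~\ref{teo:k-cond->bc=dos} are available by hypothesis: $\gf$ satisfies the $\kappa$-condition and $\dm(F,\Delta_n)\to 0$. So that theorem yields $\bc(F)=\dim_{\gf}^{2}(F)$ in the (lower/upper) sense, and combining with the display above gives
\[
\bc(F)=d\cdot \dim_{\ef}^{2}(\alpha^{-1}(F)).
\]
On the $X$-side, I would first invoke Remark~\ref{obs:dms} to translate the assumption $\dm(F,\Delta_n)\to 0$ into $\dm(\alpha^{-1}(F),\Gamma_n)\to 0$; together with the hypothesis that $\ef$ satisfies the $\kappa$-condition, this puts us back in the situation of Theorem~\ref{teo:k-cond->bc=dos}, now applied over $X$, to conclude $\bc(\alpha^{-1}(F))=\dim_{\ef}^{2}(\alpha^{-1}(F))$. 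Chaining the two identities finishes the identity.

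For the existence statement, I would argue via the lower/upper version. Because the identity $\dim_{\gf}^{2}(F)=d\cdot \dim_{\ef}^{2}(\alpha^{-1}(F))$ holds separately for the lower and the upper fractal dimension II, once both coincide on the $Y$-side (which is the content of ``$\bc(F)$ exists'' combined with Theorem~\ref{teo:k-cond->bc=dos}), the coincidence transfers to the $X$-side, ensuring that $\dim_{\ef}^{2}(\alpha^{-1}(F))$ exists as a limit; then a second application of Theorem~\ref{teo:k-cond->bc=dos} upgrades this to the existence of $\bc(\alpha^{-1}(F))$ with the same value. The reverse direction (existence of $\bc(\alpha^{-1}(F))$ forcing existence of $\bc(F)$) is symmetric.

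The main subtlety I anticipate is precisely this existence bookkeeping: Theorem~\ref{teo:k-cond->bc=dos} is stated assuming that $\bc$ already exists, so to deduce existence on the $X$-side from existence on the $Y$-side one must be careful to apply (or re-derive) the lower/upper form of that theorem and to verify that equality of lower and upper fractal dimension II really propagates back to equality of lower and upper box dimension. Aside from this, every step is a direct citation of results already proved above, with Remark~\ref{obs:dms} providing the only small bridge.
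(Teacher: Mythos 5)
Your proposal is correct and follows essentially the same route as the paper: the paper's proof is exactly the chain $\bc(F)=\dim_{\gf}^2(F)=d\cdot\dim_{\ef}^2(\alpha^{-1}(F))=d\cdot\bc(\alpha^{-1}(F))$, with the outer equalities justified by Theorem~\ref{teo:k-cond->bc=dos} together with Remark~\ref{obs:dms} and the middle one by Theorem~\ref{teo:dim2-to-R}. The existence subtlety you flag (that Theorem~\ref{teo:k-cond->bc=dos} as stated presupposes existence of $\bc$, so one must work with the lower/upper form to transfer existence between the two sides) is real but glossed over in the paper's own one-line proof, so your more careful bookkeeping is, if anything, an improvement rather than a divergence.
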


\begin{proof}
In fact, the following chain of identities holds for lower/upper dimensions:
\[
\bc(F)=\dim_{\gf}^2(F)=d\cdot \dim_{\ef}^2(\alpha^{-1}(F))=d\cdot \bc(\alpha^{-1}(F)),
\]
where the first and the last equalities hold by both Theorem \ref{teo:k-cond->bc=dos} and Remark \ref{obs:dms}, and the second one is due to Theorem~\ref{teo:dim2-to-R}.
\end{proof}



The next remark regarding the existence of the box dimension of $\alpha^{-1}(F)$ (resp., of $F$) should be highlighted.
\begin{remark}
It is worth pointing out that, under the hypotheses of Theorem \ref{teo:bc-to-R}, 
$\bc(F)$ exists, if and only if, $\bc(\alpha^{-1}(F))$ exists. 
\end{remark}



Next step is to prove a similar result to Theorem \ref{teo:bc-to-R} for Hausdorff dimension. To deal with, first we provide the following

\begin{prop}\label{prop:k-cond->H=cinco}
Let $\ef$ be a finitely splitting fractal structure on $X$ satisfying the $\kappa-$condition with $\dm(\Gamma_n)\to 0$. Then $\mathcal{H}_{\rm H}^s(F)=0$ implies $\mathcal{H}_5^s(F)=0$ for all subset $F$ of $X$.
\end{prop}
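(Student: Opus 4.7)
The plan is to prove $\mathcal{H}_5^s(F)=0$ by showing that $\mathcal{H}_{n_0,5}^s(F)<\eps$ for every $\eps>0$ and every $n_0\in \N$. Given such $\eps$ and $n_0$, I would begin by using the hypothesis $\mathcal{H}_{\rm H}^s(F)=0$ to extract a countable $\delta$-cover $\{U_i\}_{i\in I}$ of $F$ with $\sum_{i\in I}\dm(U_i)^s<\gamma$, where $\gamma=\eps/(\kappa\kappa')$ and $\kappa,\kappa'$ are the constants supplied by the $\kappa$-condition and the finitely splitting property respectively. Since $\mathcal{H}_{\rm H}^s=\lim_{\delta\to 0}\mathcal{H}_{\delta}^s$ is a nondecreasing limit, $\mathcal{H}_{\delta}^s(F)=0$ for every $\delta>0$, so $\delta$ can additionally be shrunk below $\dm(\Gamma_{n_0})$; here I use that $\dm(\Gamma_n)$ is non-increasing in $n$ (a direct consequence of $\Gamma_{n+1}\prec \Gamma_n$) and tends to zero.

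For each $U_i$ with $\dm(U_i)>0$ I would define $n_i$ as the smallest integer with $\dm(\Gamma_{n_i})\leq \dm(U_i)$, so that $\dm(\Gamma_{n_i})\leq \dm(U_i)<\dm(\Gamma_{n_i-1})$. Monotonicity of $\dm(\Gamma_n)$ together with $\delta<\dm(\Gamma_{n_0})$ forces $n_i>n_0$. Applying the $\kappa$-condition at level $n_i-1$ shows that $U_i$ meets at most $\kappa$ elements of $\Gamma_{n_i-1}$, while the finitely splitting hypothesis bounds by $\kappa'$ the number of $\Gamma_{n_i}$-elements contained in any given $\Gamma_{n_i-1}$-element. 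Since refinement forces every $A\in \Gamma_{n_i}$ that meets $U_i$ to lie inside some $\Gamma_{n_i-1}$-element meeting $U_i$, the family $\mathcal{A}_i=\{A\in \Gamma_{n_i}:A\cap U_i\neq \emptyset\}$ has at most $\kappa\kappa'$ members, each of diameter at most $\dm(\Gamma_{n_i})\leq \dm(U_i)$.

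Finally, I would verify that $\mathcal{A}=\bigcup_{i\in I}\mathcal{A}_i$ is a countable cover of $F$ by elements of $\bigcup_{l\geq n_0}\Gamma_l$: it lives in levels $\geq n_0$ because $n_i>n_0$, and it covers $F$ because any $x\in U_i$ belongs to some $A\in \Gamma_{n_i}$ (since $\Gamma_{n_i}$ is itself a covering), and such an $A$ automatically lies in $\mathcal{A}_i$. The estimate
\[
\sum_{A\in \mathcal{A}}\dm(A)^s \leq \sum_{i\in I}\kappa\kappa'\,\dm(U_i)^s<\kappa\kappa'\,\gamma=\eps
\]
then yields $\mathcal{H}_{n_0,5}^s(F)<\eps$, whence $\mathcal{H}_5^s(F)=0$. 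The whole strategy closely mirrors the proof of Proposition~\ref{prop:awk}, with the Hausdorff $\delta$-cover now playing the role that $\{A_i\}\subset \Delta_m$ plays there.

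The main obstacle is the degenerate case $\dm(U_i)=0$ (singleton $U_i$), where $n_i$ is undefined yet the point still needs to be covered by an element of level $\geq n_0$. Since such singletons form a countable subfamily and $\dm(\Gamma_n)\to 0$, I would enumerate them and select, for the $j$-th such singleton, a $\Gamma_{m_j}$-element with $m_j\geq n_0$ of diameter less than $(\eps\,2^{-j})^{1/s}$ (using $s>0$; the case $s=0$ reduces to $F=\emptyset$). Their total contribution to $\sum \dm(A)^s$ is then at most $\eps$, and can be absorbed by halving $\gamma$ at the outset. With this minor adjustment the three-paragraph argument above delivers $\mathcal{H}_5^s(F)=0$.
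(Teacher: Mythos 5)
Your proof is correct and takes essentially the same route as the paper's: replace each member $U_i$ of a Hausdorff cover of small $s$-weight by the at most $\kappa\cdot\kappa'$ elements of $\Gamma_{n_i}$ meeting it, with $n_i$ pinned down by $\dm(\Gamma_{n_i})\leq \dm(U_i)<\dm(\Gamma_{n_i-1})$, and sum the resulting diameters. Your explicit handling of the degenerate sets with $\dm(U_i)=0$ patches a case the paper's proof silently skips; otherwise the two arguments coincide.
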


\begin{proof}
Let $\eps>0$ and $s\geq 0$ be such that $\mathcal{H}^s_{\rm H}(F)=0$. Since $\mathcal{H}_{\rm H}^s(F)=\lim_{\delta\to 0}\mathcal{H}_{\delta}^s(F)=\sup_{\delta>0}\mathcal{H}_{\delta}^s(F)=0$, then there exists $\delta_0>0$ such that $\mathcal{H}_{\delta}^s(F)<\gamma$ for all $\delta<\delta_0$, where $\gamma=\frac{\eps}{\kappa\cdot \kappa'}$ with $\kappa$ and $\kappa'$ being the constants provided by both the $\kappa-$condition and the finitely splitting property, resp., that stand for $\ef$. In addition, let $n_0\in \N$ be such that $\dm(\Gamma_{n_0})<\delta_0$. Thus, $\mathcal{H}^s_{\dm(\Gamma_{n_0})}(F)<\gamma$. Hence, there exists a family of subsets $\{B_i\}_{i\in I}$ satisfying that
\begin{enumerate}[(i)]
\item $F\subseteq \cup_{i\in I}B_i$.
\item $\dm(B_i)\leq \dm(\Gamma_{n_0})$ for all $i\in I$.\label{it:ii}
\item $\sum_{i\in I}\dm(B_i)^s<\gamma$.\label{it:iii}
\end{enumerate}
For each $i\in I$, let $n_i\in \N$ be such that 
\begin{equation}\label{eq:diameters}
\dm(\Gamma_{n_i})\leq \dm(B_i)<\dm(\Gamma_{n_i-1}).
\end{equation}
Moreover, for each $B_i\in \{B_i\}_{i\in I}$, we shall define a covering by elements in level $n_i$ of $\ef$. In fact, let $\mathcal{A}_i=\{A\in \Gamma_{n_i}:A\cap B_i\neq \emptyset\}$ for all $i\in I$ and $\mathcal{A}=\cup_{i\in I}\mathcal{A}_i$, as well.
Accordingly, the five following hold:
\begin{enumerate}[(1)]
\item $\St(B_i,\Gamma_{n_i})=\cup\{A\in \Gamma_{n_i}:A\cap B_i\neq \emptyset\}=\cup\{A:A\in \mathcal{A}_i\}$ for all $i\in I$. 
\item $n_i\geq n_0$ for all $i\in I$. In fact, notice that $\dm(\Gamma_{n_i})\leq \dm(B_i)\leq \dm(\Gamma_{n_0})$ for all $i\in I$, where the first inequality stands by Eq.~(\ref{eq:diameters}) and the second one is due to (\ref{it:ii}).
\item $\mathcal{A}$ covers $F$. Indeed, $F\subseteq \cup_{i\in I}B_i\subseteq \cup_{i\in I}\cup_{A\in \mathcal{A}_i}A=\cup_{A\in \cup_{i\in I}\mathcal{A}_i}A=\cup_{A\in \mathcal{A}}A$.
\item For all $A\in \mathcal{A}$, there exists $i\in I$ such that $A\in \mathcal{A}_i$, namely, $A\in \Gamma_{n_i}$ with $n_i\geq n_0$ (and $A\cap B_i\neq \emptyset$).
\item $\sum_{A\in \mathcal{A}}\dm(A)^s<\eps$. In fact,
\begin{align*}
\sum_{A\in \mathcal{A}}\dm(A)^s&=\sum_{i\in I}\sum_{A\in \mathcal{A}_i}\dm(A)^s\leq \sum_{i\in I}\sum_{A\in \mathcal{A}_i}\dm(\Gamma_{n_i})^s\\
&\leq \sum_{i\in I}\kappa\cdot \kappa'\cdot \dm(\Gamma_{n_i})^s\leq \kappa\cdot \kappa'\, \sum_{i\in I}\dm(B_i)^s\\
&<\kappa\cdot \kappa'\cdot \gamma=\eps,
\end{align*}
where the first inequality stands since $\dm(A)\leq \dm(\Gamma_{n_i})$ for all $A\in \mathcal{A}_i$. Moreover, the second inequality above holds since $\cd(\mathcal{A}_i)\leq \kappa\cdot \kappa'$ for all $i\in I$. In fact, $\ef$ lies under the $\kappa-$condition, so the number of elements in $\Gamma_{n_i-1}$ that are intersected by each $B_i$ is $\leq \kappa$ with $\dm(B_i)<\dm(\Gamma_{n_i-1})$ (c.f.~Eq.~(\ref{eq:diameters})). Therefore, $B_i$ intersects to $\leq \kappa\cdot \kappa'$ elements in $\Gamma_{n_i}$ by additionally applying the finitely splitting property, also standing for $\ef$. The third one follows since $\dm(\Gamma_{n_i})\leq \dm(B_i)$ for all $i\in I$ (c.f. Eq.~(\ref{eq:diameters})). Finally, we have applied (\ref{it:iii}) to deal with the last inequality. 
\end{enumerate}
Accordingly, the calculations above allow justifying that for all $\eps>0$, there exists $n_0\in \N$ such that $\mathcal{H}_{n,5}^s(F)<\eps$ for all $n\geq n_0$, namely, $\mathcal{H}_5^s(F)=0$.
\end{proof}

\begin{theorem}\label{teo:k-cond->H=cinco}
Let $\ef$ be a finitely splitting fractal structure on $X$ satisfying the $\kappa-$condition with $\dm(\Gamma_n)\to 0$.
Then $\h(F)=\dim_{\ef}^5(F)$.
\end{theorem}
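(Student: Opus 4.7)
The plan is to establish the equality $\h(F) = \cinco(F)$ via two inequalities, both of which compare the size of the set of exponents for which the respective $s$-measures vanish. The bulk of the technical work has already been done in Proposition~\ref{prop:k-cond->H=cinco}, so this theorem should essentially follow as a consequence.

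First, I would prove the easier inequality $\cinco(F) \geq \h(F)$. The key observation is that the covers used to define $\mathcal{H}_{n,5}^s(F)$ (namely, families $\{A_i\}_{i \in I}$ with $A_i \in \bigcup_{l \geq n} \Gamma_l$) are automatically $\dm(\Gamma_n)$-covers. Indeed, the refinement property $\Gamma_{l+1} \prec \Gamma_l$ implies $\dm(\Gamma_{l+1}) \leq \dm(\Gamma_l)$, so any $A_i \in \Gamma_l$ with $l \geq n$ satisfies $\dm(A_i) \leq \dm(\Gamma_n)$. This yields $\mathcal{H}_{\dm(\Gamma_n)}^s(F) \leq \mathcal{H}_{n,5}^s(F)$, and letting $n \to \infty$ (using $\dm(\Gamma_n) \to 0$) gives $\mathcal{H}_H^s(F) \leq \mathcal{H}_5^s(F)$. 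Therefore $\mathcal{H}_5^s(F) = 0$ implies $\mathcal{H}_H^s(F) = 0$, whence $\h(F) \leq \cinco(F)$ from the $\inf$-characterizations of both dimensions.

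For the reverse inequality $\h(F) \geq \cinco(F)$, I would directly invoke Proposition~\ref{prop:k-cond->H=cinco}, which tells us that under the current hypotheses on $\ef$ (finitely splitting, $\kappa$-condition, $\dm(\Gamma_n) \to 0$), $\mathcal{H}_H^s(F) = 0$ forces $\mathcal{H}_5^s(F) = 0$. Thus $\{s \geq 0 : \mathcal{H}_H^s(F) = 0\} \subseteq \{s \geq 0 : \mathcal{H}_5^s(F) = 0\}$, and taking infima gives $\cinco(F) \leq \h(F)$.

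Combining both inequalities yields $\h(F) = \cinco(F)$, as desired. The real obstacle here is Proposition~\ref{prop:k-cond->H=cinco} itself, already dispatched in the excerpt, whose proof uses the $\kappa$-condition and finitely splitting property to upgrade an arbitrary $\delta$-cover of $F$ (coming from a near-optimal choice for $\mathcal{H}_H^s$) into a cover by elements of levels of $\ef$ with controlled total $s$-weight. The present theorem is essentially a packaging of that technical lemma together with the trivial containment of cover classes.
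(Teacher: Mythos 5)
Your proposal is correct and follows essentially the same route as the paper's own proof: the inequality $\h(F)\leq\cinco(F)$ comes from observing that every covering in $\mathcal{A}_{n,5}(F)$ is a $\dm(\Gamma_n)$-cover (with $\dm(\Gamma_n)\to 0$), and the reverse inequality is exactly the content of Proposition~\ref{prop:k-cond->H=cinco}. Your write-up is in fact slightly more explicit than the paper's about why the first containment of cover classes yields $\mathcal{H}_{\rm H}^s(F)\leq\mathcal{H}_5^s(F)$, but the argument is the same.
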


\begin{proof}
First, it is clear that $\h(F)\leq \cinco(F)$ since $\mathcal{A}_{n,5}(F)\subseteq \mathcal{C}_{\delta}(F)$ for all $F\subseteq X$ and $n\in \N$. In fact, each covering in the family $\mathcal{A}_{n,5}(F)$ becomes a $\delta-$cover for an appropriate $\delta>0$. Conversely, let $s\geq 0$. Since $\ef$ is finitely splitting and lies under the $\kappa-$condition, then $\mathcal{H}_{\rm H}^s(F)=0$ implies $\mathcal{H}_5^s(F)=0$ for all subset $F$ of $X$ (c.f. Proposition~\ref{prop:k-cond->H=cinco}). Thus, $s\geq \cinco(F)$ for all $s\geq \h(F)$ and in particular, $\h(F)\geq \cinco(F)$.
\end{proof}

\begin{theorem}\label{teo:H-to-R}
Assume that both fractal structures $\ef$ and $\gf$ are finitely splitting and lie under the $\kappa-$condition with $\dm(\Gamma_n)\to 0$. Under Main hypotheses \ref{hypo:main}, it holds that
$\h(F)=d\cdot \h(\alpha^{-1}(F))$.
\end{theorem}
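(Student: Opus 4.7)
The plan is to chain together two results already established in the paper: Theorem \ref{teo:k-cond->H=cinco}, which identifies Hausdorff dimension with fractal dimension V on both sides, and Theorem \ref{teo:dim5-to-R}, which transfers fractal dimension V across the map $\alpha$ with a factor of $d$.

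First, I would verify that the hypotheses of Theorem \ref{teo:k-cond->H=cinco} are in force on each side. On $X$ with the fractal structure $\ef$, the hypotheses are granted directly: $\ef$ is finitely splitting, satisfies the $\kappa$-condition, and $\dm(\Gamma_n)\to 0$. On $Y$ with $\gf$, the finitely splitting and $\kappa$-condition assumptions are also granted; what must be checked is $\dm(\Delta_n)\to 0$. This follows immediately from Main hypotheses \ref{hypo:main}: for every $A\in\Gamma_n$ one has $\dm(\alpha(A))=(c\cdot\dm(A))^{1/d}$, so
\[
\dm(\Delta_n)=\sup\{\dm(\alpha(A)):A\in\Gamma_n\}=(c\cdot\dm(\Gamma_n))^{1/d},
\]
and this tends to $0$ with $\dm(\Gamma_n)$.

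Next, I would apply Theorem \ref{teo:k-cond->H=cinco} to $(Y,\gf)$ and the subset $F\subseteq Y$ to obtain $\h(F)=\dim_{\gf}^5(F)$, and to $(X,\ef)$ and $\alpha^{-1}(F)$ to obtain $\h(\alpha^{-1}(F))=\dim_{\ef}^5(\alpha^{-1}(F))$. At this point Theorem \ref{teo:dim5-to-R}, whose hypotheses (Main hypotheses \ref{hypo:main} together with $\gf$ being finitely splitting and satisfying the $\kappa$-condition) are exactly the ones assumed here, yields $\dim_{\gf}^5(F)=d\cdot \dim_{\ef}^5(\alpha^{-1}(F))$. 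Combining the three identities gives $\h(F)=d\cdot\h(\alpha^{-1}(F))$, which is the desired conclusion.

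There is no real obstacle: the two prior theorems were proved precisely so that this final statement would be a short assembly. The only subtlety worth writing out explicitly is the check that $\dm(\Delta_n)\to 0$, needed to apply Theorem \ref{teo:k-cond->H=cinco} on the $Y$ side (implicitly using that $c>0$ and $d>0$, as is forced by the relation $\dm(\alpha(A))^d=c\cdot\dm(A)$ with both diameters non-negative and the requirement that $\dm(\alpha(A))\to 0$ when $\dm(A)\to 0$); this is already the content of the forward implication of Remark \ref{obs:dms} applied to $F=Y$, and could alternatively be quoted from there.
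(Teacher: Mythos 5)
Your proposal is correct and follows essentially the same route as the paper: the paper's proof is exactly the chain $\h(F)=\dim_{\gf}^5(F)=d\cdot\dim_{\ef}^5(\alpha^{-1}(F))=d\cdot\h(\alpha^{-1}(F))$ via Theorems \ref{teo:k-cond->H=cinco} and \ref{teo:dim5-to-R}. Your explicit verification that $\dm(\Delta_n)\to 0$ (needed to invoke Theorem \ref{teo:k-cond->H=cinco} on the $Y$ side) is a detail the paper leaves implicit, and it is a worthwhile addition.
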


\begin{proof}
The following chain of identities holds:
\[
\h(F)=\dim_{\gf}^5(F)=d\cdot \dim_{\ef}^5(\alpha^{-1}(F))=d\cdot \h(\alpha^{-1}(F)),
\]
where both the first and the last equalities stand by Theorem \ref{teo:k-cond->H=cinco} and the second identity is due to Theorem \ref{teo:dim5-to-R}.
\end{proof}
It is worth mentioning that Theorem \ref{teo:H-to-R} could be also proved for compact subsets in terms of fractal dimension IV. In fact, it is clear that both Proposition \ref{prop:k-cond->H=cinco} and Theorem \ref{teo:k-cond->H=cinco} also stand regarding the fractal dimension IV of each compact subset. Next, we highlight that last result.

\begin{theorem}\label{teo:k-cond->H=cuatro}
Let $\ef$ be a finitely splitting fractal structure on $X$ satisfying the $\kappa-$condition with $\dm(\Gamma_n)\to 0$.
Then $\h(F)=\cuatro(F)$ for all compact subset $F$ of $X$.
\end{theorem}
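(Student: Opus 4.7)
The plan is to argue exactly as in Proposition \ref{prop:k-cond->H=cinco} and Theorem \ref{teo:k-cond->H=cinco}, but exploit the compactness of $F$ to replace the countable coverings appearing there by finite ones, thereby passing from fractal dimension V to fractal dimension IV.

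The inequality $\cuatro(F)\geq \cinco(F)$ is free: the inclusion $\mathcal{A}_{n,4}(F)\subseteq \mathcal{A}_{n,5}(F)$ gives $\mathcal{H}_{n,4}^s(F)\geq \mathcal{H}_{n,5}^s(F)$ at every stage, and hence $\mathcal{H}_4^s(F)\geq \mathcal{H}_5^s(F)$. Combined with Theorem \ref{teo:k-cond->H=cinco}, this already yields $\cuatro(F)\geq \cinco(F)=\h(F)$.

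For the reverse inequality, I would adapt the argument of Proposition \ref{prop:k-cond->H=cinco} to show that $\mathcal{H}_{\rm H}^s(F)=0$ implies $\mathcal{H}_4^s(F)=0$. Given $\eps>0$, choose $n_0$ with $\dm(\Gamma_{n_0})<\delta_0$ and pick a countable $\delta_0$-cover $\{B_i\}_{i\in I}$ of $F$ whose total $s$-content is smaller than a suitable multiple of $\eps/(\kappa\kappa')$. After enlarging each $B_i$ slightly to an open set $B_i'$ (for instance by a $\mu 2^{-i}$-neighborhood for small $\mu$), which costs at most an arbitrarily small amount in the total $s$-content, the compactness of $F$ allows me to extract a \emph{finite} subcover $B_{i_1}',\ldots,B_{i_N}'$. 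Now I would run the star-construction on this finite family exactly as in Proposition \ref{prop:k-cond->H=cinco}: choose $n_{i_k}$ so that $\dm(\Gamma_{n_{i_k}})\leq \dm(B_{i_k}')<\dm(\Gamma_{n_{i_k}-1})$ and set $\mathcal{A}_{i_k}=\{A\in \Gamma_{n_{i_k}}:A\cap B_{i_k}'\neq \emptyset\}$. The $\kappa$-condition together with the finitely splitting property bounds $\cd(\mathcal{A}_{i_k})$ by $\kappa\kappa'$, so $\mathcal{A}=\bigcup_{k=1}^{N}\mathcal{A}_{i_k}$ is \emph{finite}. The standard verifications then give that $\mathcal{A}$ covers $F$, its members all belong to levels $\geq n_0$, and $\sum_{A\in\mathcal{A}}\dm(A)^s<\eps$. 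Consequently $\mathcal{H}_{n,4}^s(F)<\eps$ for every $n\geq n_0$, so $\mathcal{H}_4^s(F)=0$, and therefore $\cuatro(F)\leq \h(F)$.

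The only new difficulty, compared with the proof of Theorem \ref{teo:k-cond->H=cinco}, is the passage from a countable $\delta$-cover to a finite one. This is where compactness of $F$ is decisive, and the mild technicality is the controlled open-enlargement step required before invoking a Heine--Borel reduction. Everything else is a word-for-word transcription of Proposition \ref{prop:k-cond->H=cinco}, since the star-construction preserves finiteness of the input covering up to the multiplicative factor $\kappa\kappa'$.
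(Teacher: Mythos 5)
Your proposal is correct and follows exactly the route the paper intends: the paper offers no explicit proof of this theorem, merely asserting that Proposition \ref{prop:k-cond->H=cinco} and Theorem \ref{teo:k-cond->H=cinco} ``also stand'' for fractal dimension IV on compact sets. You in fact supply the one genuinely new ingredient the paper glosses over --- the controlled open enlargement of the $\delta$-cover followed by a Heine--Borel extraction of a finite subcover, after which the star-construction with the $\kappa$-condition and the finitely splitting property goes through verbatim and preserves finiteness --- so your write-up is, if anything, more complete than the paper's.
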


\section{Results in the Euclidean setting}\label{sec:5}

Along this section, we shall pose more operational versions for both Theorems \ref{teo:bc-to-R} and \ref{teo:H-to-R} in the Euclidean setting to tackle with applications of fractal dimension in higher dimensional spaces. The proof regarding the next theorem follows immediately by applying those results.
\begin{theorem}\label{teo:eu-dim-to-R}
Let $\alpha:X\to Y$ be a function between a pair of GF-spaces, $(X,\ef)$ and $(Y,\gf)$, where $X=[0,1]$ and $Y=[0,1]^d$, with $\gf=\alpha(\ef)$. Assume that both fractal structures $\ef$ and $\gf$ lie under the $\kappa-$condition and suppose that there exist real numbers $c\neq 0$ and $d$ for which the next identity stands for all $A\in \Gamma_n$ and all $n\in \N$ (c.f.~Main hypotheses \ref{hypo:main}):
\[
\dm(\alpha(A))^d=c\cdot \dm(A).
\]
Suppose also that  $\dm(\Gamma_n)\to 0$. The two following hold for all $F\subseteq [0,1]^d$:
\begin{enumerate}[(i)]
\item 
\[
\bc(F)=d\cdot \bc(\alpha^{-1}(F)).
\]\label{teo:bc-to-r}
\item In addition, if both $\ef$ and $\gf$ are finitely splitting, then 
\[
\h(F)=d\cdot \h(\alpha^{-1}(F)).
\]
\end{enumerate}
\end{theorem}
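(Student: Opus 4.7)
The plan is to derive both parts directly from Theorems~\ref{teo:bc-to-R} and \ref{teo:H-to-R}, treating the present statement as a specialization to the Euclidean case $X=[0,1]$, $Y=[0,1]^d$. Concretely, I will check that the hypotheses of those two more general results are all satisfied under the assumptions stated here, so that parts (i) and (ii) follow as immediate corollaries. Note that the $\kappa$-condition for both $\ef$ and $\gf$ and Main hypotheses~\ref{hypo:main} are assumed directly, so the only nontrivial bookkeeping is to transfer the diameter decay assumption $\dm(\Gamma_n)\to 0$ on $X$ to the corresponding decay on the target GF-space $(Y,\gf)$.

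First I would record the elementary observation that $\dm(\Delta_n)\to 0$. Since $\gf=\alpha(\ef)$, every $B\in\Delta_n$ has the form $B=\alpha(A)$ for some $A\in\Gamma_n$, and Main hypotheses~\ref{hypo:main} give
\[
\dm(\alpha(A))^d=c\cdot \dm(A)\leq c\cdot \dm(\Gamma_n).
\]
Taking the supremum over $B\in\Delta_n$ yields $\dm(\Delta_n)^d\leq c\cdot \dm(\Gamma_n)$, so $\dm(\Delta_n)\to 0$. In particular, $\dm(F,\Delta_n)\leq \dm(\Delta_n)\to 0$ for every $F\subseteq [0,1]^d$. Equivalently, one could invoke Remark~\ref{obs:dms} once $\dm(\alpha^{-1}(F),\Gamma_n)\to 0$ is noted (also immediate from $\dm(\Gamma_n)\to 0$), but the direct inequality above is cleaner.

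For part (i), I would then apply Theorem~\ref{teo:bc-to-R}: its hypotheses are precisely Main hypotheses~\ref{hypo:main}, the $\kappa$-condition for both $\ef$ and $\gf$, and $\dm(F,\Delta_n)\to 0$, all of which we have verified. The conclusion $\bc(F)=d\cdot \bc(\alpha^{-1}(F))$ follows, together with the equivalence of existence of the two box dimensions. For part (ii), the additional assumption that $\ef$ and $\gf$ are finitely splitting, combined with the $\kappa$-condition and the diameter decay on both sides (again using $\dm(\Delta_n)\to 0$ derived above), is exactly what Theorem~\ref{teo:H-to-R} requires under Main hypotheses~\ref{hypo:main}; hence $\h(F)=d\cdot \h(\alpha^{-1}(F))$.

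There is essentially no obstacle here beyond the diameter transfer step; the theorem is an organizational restatement that packages the abstract hypotheses on $(X,\ef)$ and $(Y,\gf)$ into concrete hypotheses that can be verified in the Euclidean setting. The one thing worth emphasizing in the write-up is that $\dm(\Gamma_n)\to 0$ alone (not $\dm(\Delta_n)\to 0$) needs to be assumed, because Main hypotheses~\ref{hypo:main} automatically propagate the decay from $\ef$ to $\gf$ via the scaling identity $\dm(\alpha(A))^d=c\cdot \dm(A)$.
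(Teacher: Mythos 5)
Your proposal is correct and follows essentially the same route as the paper, which simply states that the theorem follows immediately by applying Theorems~\ref{teo:bc-to-R} and \ref{teo:H-to-R}. Your extra step deriving $\dm(\Delta_n)\to 0$ from $\dm(\Gamma_n)\to 0$ via the scaling identity is a worthwhile detail the paper leaves implicit, and it is carried out correctly.
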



\begin{remark}
As a consequence from Theorem \ref{teo:eu-dim-to-R} (i), the (lower/upper) box dimension of $F\subseteq [0,1]^d$ can be calculated throughout the following (lower/upper) limit:
\[
\bc(F)=d\cdot \lim_{\delta\to 0}\frac{\log \mathcal{N}_{\delta}(\alpha^{-1}(F))}{-\log \delta},
\]
where $\mathcal{N}_{\delta}(\alpha^{-1}(F))$ can be calculated as the number of $\delta=2^{-n}-$cubes that intersect $\alpha^{-1}(F)$ (among other equivalent quantities, c.f. \cite[Equivalent definitions 2.1]{FAL14}).
\end{remark}

The next remark highlights why it could be assumed, without loss of generality, that $F$ is contained in $[0,1]^d$ for box/Hausdorff dimension calculation purposes.
\begin{remark}
Let $F$ be a bounded subset of $\R^d$.
Since the box/Hausdorff dimension is invariant by bi-Lipschitz transformations (c.f.~\cite[Corollary 2.4 (b)/Section 3.2]{FAL14}), an appropriate similarity $f:\R^d\to \R^d$ may be applied to $F$ so that $f(F)\subseteq [0,1]^d$ with $\dim(F)=\dim(f(F))$, where $\dim$ refers to box/Hausdorff dimension.
\end{remark}

Interestingly, it holds that a natural choice for both fractal structures $\ef$ and $\gf$ may be carried out so that they satisfy both the $\kappa-$condition and the finitely splitting property. As such, Theorem \ref{teo:eu-dim-to-R} can be applied to calculate the box/Hausdorff dimension of a subset $F$ of $[0,1]^d$. 
\begin{remark}\label{obs:eu-fdims}
Notice that Theorem \ref{teo:eu-dim-to-R} can be applied in the setting of both GF-spaces $(Y=[0,1]^d,\gf)$ and $(X=[0,1],\ef)$, where $\gf$ can be chosen to be the natural fractal structure on $[0,1]^d$, i.e., $\gf=\{\Delta_n:n\in \N\}$ with levels given by $\Delta_n=\{[\tfrac{k_1}{2^n},\tfrac{1+k_1}{2^n}]\times\ldots\times [\tfrac{k_d}{2^n},\tfrac{1+k_d}{2^n}]:k_1,\ldots,k_d=0,1,\ldots,2^n-1\}$ and $\ef=\{\Gamma_n:n\in \N\}$ with $\Gamma_n=\{[\tfrac{k}{2^{nd}},\tfrac{1+k}{2^{nd}}]:k=0,1,\ldots,2^{nd}-1\}$ for all $n\in \N$. Thus, $\gf$ satisfies both the $\kappa-$condition for $\kappa=3^d$ and the finitely splitting property for $\kappa'=2^d$. In addition, it holds that $\ef$ also lies under both the $\kappa-$condition (for $\kappa=3$) and the finitely splitting property (for $\kappa'=2^d$), as well. Observe that level $n$ of each fractal structure contains $2^{nd}$ elements. Regarding Main hypotheses \ref{hypo:main}, it is worth noting that for such fractal structures there exist $d$ and $c\neq 0$ such that $\dm(\alpha(A))^d=c\cdot \dm(A)$ for all $A\in \Gamma_n$ and all $n\in \N$. In fact, just observe that $\dm(A)=2^{-nd}$ for each $A\in \Gamma_n$. In addition, it holds that $\dm(\alpha(A))=2^{-n}\, \sqrt{d}$. Thus, for $k=d^{\frac d2}$, where $d$ is the embedding dimension, we have $(\frac{\sqrt{d}}{2^n})^d=k\cdot (\frac{1}{2^d})^n$ for all $n\in \N$.  

Following the constructive approach theoretically described in upcoming Theorem \ref{teo:curves}, a function $\alpha:X\to Y$ can be constructed 
so that $\gf=\alpha(\ef)$, and hence, it holds that $\dim(F)=d\cdot \dim(\alpha^{-1}(F))$ for all $F\subseteq Y$, where $\dim$ refers to box/Hausdorff dimension.
\end{remark}

\section{How to construct $\alpha$}\label{sec:how-to-construct}\label{sec:6}

Along this paper, we have been focused on calculating the fractal dimension of a subset $F\subseteq Y$ in terms of the fractal dimension of its pre-image $\alpha^{-1}(F)\subseteq X$ via a function $\alpha:X\to Y$ with $\gf=\alpha(\ef)$ (c.f. Theorems \ref{teo:dim1-to-R}, \ref{teo:dim2-to-R}, \ref{teo:dim3-to-R}, \ref{teo:dim5-to-R}, \ref{teo:dim4-to-R}, \ref{teo:bc-to-R}, \ref{teo:H-to-R}, and \ref{teo:eu-dim-to-R}). In this section, we state a powerful result (c.f. Theorem \ref{teo:curves}) allowing the explicit construction of such a function. To deal with, first let us recall the concepts of Cantor complete fractal structure and starbase fractal structure, as well.

First, it is worth mentioning that a sequence $\{A_n:n\in \N\}$ is decreasing provided that $A_{n+1}\subseteq A_n$ for all $n\in \N$. 
\begin{definition}[\cite{completeGF}, Definition 3.1.1]\label{def:Cantor-complete}
Let $\ef=\{\Gamma_n:n\in \N\}$ be a fractal structure on $X$. We shall understand that $\ef$ is Cantor complete if for each decreasing sequence $\{A_n:n\in \N\}$ with $A_n\in \Gamma_n$, it holds that $\cap_{n\in \N}A_n\neq \emptyset$.
\end{definition}

The concept of a starbase fractal structure also plays a key role to deal with the construction of such a function $\alpha$.
\begin{definition}[\cite{directedGFspaces}, Section 2.2]
Let $\ef$ be a fractal structure on $X$. We say that $\ef$ is starbase if $\St(x,\ef)=\{\St(x,\Gamma_n):n\in \N\}$ is a neighborhood base of $x$ for all $x\in X$.
\end{definition}

The main result in this section is stated next.
\begin{theorem}[\cite{curves}, Theorem 3.6]\label{teo:curves}
Let $\ef=\{\Gamma_n:n\in \N\}$ be a starbase fractal structure on a metric space $X$ and $\gf=\{\Delta_n:n\in \N\}$ be a Cantor complete starbase fractal structure on a complete metric space $Y$. Moreover, let $\{\alpha_n:n\in \N\}$ be a family of functions, where each $\alpha_n:\Gamma_n\to \Delta_n$ satisfies the two following:
\begin{enumerate}
\item [(i)] if $A\cap B\neq \emptyset$ with $A,B\in \Gamma_n$ for some $n\in \N$, then $\alpha_n(A)\cap \alpha_n(B)\neq \emptyset$.\label{cond:i}
\item [(ii)] If $A\subseteq B$ with $A\in \Gamma_{n+1}$ and $B\in \Gamma_n$ for some $n\in \N$, then $\alpha_{n+1}(A)\subseteq \alpha_n(B)$.\label{cond:ii}
\end{enumerate}
Then there exists a unique continuous function $\alpha:X\to Y$ such that $\alpha(A)\subseteq \alpha_n(A)$ for all $A\in \Gamma_n$ and all $n\in \N$. Additionally, if $\ef$ is Cantor complete and each $\alpha_n$ also satisfies the two following:
\begin{enumerate}
\item [(iii)] $\alpha_n$ is onto.\label{cond:iii}
\item [(iv)] $\alpha_n(A)=\cup\{\alpha_{n+1}(B):B\in \Gamma_{n+1},B\subseteq A\}$ for all $A\in \Gamma_n$,\label{cond:iv}
\end{enumerate}
then $\alpha$ is onto and $\alpha(A)=\alpha_n(A)$ for all $A\in \Gamma_n$ and all $n\in \N$.
\end{theorem}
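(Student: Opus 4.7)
The plan is to define $\alpha$ pointwise as the unique element in the intersection of the nested images $\alpha_n(A_n)$ that track $x$ through the levels of $\ef$, and then to exploit the starbase and Cantor-completeness hypotheses to verify continuity, uniqueness, and (under the additional conditions) surjectivity together with $\alpha(A)=\alpha_n(A)$. The main obstacle I anticipate is collapsing the intersections $\bigcap_n\alpha_n(A_n)$ and, in the surjectivity step, $\bigcap_n A_n$ down to singletons; this requires combining Cantor-completeness with the starbase property on both sides and is the engine driving well-definedness, uniqueness, and surjectivity.

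For the construction, fix $x\in X$ and choose a chain $A_1\supseteq A_2\supseteq\cdots$ with $x\in A_n\in\Gamma_n$ (this exists because $\Gamma_{n+1}\prec\prec\Gamma_n$). Hypothesis (ii) gives $\alpha_{n+1}(A_{n+1})\subseteq\alpha_n(A_n)$, so the sets $\alpha_n(A_n)\in\Delta_n$ form a decreasing sequence. Cantor-completeness of $\gf$ makes their intersection nonempty, and the starbase property of $\gf$ on the metric space $Y$ forces $\dm(\St(y,\Delta_n))\to 0$ for any $y$ in it; hence the intersection is a singleton, which I set equal to $\alpha(x)$. Independence from the chain uses (i): for any second chain $\{A'_n\}$, both sets contain $x$, so $\alpha_n(A_n)\cap\alpha_n(A'_n)\neq\emptyset$, and picking $y_n$ in these intersections the shrinking-star argument forces $y_n$ to converge to both candidate limit points simultaneously.

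The containment, continuity, and uniqueness then follow quickly. For $A\in\Gamma_m$ and $x\in A$, taking a chain with $A_m=A$ gives $\alpha(x)\in\alpha_m(A)$, hence $\alpha(A)\subseteq\alpha_m(A)$. Continuity at $x$ is verified by choosing $n$ with $\St(\alpha(x),\Delta_n)\subseteq V$ for a given neighborhood $V$ of $\alpha(x)$: any $y\in\St(x,\Gamma_n)$ lies together with $x$ in some $A\in\Gamma_n$, so $\alpha(y),\alpha(x)\in\alpha_n(A)\subseteq\St(\alpha(x),\Delta_n)\subseteq V$, and $\St(x,\Gamma_n)$ is a neighborhood of $x$ because $\ef$ is starbase. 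Uniqueness is then automatic: any continuous $\beta$ with $\beta(A)\subseteq\alpha_n(A)$ for all $A\in\Gamma_n$ satisfies $\beta(x)\in\bigcap_n\alpha_n(A_n)=\{\alpha(x)\}$.

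Finally, under (iii), (iv) and Cantor-completeness of $\ef$, I would prove surjectivity and $\alpha(A)=\alpha_n(A)$ by a dual descending-chain argument on the $Y$ side. Given $y\in Y$, the covering $\Delta_1$ contains some element through $y$, and (iii) yields $A_1\in\Gamma_1$ with $y\in\alpha_1(A_1)$; (iv) then inductively supplies $A_{n+1}\in\Gamma_{n+1}$ with $A_{n+1}\subseteq A_n$ and $y\in\alpha_{n+1}(A_{n+1})$. Cantor-completeness of $\ef$ furnishes $x\in\bigcap_n A_n$, and since $y$ and $\alpha(x)$ both belong to the singleton intersection $\bigcap_n\alpha_n(A_n)$, they agree. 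Running the same construction inside a fixed $A\in\Gamma_n$ starting from an arbitrary $y\in\alpha_n(A)$ (take $A_n=A$ and iterate (iv)) yields $x\in A$ with $\alpha(x)=y$, giving the reverse inclusion $\alpha_n(A)\subseteq\alpha(A)$.
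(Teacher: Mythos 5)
Your proposal is correct and follows essentially the same route as the paper, which cites the result from its reference and sketches precisely this construction: define $\alpha(x)$ as the unique point of $\bigcap_n\alpha_n(A_n)$ for a descending chain $\{A_n\}$ through $x$, using Cantor completeness for nonemptiness and the starbase property (which in a metric space forces $\dm(\St(y,\Delta_n))\to 0$) to collapse the intersection to a singleton. Your elaborations of chain-independence via (i), continuity via shrinking stars, and the dual descending-chain argument for surjectivity and $\alpha(A)=\alpha_n(A)$ all check out.
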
 
To properly construct a function $\alpha:X\to Y$ according to Theorem \ref{teo:curves}, we can proceed as follows. First, for each $x\in X$, there exists a decreasing sequence $\{A_n:n\in \N\}$  such that $A_n\in \Gamma_n$ for all $n\in \N$ with $x\in \cap_{n\in \N}A_n$. Thus, $\{\alpha_n(A_n):n\in \N\}$ is also decreasing with $\alpha_n(A_n)\in \Delta_n$ for all $n\in \N$. Further, it holds that $\cap_n \alpha_n(A_n)$ is a single point since $\gf$ is starbase and Cantor complete. Therefore, we shall define $f(x)=\cap_{n\in \N}\alpha_n(A_n)$.

Next, we illustrate how Theorem \ref{teo:curves} allows the construction of functions for Theorem \ref{teo:eu-dim-to-R} application purposes. In this way, let us show how the classical Hilbert's square-filling curve can be iteratively described by levels.
\begin{example}[c.f.~Example 1 in \cite{curves}]\label{example:hilbert}
Let $(Y,\gf)$ be a GF-space with $Y=[0,1]\times [0,1]$ and $\gf$ being the natural fractal structure on $[0,1]\times [0,1]$ as a Euclidean subset, i.e., $\gf=\{\Delta_n:n\in \N\}$, where $\Delta_n=\left\{[\frac{k_1}{2^n},\frac{1+k_1}{2^n}]\times [\frac{k_2}{2^n},\frac{1+k_2}{2^n}]:k_1,k_2=0,1,\ldots,2^n-1\right\}$ for each $n\in \N$. In addition, let $(X,\ef)$ be another GF-space where $X=[0,1]$ and $\ef=\{\Gamma_n:n\in \N\}$ with $\Gamma_n=\left\{[\frac{k}{2^{2n}},\frac{1+k}{2^{2n}}]:k=0,1,\ldots,2^{2n}-1\right\}$. It is worth pointing out that each level $n$ of $\ef$ (resp., of $\gf$) contains $2^{2n}$ elements. Next, we explain how to construct a function $\alpha:X\to Y$ such that $\gf=\alpha(\ef)$. To deal with, we shall define the image of each element in level $n$ of $\ef$ through a function $\alpha_n:\Gamma_n\to \Delta_n$. For instance, let $\alpha([0,\tfrac 14])=[0,\tfrac 12]^2, \alpha([\tfrac 14,\tfrac 12])=[\tfrac 12,1]\times [0,\tfrac 12], \alpha([\tfrac 12,\tfrac 34])=[\tfrac 12,1]^2$, and $\alpha([\tfrac 34,1])=[0,\tfrac 12]\times [\tfrac 12,1]$, as well (c.f. Fig.~\ref{fig:hilbert2}). Thus, the whole level $\Delta_1=\alpha(\Gamma_1)=\{\alpha(A):A\in \Gamma_1\}$ has been defined. It is worth mentioning that this approach provides additional information regarding $\alpha$ as deeper levels of both $\ef$ and $\gf$ are reached via $\alpha_n$ under the two following conditions (c.f.~Theorem~\ref{teo:curves}):
\begin{enumerate}
\item [(i)] if $A\cap B\neq \emptyset$ with $A,B\in \Gamma_n$ for some $n\in \N$, then $\alpha_n(A)\cap \alpha_n(B)\neq \emptyset$.
\item [(ii)] If $A\subseteq B$ with $A\in \Gamma_{n+1}$ and $B\in \Gamma_n$ for some $n\in \N$, then $\alpha_{n+1}(A)\subseteq \alpha_n(B)$.
\end{enumerate}
For instance, if $A\in \Gamma_n$, then we can calculate its image via $\alpha_n$, $\alpha_n(A)$. Going beyond, let $B\in \Gamma_{n+1}$ be so that $B\subseteq A$. Then $\alpha_{n+1}(B)\subseteq \alpha_n(A)$ refines the definition of $\alpha_n(A)$, and so on. This allows us to think of the Hilbert's curve as the limit of the maps $\alpha_n$ (c.f.~Fig.~\ref{fig:hilbert}). This example illustrates how Theorem \ref{teo:curves} allows the construction of (continuous) functions and in particular, space-filling curves.
\begin{figure}
\begin{center}
\begin{tabular}{cc}
\includegraphics[width=4cm,height=5.2cm]{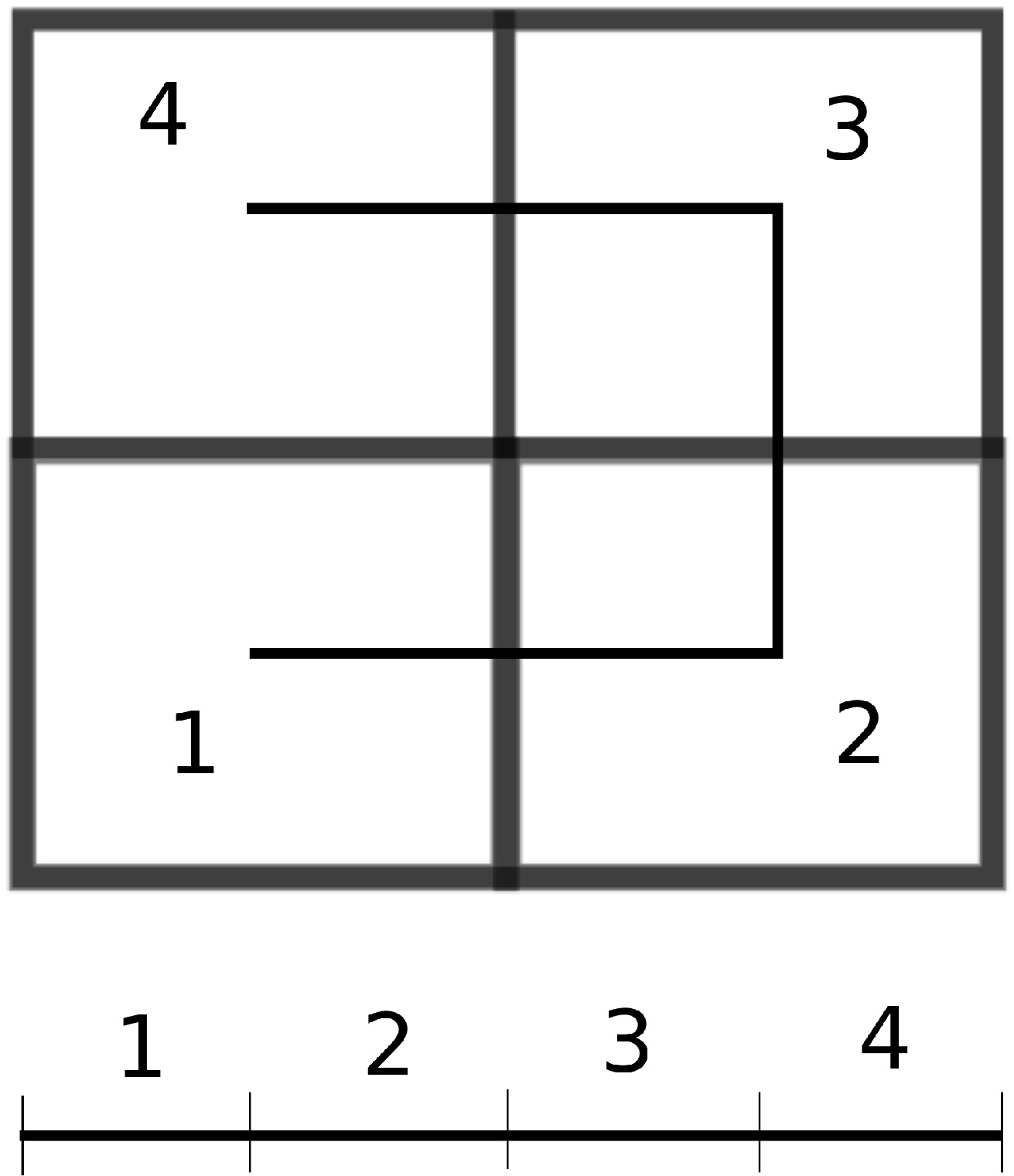} &
\includegraphics[width=4cm,height=7.2cm]{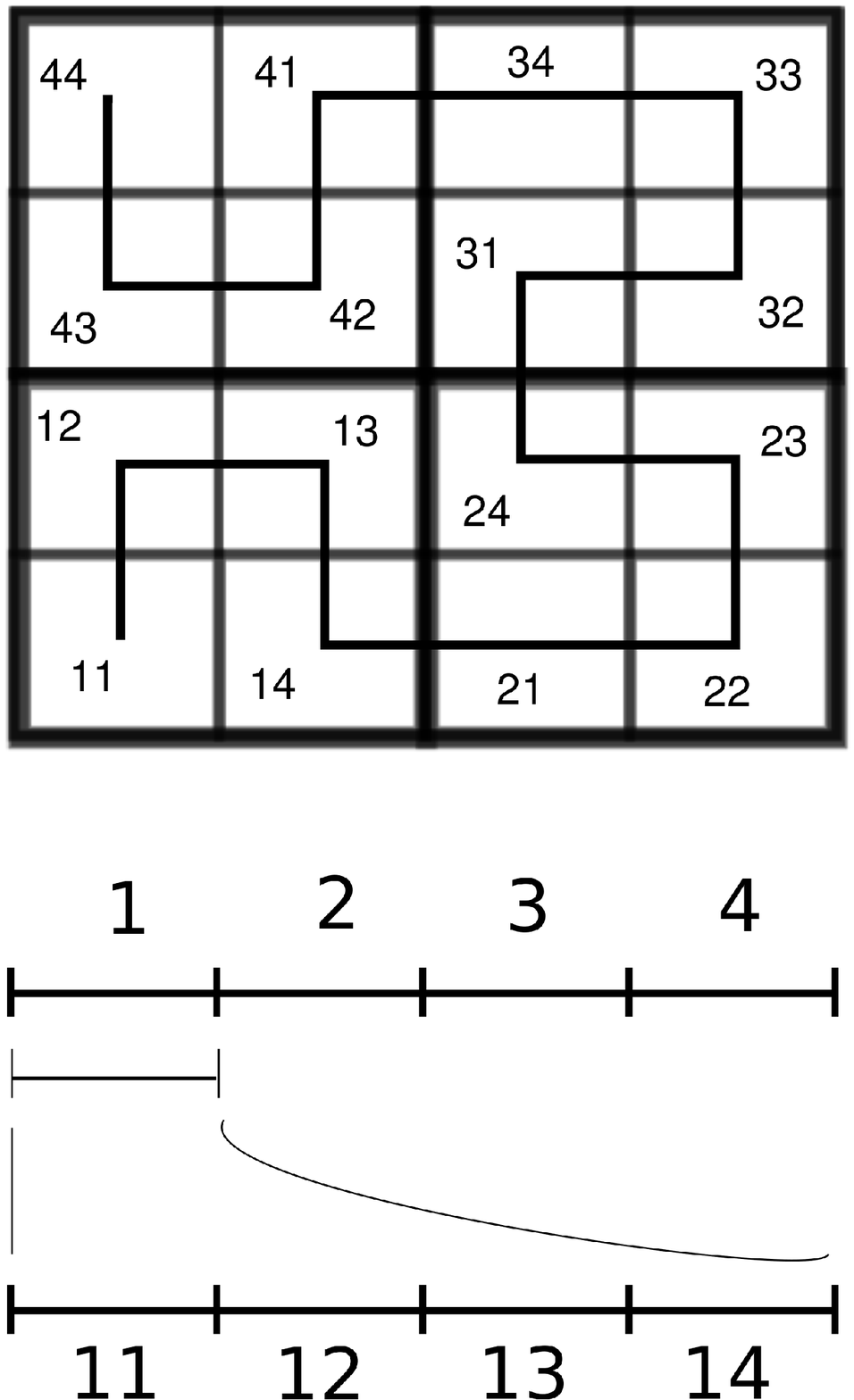}
\end{tabular}
\caption{The two plots above arrange how each element in level $n$ of $\ef$ can be sent to some element in level $n$ of $\gf$ via $\alpha_n$ for $n=2,3$.}\label{fig:hilbert2}
\end{center}
\end{figure}

\begin{figure}
\begin{center}
\begin{tabular}{ccc}
\includegraphics[width=4cm,height=4cm]{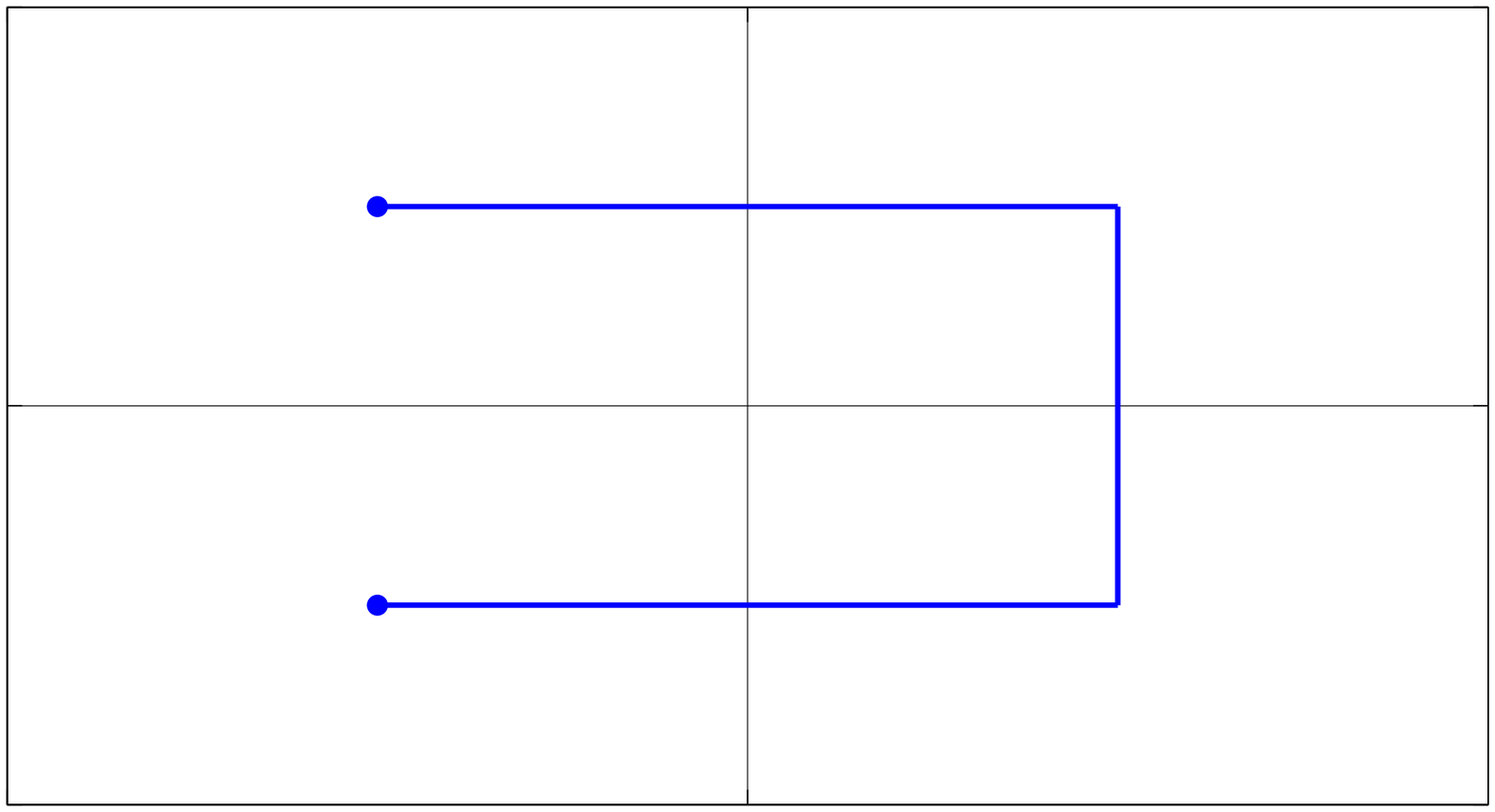} &
\includegraphics[width=4cm,height=4cm]{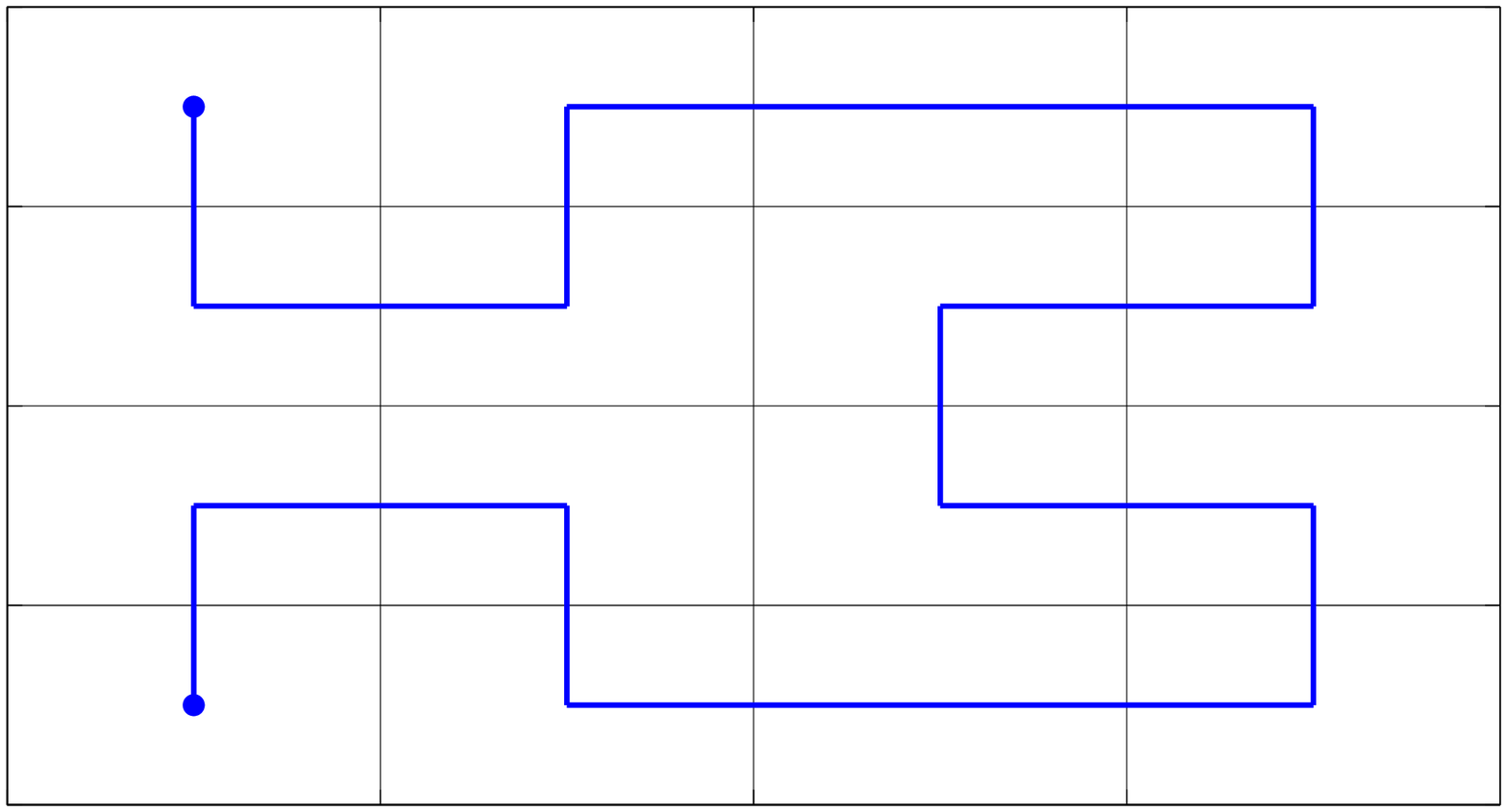} &
\includegraphics[width=4cm,height=4cm]{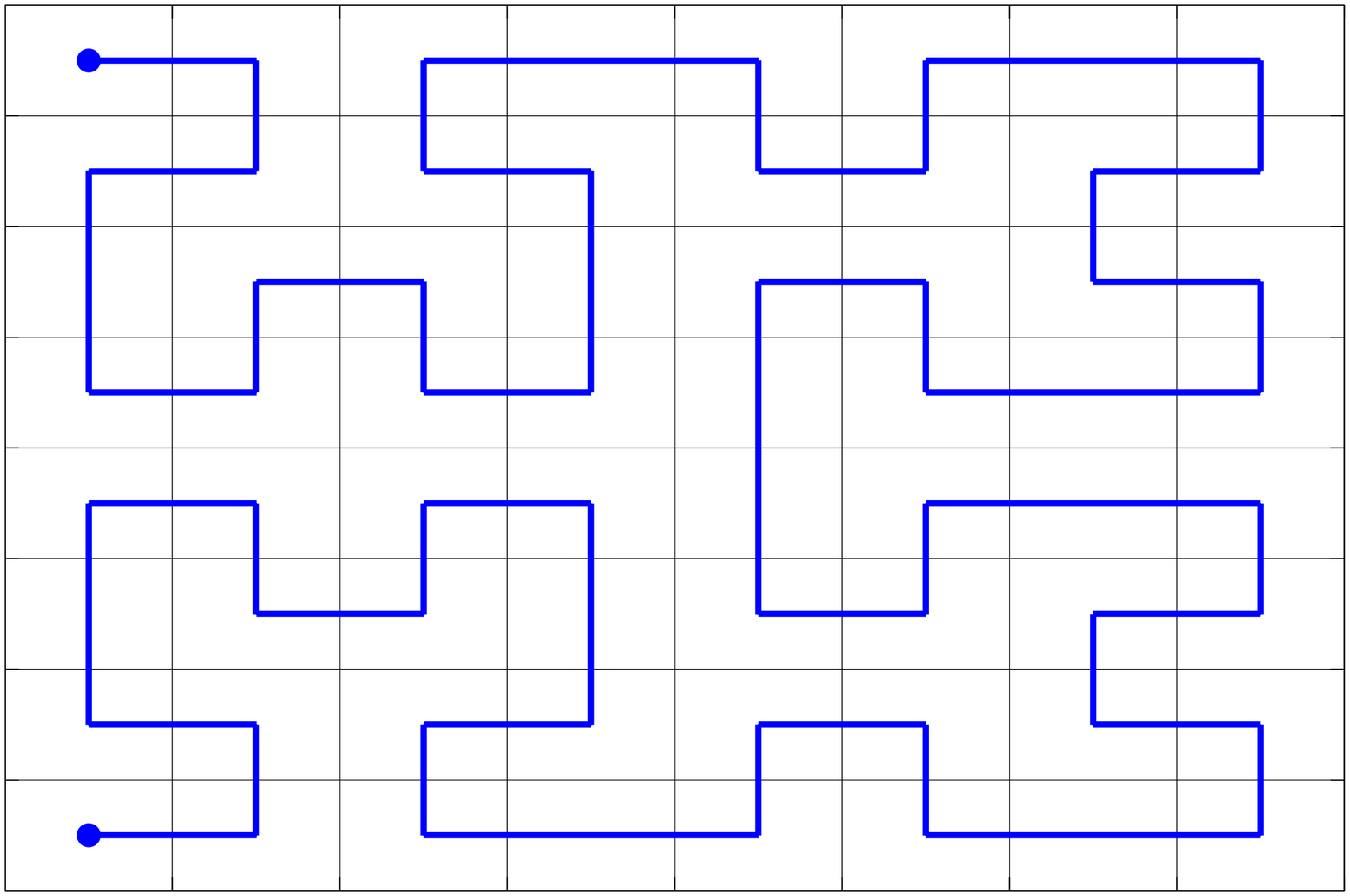}
\end{tabular}
\caption{First three levels in the construction of the Hilbert's curve according to Theorem \ref{teo:curves} (c.f.~\cite[Fig.~2]{curves}).}\label{fig:hilbert}
\end{center}
\end{figure}
\end{example}
It is worth mentioning that Theorem \ref{teo:curves} also allows the construction of maps filling a whole attractor. For instance, in \cite[Example 4]{curves}, we generated a curve crossing once each element of the natural fractal structure which the Sierpi\'nski gasket can be naturally endowed with as a self-similar set (c.f.~Fig.~\ref{fig:sierp-curve}).

\begin{figure}
\begin{center}
\begin{tabular}{cc}
\includegraphics[width=5.7cm,height=4.6cm]{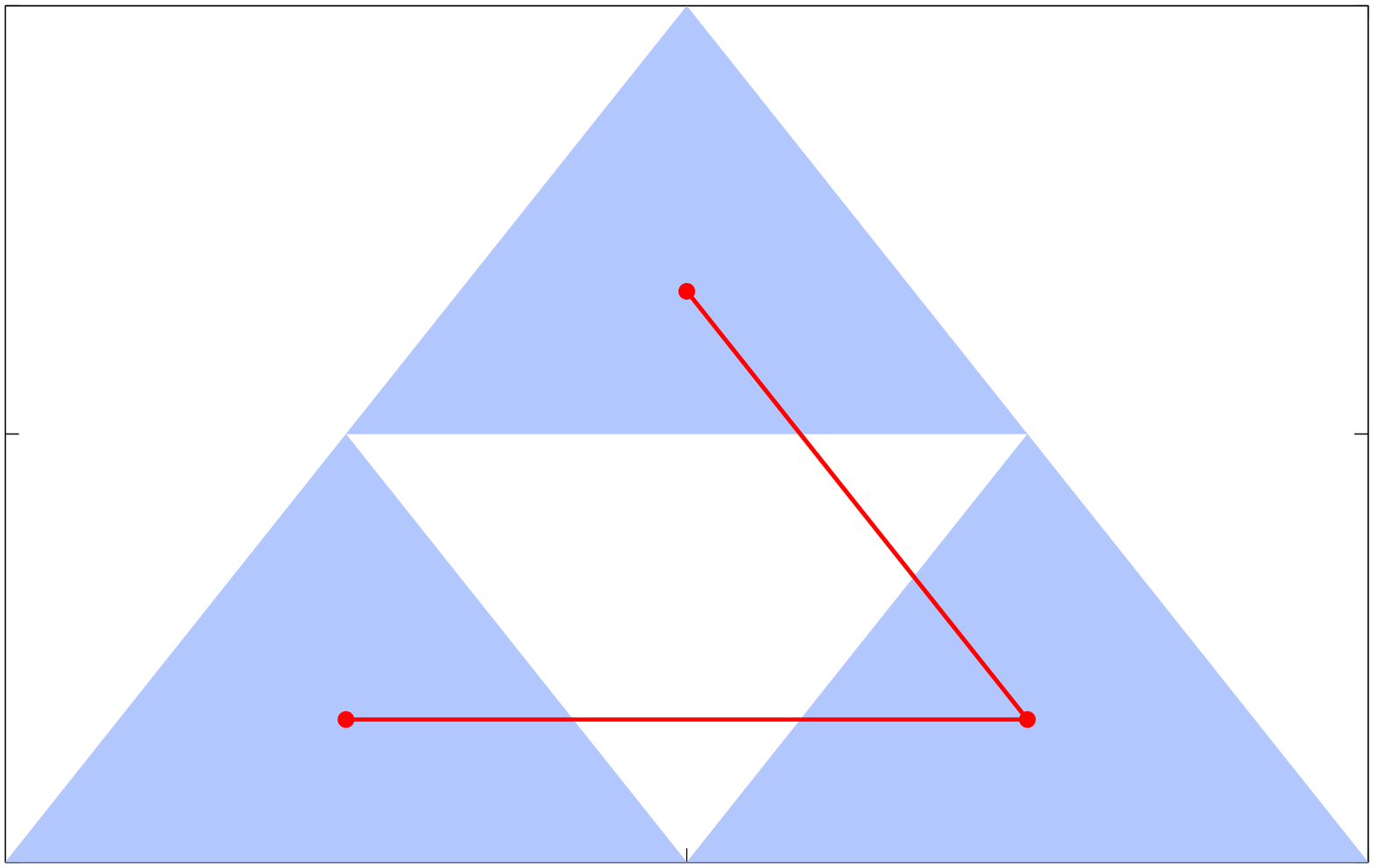} &
\includegraphics[width=5.7cm,height=4.6cm]{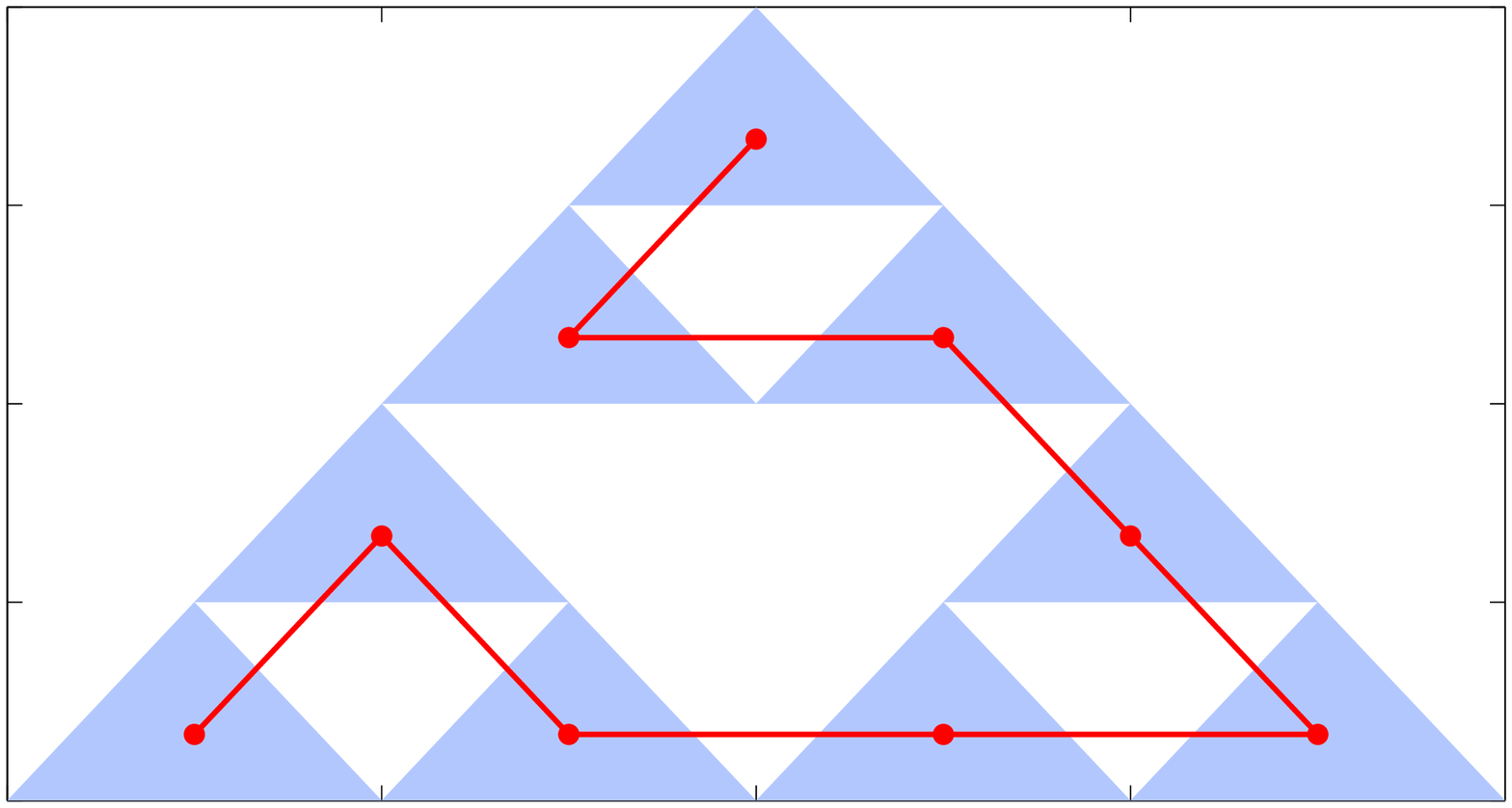}
\end{tabular}
\caption{First two levels in the construction of a curve filling the whole Sierpi\'nski gasket (c.f.~\cite[Fig.~4]{curves}).}\label{fig:sierp-curve}
\end{center}
\end{figure}


\end{document}